\documentclass{amsart}

\usepackage[all, cmtip]{xy}
\usepackage{amsmath,amsfonts,amssymb,amsthm,pict2e,enumerate,chngcntr,caption}
\usepackage[T1]{fontenc}
\usepackage[citecolor=blue, colorlinks=true, linkcolor=blue, urlcolor=blue]{hyperref}

\theoremstyle{definition}
\newtheorem{defn}{Definition}[section]
\newtheorem{assumption}[defn]{Assumption}
\newtheorem{ex}[defn]{Example}
\newtheorem{remark}[defn]{Remark}

\theoremstyle{plain}
\newtheorem{lemma}[defn]{Lemma}
\newtheorem{theorem}[defn]{Theorem}
\newtheorem{proposition}[defn]{Proposition}
\newtheorem{corollary}[defn]{Corollary}

\newcommand{\Z}{\mathbb{Z}}

\newcommand{\C}{\mathbb{C}}

\newcommand{\R}{\mathbb{R}}
\newcommand{\HH}{\mathbb{H}}
\newcommand{\Proj}{\mathbb{P}}
\newcommand{\WP}{\mathbb{W}\mathbb{P}}

\newcommand{\calL}{\mathcal{L}}

\newcommand{\calM}{\mathcal{M}}
\newcommand{\calN}{\mathcal{N}}
\newcommand{\calA}{\mathcal{A}}

\newcommand{\calX}{\mathcal{X}}
\newcommand{\calY}{\mathcal{Y}}
\DeclareMathOperator{\rank}{rank}
\DeclareMathOperator{\Pic}{Pic}

\DeclareMathOperator{\T}{T}

\DeclareMathOperator{\Id}{Id}

\DeclareMathOperator{\Aut}{Aut}

\DeclareMathOperator{\NS}{NS}

\DeclareMathOperator{\SO}{SO}

\counterwithin{table}{section}
\counterwithin{equation}{section}

\begin{document}

\title{Families of lattice polarized K3 surfaces with monodromy}

\author[C. F. Doran]{Charles F. Doran}
\address{Department of Mathematical and Statistical Sciences, 632 CAB, University of Alberta, Edmonton, Alberta T6G 2G1, Canada}
\email{charles.doran@ualberta.ca}
\thanks{C. F. Doran and A. Y. Novoseltsev were supported by the Natural Sciences and Engineering Resource Council of Canada (NSERC), the Pacific Institute for the Mathematical Sciences, and the McCalla Professorship at the University of Alberta}

\author[A. Harder]{Andrew Harder}
\address{
Department of Mathematical and Statistical Sciences, 632 CAB, University of Alberta, Edmonton, Alberta T6G 2G1, Canada}
\email{aharder@ualberta.ca}
\thanks{A. Harder was supported by an NSERC PGS D scholarship and a University of Alberta Doctoral Recruitment Scholarship}

\author[A.Y. Novoseltsev]{Andrey Y. Novoseltsev}
\address{
Department of Mathematical and Statistical Sciences, 632 CAB, University of Alberta, Edmonton, Alberta T6G 2G1, Canada}
\email{novoselt@ualberta.ca}

\author[A. Thompson]{Alan Thompson}
\address{
Fields Institute, 222 College Street, Toronto, Ontario M5T 3J1, Canada}
\email{amthomps@ualberta.ca}
\thanks{A. Thompson was supported in part by NSERC and in part by a Fields Institute Ontario Postdoctoral Fellowship with funding provided by NSERC and the Ontario Ministry of Training, Colleges
and Universities}

\date{\today}

\begin{abstract} We extend the notion of lattice polarization for K3 surfaces to families over a (not necessarily simply connected) base, in a way that gives control over the action of monodromy on the algebraic cycles, and discuss the uses of this new theory in the study of families of K3 surfaces admitting fibrewise symplectic automorphisms. We then give an application of these ideas to the study of Calabi-Yau threefolds admitting fibrations by lattice polarized K3 surfaces.
\end{abstract}

\maketitle

\tableofcontents

\section{Introduction}

The concept of lattice polarization for K3 surfaces was first introduced by Nikulin \cite{fagkk3s} and further developed by Dolgachev \cite{mslpk3s}. Our aim is to extend this theory to families of K3 surfaces over a (not necessarily simply connected) base, in a way that allows control over the action of monodromy on algebraic cycles.

Our interest in this problem arises from the study of Calabi-Yau threefolds with small Hodge numbers. In their paper \cite{doranmorgan}, Doran and Morgan explicitly classify the possible integral variations of Hodge structure that can underlie a family of Calabi-Yau threefolds over the thrice-punctured sphere $\Proj^1 - \{0,1,\infty\}$ with $h^{2,1} = 1$. Explicit examples, coming from toric geometry, of families realising all but one of these variations of Hodge structure were known at the time of publication of \cite{doranmorgan}, and a family realising the fourteenth and final case was recently constructed in \cite{14thcase}.

One of the main tools used to study the Calabi-Yau threefolds constructed in \cite{14thcase} was the existence of a \emph{torically induced} fibration (i.e. a fibration of the threefold induced by a fibration of the toric ambient space by toric subvarieties) of these threefolds by K3 surfaces polarized by the rank $18$ lattice
\[M := H \oplus E_8 \oplus E_8.\]

K3 surfaces polarized by this lattice have been studied by Clingher, Doran, Lewis and Whitcher \cite{milpk3s}\cite{nfk3smmp} and have a rich geometric structure. In particular, the canonical embedding of the lattice $E_8 \oplus E_8$ into $M$ defines a natural Shioda-Inose structure on them, which in turn defines a canonical Nikulin involution \cite{k3slpn}. The resolved quotient by this involution is a new K3 surface, which may be seen to be a Kummer surface associated to a product of two elliptic curves; its geometry is closely related to that of the original K3 surface.

In \cite{14thcase}, toric geometry was used to show that this Nikulin involution is induced on the $M$-polarized K3 fibres by a global involution of the Calabi-Yau threefold. The resolved quotient by this involution is another Calabi-Yau threefold, which is fibred by Kummer surfaces and has geometric properties closely related to the first. Examination of this second Calabi-Yau threefold was instrumental in proving that the construction in \cite{14thcase} realised the ``missing'' fourteenth variation of Hodge structure from the Doran-Morgan list.

Motivated by the discovery of this K3 fibration and the rich geometry that could be derived from it, we decided to search for similar K3 fibrations on the other threefolds from the Doran-Morgan classification. In a large number of cases (summarized by Theorem \ref{thm:Mnfamilies}), we found fibrations by K3 surfaces polarized by the rank $19$ lattices
\[M_n := H \oplus E_8 \oplus E_8 \oplus \langle -2n \rangle,\]
which contain the lattice $M$ as a sublattice. Many, but not all, of these fibrations are torically induced.

This raises two natural questions: Do the canonical Nikulin involutions on the fibres of these K3 fibrations extend to global symplectic involutions on the Calabi-Yau threefolds? And if they do, what can be said about the geometry of the new Calabi-Yau threefolds obtained as resolved quotients by these involutions?

Both of these questions may be addressed by studying the behaviour of the N\'{e}ron-Severi lattice of  a K3 surface as it varies within a family. Furthermore, in order for this theory to be useful in the study of K3 fibred Calabi-Yau threefolds it should be able to cope with the possibility of monodromy around singular fibres, meaning that we must allow for the case where the base of the family is not simply connected.

To initiate this study, we introduce a new definition of lattice polarization for families of K3 surfaces and develop the basic theory surrounding it. We note that a related notion of lattice polarizability for families of K3 surfaces was introduced by Hosono, Lian, Oguiso and Yau \cite{adck3smt}, who also proved statements about period maps and moduli for such families. However, our definition is more subtle than theirs, given that our goal is to derive precise data about the monodromy of algebraic cycles. The relationship between the definitions is discussed in greater detail in Remark \ref{rem:hloy}.
\medskip

The structure of this paper is as follows. In Section \ref{section:FamK3} we begin with the central definitions of $N$-polarized (Definition \ref{def:polarized}) and $(N,G)$-polarized (Definition \ref{def:polarizable}) families of K3 surfaces, where $N$ is a lattice and $G$ is a finite group. The first is a direct extension of the definition of $N$-polarization for K3 surfaces to families and does not allow for any action of monodromy on the lattice $N$. The second is more subtle: it allows for a nontrivial action of monodromy, but this monodromy is controlled by the group $G$.

The remainder of Section \ref{section:FamK3} proves some basic results about $N$- and $(N,G)$-polarized families of K3 surfaces and their moduli. Of particular importance are Proposition \ref{prop:sympaut} and Corollary \ref{cor:sympaut}, which use this theory to give conditions under which symplectic automorphisms can be extended from individual K3 fibres to entire families of K3 surfaces.

Section \ref{section:siif} expands upon these results, focussing mainly on the case where the symplectic automorphism is a Nikulin involution. The main result of this section is Theorem \ref{thm:involutions}, which shows that the resolved quotient of an $N$-polarized family of K3 surfaces, where $N$ is the N\'{e}ron-Severi lattice of a general fibre, by a Nikulin involution is an $(N',G)$-polarized family of K3 surfaces, where $N'$ is the N\'{e}ron-Severi lattice of a general fibre of the resolved quotient family and $G$ is a finite group.

In Section \ref{genconst} we specialize all of these results to families of $M$-polarized K3 surfaces with their canonical Nikulin involution, which extends globally over the family by Corollary \ref{cor:sympaut}. The resolved quotient family is an $(N',G)$-polarized family of K3 surfaces whose general fibre is a Kummer surface. The first major result of this section, Proposition \ref{prop:generalcaseG}, places bounds on the size of the group $G$.

To improve upon this result, in Section \ref{undokummer} we show that, after proceeding to a finite cover of the base, we may realise these families of Kummer surfaces by applying the Kummer construction fibrewise to a family of Abelian surfaces, a process which we call \emph{undoing the Kummer construction}. As a result of this process we obtain Theorem \ref{thm:dekummer} and Corollary \ref{cor:dekummer}, which enable explicit calculation of the group $G$.

In Section \ref{sect:Mnthreefolds} we further specialize this analysis to families of $M_n$-polarized K3 surfaces, then apply the resulting theory to the study of the Calabi-Yau threefolds from the Doran-Morgan list. The main results here are Theorems \ref{thm:Mnfamilies} and \ref{thm:M1families}, which show that twelve of the fourteen cases from that list admit fibrations by $M_n$-polarized K3 surfaces. In fact, we prove an even stronger result: for $n \geq 2$ these fibrations are in fact pull-backs of special $M_n$-polarized families on the moduli space of $M_n$-polarized K3 surfaces, under the \emph{generalized functional invariant} map, and for $n = 1$ they are pull-backs of a special $2$-parameter $M_1$-polarized family by a closely related map.

We compute the generalized functional invariant maps for all of these fibrations in Sections \ref{sect:14caseapp} and \ref{sect:M1}. We find that they all have a standard form, defining multiple covers of the moduli spaces of $M_n$-polarized K3 surfaces with ramification behaviour determined by a pair of integers $(i,j)$.

Finally, in Section \ref{sect:arith/thin} we use these results to make an interesting observation concerning an open problem related to the Doran-Morgan classification. Recall that each of the threefolds from this classification moves in a one parameter family over the thrice-punctured sphere. Recently there has been a great deal of interest in studying the action of monodromy around the punctures on the third integral cohomology group of the threefolds. This monodromy action defines a Zariski dense subgroup of $\mathrm{Sp}(4,\R)$, which may be either arithmetic or non-arithmetic (more commonly called \emph{thin}). Singh and Venkataramana \cite{acshg}\cite{a4mgacyt} have proved that the monodromy is arithmetic in seven of the fourteen cases from the Doran-Morgan list, and Brav and Thomas \cite{tmsp4} have proved that it is thin in the remaining seven. It is an open problem to find geometric criteria that distinguish between these two cases.

In Theorem \ref{thm:thinarith} we provide a potential solution to this problem: the cases may be distinguished by the values of the pair of integers $(i,j)$ arising from the generalized functional invariants of \emph{torically induced} K3 fibrations on them. Specifically, we find that a case has thin monodromy if and only if neither $i$ nor $j$ is equal to two. This suggests that it may be possible to express the integral monodromy matrices for the families of Calabi-Yau threefolds from the Doran-Morgan list in terms of the families of transcendental cycles for their internal K3-fibrations, and that doing so explicitly may be a good route towards an understanding of the geometric origin of the arithmetic/thin dichotomy.

A different criterion to distinguish the arithmetic and thin cases was recently given by Hofmann and van Straten \cite[Section 6]{smgfis}, using an observation about the integers $m$ and $a$ from \cite[Table 1]{doranmorgan} (which are called $d$ and $k$ in \cite{smgfis}). Furthermore, the discovery of a yet another criterion has been announced in lectures by M. Kontsevich, using a technique involving Lyapunov exponents. Whilst our result does not appear to bear any immediate relation to either of these other results, it is our intention to investigate the links between them in future work.

\subsection{Acknowledgements} A part of this work was completed while A. Thompson was in residence at the Fields Institute Thematic Program on Calabi-Yau Varieties: Arithmetic, Geometry and Physics; he would like to thank the Fields Institute for their support and hospitality.

\section{Families of K3 surfaces}
\label{section:FamK3}
Begin by assuming that $X$ is a K3 surface. The N\'eron-Severi group of divisors modulo homological equivalence on $X$ forms a non-degenerate lattice inside of $H^2(X,\mathbb{Z})$, denoted $\NS(X)$, which is even with signature $(1,\rho -1)$. The lattice of cycles orthogonal to $\NS(X)$ is called the lattice of transcendental cycles on $X$ and is denoted $\T(X)$.

The aim of this section is to develop theoretical tools that will enable us to embark upon a study of the action of monodromy on the N\'eron-Severi group of a fibre in a family of K3 surfaces.

\subsection{Families of lattice polarized K3 surfaces}

We begin with some generalities on families of K3 surfaces. A family of K3 surfaces will be a variety $\mathcal{X}$ and a flat surjective morphism $\pi\colon \mathcal{X} \rightarrow U$ onto some smooth, irreducible, quasiprojective variety $U$ such that for each $p \in U$ the fibre $X_p$ above $p$ is a smooth K3 surface. For simplicity the reader may assume that $U$ has dimension $1$ but our results are valid in arbitrary dimension. We further assume that there is a line bundle $\mathcal{L}$ whose restriction to each fibre of $\pi$ is ample and primitive in $\Pic(X_p)$ for each $p \in U$.

In the analytic topology, there is an integral local system on $U$ given by $R^2 \pi_*\mathbb{Z}$ whose fibre above $u$ is isomorphic to $H^2(X_p,\mathbb{Z})$. The Gauss-Manin connection $\nabla_{\mathrm{GM}}$ is a flat connection on $R^2\pi_*\mathbb{Z} \otimes \mathcal{O}_U$.

The cup-product pairing on $H^2(X_p,\mathbb{Z})$ extends to a bilinear pairing of sheaves
\begin{equation}
\label{eqn:pairing}
\langle \cdot, \cdot \rangle _\mathcal{X} = R^2\pi_*\mathbb{Z} \times R^2\pi_* \mathbb{Z} \rightarrow R^4 \pi_* \mathbb{Z} \cong \mathbb{Z}_U
\end{equation} 
where $\mathbb{Z}_U$ is the constant sheaf on $U$ with $\mathbb{Z}$ coefficients. This form extends naturally to arbitrary sub-rings of $\mathbb{C}$.

There is a Hodge filtration on $R^2\pi_* \mathbb{Z} \otimes \mathcal{O}_U$. In particular $\mathcal{H}^{2,0}_{\mathcal{X}} = F^2 (R^2\pi_* \mathbb{Z} \otimes \mathcal{O}_U)$, and there is a local subsystem of $R^2 \pi_* \mathbb{C}$ which gives rise to $\mathcal{H}^{2,0}_\mathcal{X}$ . Choosing a flat local section of $\mathcal{H}^{2,0}_\mathcal{X}$, which we will call $\omega_\mathcal{X}$, we take the local subsystem of $R^2\pi_* \mathbb{Z}$ which is orthogonal to $\omega_\mathcal{X}$. Since the pairing $\langle \cdot,\cdot \rangle_\mathcal{X}$ is $\mathbb{Z}$ linear and $\omega_\mathcal{X}$ is flat, $\omega^\perp_\mathcal{X}$ is defined globally on $U$. We will call this local subsystem $\mathcal{NS}(\mathcal{X})$. Note that $\mathcal{NS}(\mathcal{X})$ is the Picard sheaf of the flat morphism $\pi$.

We let $\mathcal{T}(\mathcal{X})$ be the integral orthogonal complement of $\mathcal{NS}(\mathcal{X})$. We have an orthogonal direct sum decomposition over $\mathbb{Q}$
$$
R^2\pi_*\mathbb{Q} = (\mathcal{T}(\mathcal{X}) \oplus \mathcal{NS}(\mathcal{X}))\otimes_{\mathbb{Z}_U} \mathbb{Q}_U
$$

Our aim is to use this to study the action of monodromy on the N\'{e}ron-Severi lattice of a general fibre of $\calX$. In order to gain control of this monodromy, we begin by extending the definition of lattice polarization for K3 surfaces to families.

To do this, let $\mathcal{N}$ be a local subsystem of $\mathcal{NS}(\mathcal{X})$ such that for any $p \in U$, the restriction of $\langle\cdot, \cdot \rangle_\mathcal{X}$ to the fibre $\mathcal{N}_p$ over $p$ exhibits $\mathcal{N}_p$ as a non-degenerate integral lattice of signature $(1,n-1)$, which is (non-canonically) isomorphic to a lattice $N$ and embedded into $H^2(X_p,\mathbb{Z})$ as a primitive sublattice containing the Chern class of the ample line bundle $\calL_p$. This allows us to define a na\"{i}ve extension of lattice polarization to families.

\begin{defn} \label{def:polarized}
The family $\mathcal{X}$ is \emph{$N$-polarized} if the local system $\mathcal{N}$ is a trivial local system. 
\end{defn}

Note that any family of K3 surfaces is polarized by the rank one lattice generated by the Chern class of the line bundle $\mathcal{L}$ restricted to each fibre.

Unfortunately, this definition is too rigid for our needs: it is easy to see that for an $N$-polarized family of K3 surfaces, a choice of isomorphism $N \cong \mathcal{N}_{p}$ for any point $p$ determines uniquely an isomorphism $N \cong \mathcal{N}_q$ for any other point $q$ by parallel transport, so this definition does not allow for any action of monodromy on $\calN_q$. We will improve upon this definition in Section \ref{section:monodromyautos}, but in order to do so we first need to develop some general theory.

\subsection{Monodromy of algebraic cycles on K3 surfaces}

In this section we will begin discusing the action of monodromy on the N\'{e}ron-Severi group of a general fibre of $\mathcal{X}$. Let $p$ be a point in $U$ such that the fibre above $p$ has $\NS(X_{p}) \cong \mathcal{NS}(\mathcal{X})_{p}$. Parallel transport along paths in $U$ starting at the base point $p$ gives a monodromy representation of $\pi_1(U,p)$
$$
\rho_\mathcal{X}\colon \pi_1(U,p) \rightarrow \mathrm{O}(H^2(X_{p},\mathbb{Z}))
$$
since we have the pairing in Equation \eqref{eqn:pairing}. Furthermore, $\rho_\mathcal{X}$ restricts to monodromy representations of both $\mathcal{NS}(\mathcal{X})$ and $\mathcal{T}(\mathcal{X})$, written as
$$
\rho_\mathcal{NS}\colon \pi_1(U,p) \rightarrow \mathrm{O}(\NS(X_{p}))
$$
and
$$
\rho_\mathcal{T}\colon \pi_1(U,p) \rightarrow \mathrm{O}(\T(X_{p})).
$$
Similarly for any local subsystem $\mathcal{N}$ of $R^2 \pi_*\mathbb{Z}$, we will denote the associated monodromy representation $\rho_\mathcal{N}$. Note here that if $\mathcal{X}$ is $N$-polarized, then the image of $\rho_\mathcal{N}$ is the trivial subgroup $\Id$.

Now we prove an elementary but useful result concerning the image of $\rho_{\mathcal{NS}}$. Here we let $X$ be a K3 surface. Recall that the lattice $\NS(X)$ is an even lattice of signature $(1,\rank \NS(X) -1)$. For such a lattice $\NS(X)$, there is a set of roots
$$
\Delta_{X} = \{ w \in \NS(X): \langle w,w\rangle = -2 \}.
$$
The Weyl group $W_X$ is the group generated by Picard-Lefschetz reflections across roots in $\Delta_X$. It admits an embedding into the orthogonal group $\mathrm{O}(\NS(X))$. Denote the set of roots in $\Delta_X$ which are dual to the fundamental classes of rational curves by $\Delta_{X}^+$. Then a fundamental domain for the action of $W_{X}$ on $\NS(X)$ is given by the closure of the connected polyhedral cone
$$
K(X) = \{ w \in \NS(X) \otimes \mathbb{R} : \langle w,w\rangle > 0, \langle w,\delta \rangle > 0  \ \mathrm{for}\ \mathrm{all}\  \delta \in \Delta_{X}^+\}.
$$
$K(X)$ is the K\"ahler cone of $X$ \cite[Corollary VIII.3.9]{bpv}.

If we let $\mathrm{O}_+(\NS(X))$ be the subgroup of $ \mathrm{O}(\NS(X))$ which fixes the positive cone in $\NS(X)$ and let $D_X$ be the subgroup of $\mathrm{O}_+(\NS(X))$ which maps $K(X)$ to itself, then we obtain a semidirect product decomposition
$$
\mathrm{O}_+(\NS(X)) = D_{X} \ltimes W_{X}.
$$

Now let $L$ be an ample line bundle on $X$. Then the Chern class of $L$ is contained in $K(X)$.  Define $D_{X}^L$ to be the stabilizer of this Chern class in $D_{X}$. 

\begin{proposition}
\label{prop:monodromy}
Let $\mathcal{X}$ be a family of K3 surfaces and let $X_p$ be a generic fibre of $\mathcal{X}$. Let $\mathcal{L}_p$ be the restriction of the bundle $\mathcal{L}$ on $\mathcal{X}$ to $X_p$. Then the group $D_{X_p}^{\mathcal{L}_p}$ is finite and contains the image of $\rho_\mathcal{NS}$.
\end{proposition}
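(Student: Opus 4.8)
The plan is to establish the two assertions separately: first that $D_{X_p}^{\mathcal{L}_p}$ is finite, and then that it contains $\Image(\rho_\mathcal{NS})$. Throughout, write $v = c_1(\mathcal{L}_p) \in \NS(X_p)$ for the Chern class of the ample bundle $\mathcal{L}_p$; since $\mathcal{L}_p$ is ample we have $\langle v, v\rangle > 0$, and $v$ lies in the interior of the K\"ahler cone $K(X_p)$.

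For finiteness, I would argue that the full stabilizer of $v$ in $\mathrm{O}(\NS(X_p))$ is finite, which suffices since $D_{X_p}^{\mathcal{L}_p}$ is by definition a subgroup of that stabilizer. Because $\NS(X_p)$ has signature $(1, n-1)$ and $\langle v, v \rangle > 0$, the span of $v$ is a positive-definite line, so its orthogonal complement $v^\perp$ inside $\NS(X_p)$ is a negative-definite lattice of rank $n-1$. Any isometry fixing $v$ preserves $v^\perp$, and the resulting restriction map to $\mathrm{O}(v^\perp)$ is injective: an isometry fixing $v$ and acting trivially on $v^\perp$ is the identity on $\NS(X_p) \otimes \Q = \Q v \oplus (v^\perp \otimes \Q)$, hence on $\NS(X_p)$. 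As $v^\perp$ is definite, $\mathrm{O}(v^\perp)$ is finite, since a lattice isometry permutes the finitely many vectors of each fixed norm, and the claim follows.

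For the containment, the key observations are that monodromy preserves the cup-product pairing and fixes $v$. The latter holds because $\mathcal{L}$ is a globally defined line bundle on $\mathcal{X}$, so its restriction has a monodromy-invariant Chern class; hence $\Image(\rho_\mathcal{NS}) \subseteq \mathrm{O}(\NS(X_p))$ fixes $v$. Since $v$ has positive norm and lies in the positive cone, each $g \in \Image(\rho_\mathcal{NS})$ maps the positive cone to the component of $\{w : \langle w,w\rangle > 0\}$ containing $g(v) = v$, namely the positive cone itself, giving $\Image(\rho_\mathcal{NS}) \subseteq \mathrm{O}_+(\NS(X_p))$. It remains to show each $g$ lies in $D_{X_p}$, i.e. preserves $K(X_p)$. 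Here I would use that $g$, being an integral isometry, permutes the set of roots $\Delta_{X_p}$ and therefore conjugates Picard--Lefschetz reflections to Picard--Lefschetz reflections; thus $g$ normalizes the Weyl group $W_{X_p}$ and permutes its chambers in the positive cone. The chamber $K(X_p)$ is the unique one containing $v$ in its interior, and $g(v) = v$, so $g(K(X_p))$ is again the chamber whose interior contains $v$, forcing $g(K(X_p)) = K(X_p)$. Hence $g \in D_{X_p}$, and since $g$ fixes $v$ we conclude $g \in D_{X_p}^{\mathcal{L}_p}$.

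The routine parts---finiteness of definite orthogonal groups and the invariance of the pairing under monodromy---are standard; the step requiring the most care is the chamber argument establishing $\Image(\rho_\mathcal{NS}) \subseteq D_{X_p}$. Its crux is the combination of two facts: that monodromy fixes the ample class $v$ (so that the decomposition $\mathrm{O}_+(\NS(X_p)) = D_{X_p} \ltimes W_{X_p}$ can be exploited), and that an isometry permutes the Weyl chambers and hence cannot send the distinguished chamber $K(X_p)$ anywhere other than to itself once the fixed interior point $v$ is pinned down. I would also double-check the implicit genericity hypothesis on $X_p$, which is what guarantees $\NS(X_p) \cong \mathcal{NS}(\mathcal{X})_p$ and hence that $\rho_\mathcal{NS}$ genuinely acts on $\NS(X_p)$.
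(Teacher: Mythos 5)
Your proof is correct and follows essentially the same route as the paper: finiteness via the injective restriction to the negative-definite complement $[\mathcal{L}_p]^\perp$ (whose orthogonal group is finite), and containment via the fact that monodromy fixes the ample class $v$, which pins down the K\"ahler chamber among the Weyl chambers. Your chamber-permutation argument is just a slightly more explicit phrasing of the paper's appeal to $\overline{K(X_p)}$ being a fundamental domain for $W_{X_p}$.
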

\begin{proof}
First we show that $D_{X_p}^{\mathcal{L}_p}$ is a finite group. Let $\gamma$ be in $D^{\mathcal{L}_p}_{X_p}$. Then $\gamma$ fixes $\mathcal{L}_p$ by definition. Therefore $\gamma$ acts naturally on $[\mathcal{L}_{p}]^\perp$ and fixes $[\mathcal{L}_{p}]^\perp$ if and only if it fixes all of $\NS(X_{p})$. Since $\mathcal{L}_{p}$ is ample, the orthogonal complement of $[\mathcal{L}_p]$ in $\NS(X_p)$ is negative definite by the Hodge index theorem.

We then recall the fact that $\mathrm{O}(N)$ is finite for any definite lattice $N$, so $D^\mathcal{L}_\mathcal{X}$ is contained in a finite group and thus is itself finite.

To see that $\rho_\mathcal{NS}$ has image contained in $D^{\mathcal{L}_p}_{X_p}$, we recall that $\rho_\mathcal{NS}$ fixes $\mathcal{L}_{p} \in K(X_{p})$ and hence, since the closure of $K(X_p)$ is a fundamental domain for $W_{X_p}$ and the action of $W_{X_p}$ is continuous, $\rho_\mathcal{NS}$ must have image in $D_{X_p}$.
\end{proof}

\subsection{Monodromy and symplectic automorphisms} \label{section:monodromyautos}

We are now almost ready to make a central definition which extends Definition \ref{def:polarized} to cope with the possible action of monodromy on $N$. 

Denote by $N^*$ the dual lattice of $N$. We may embed $N^* \subseteq N \otimes_\mathbb{Z} \mathbb{Q}$ as the sublattice of elements $u$ of $N\otimes_\mathbb{Z} \mathbb{Q}$ such that $\langle u,v\rangle \in \mathbb{Z}$ for all $v \in N$. 

\begin{defn}
The \emph{discriminant lattice} of $N$, which we call $A_N$, is the finite group $N^*/N$ equipped with the bilinear form
$$
b_N\colon A_N \times A_N \rightarrow \mathbb{Q} \bmod \mathbb{Z}.
$$ 
induced by the bilinear form on $N\otimes_\mathbb{Z} \mathbb{Q}$. 
\end{defn}

For each lattice $N$ we may define a map $\alpha_N\colon \mathrm{O}(N) \rightarrow \Aut(A_N)$ where $\Aut(A_N)$ is the group of automorphisms of the finite abelian group $A_N$ which preserve the bilinear form $b_N$. Denote the kernel of $\alpha_N$ by $\mathrm{O}(N)^*$. Then we make the central definition:
 
\begin{defn}
\label{def:polarizable}
Fix an even lattice $N$ with signature $(1, n-1)$ and a subgroup $G$ of $\Aut(A_N)$. Let $\mathcal{X}$ be a family of K3 surfaces and let $X_p$ be a generic fibre of $\mathcal{X}$. Assume that there is local sub-system $\mathcal{N} \subseteq \mathcal{NS}(\mathcal{X})$ which has fibres $\calN_p$ that are isometric to $N$ and are embedded into $H^2(X_p,\mathbb{Z})$ as primitive sublattices containing the Chern class of the ample line bundle $\calL_p$. Then $\mathcal{X}$ is called an \emph{$(N,G)$-polarized family of K3 surfaces} if the restriction of the map $\alpha_N$ to the image of $\rho_{\mathcal{N}}$ is injective and has image inside of $G$.
\end{defn}

One sees that if $\Id$ is the trivial subgroup of $\Aut(A_N)$, then the definition of an $N$-polarized family of K3 surfaces is identical to the definition of a family of $(N,\Id)$-polarized K3 surfaces. We also note that, if $G \subset G'$, then any $(N,G)$-polarized family of K3 surfaces will also be $(N,G')$-polarized. With this in mind, we identify a special class of $(N,G)$-polarized families where the group $G$ is as small as possible.

\begin{defn} An $(N,G)$-polarized family of  K3 surfaces $\calX$ is called \emph{minimally $(N,G)$-polarized} if the composition $\alpha_N \cdot \rho_{\mathcal{N}}$ is surjective onto $G$.\end{defn}

\begin{remark}\label{rem:hloy}
We note that in \cite{adck3smt}, the authors introduce a similar notion of $N$-polarizability for a family of K3 surfaces. A K3 surface $X$ is \emph{$N$-polarizable} in the sense of \cite{adck3smt} if there is a sublattice inside of $\NS(X)$ isomorphic to $N$, but the primitive embedding of $N$ into $\NS(X)$ is only fixed up to automorphism of the K3 lattice $\Lambda_{\mathrm{K3}}$. A family of K3 surfaces is then called $N$-polarizable if each fibre is $N$-polarizable. There is a well-defined period space of $N$ polarizable K3 surfaces $\mathcal{M}_{N}^{\circ}$, so that to any family of $N$-polarizable K3 surfaces there is a well-defined period map.

Our definition is more subtle than this, since our goal is to derive precise data about the monodromy of algebraic cycles. Any $(N,G)$-polarized family of K3 surfaces is $N$-polarizable, but the converse does not hold. In fact, both of the families constructed in Section \ref{section:example} are families of $N$-polarizable K3 surfaces, but only one of them is $(N,G)$-polarized.
\end{remark}

There is a close relationship between $(N,G)$-polarizations and symplectic automorphisms. Recall the following definition:

\begin{defn}
\label{def:sympaut}
Let $X$ be a smooth K3 surface and let $\tau\colon X \rightarrow X$ be an automorphism of $X$. The automorphism $\tau$ is called a {\it symplectic automorphism} if for some (hence any) non-vanishing holomorphic 2-form $\omega$ on $X$, $\tau^* \omega = \omega$. If $\tau$ has order $2$, it is called a symplectic involution of $X$ or a {\it Nikulin involution}.
\end{defn}

Symplectic automorphisms of finite order on K3 surfaces exhibit behaviour similar to translation by a torsion section on an elliptic curve. The quotient of an elliptic curve by some subgroup of $\Pic(E)_{\mathrm{tors}}$ is an isogenous elliptic curve, i.e. an elliptic curve $E'$ such that there is a Hodge isometry $H^1(E,\mathbb{Q}) \cong H^1(E',\mathbb{Q})$. Analogously there is a sense in which the resolved quotient of a K3 surface $X$ by a finite group of symplectic automorphisms is isogenous to $X$: there is a real quadratic extension of $\mathbb{Q}$ under which the Hodge structures on their transcendental lattices are isometric. This will be explained in detail by Proposition \ref{prop:hodge}.

The following is a consequence of the famous Global Torelli Theorem for K3 surfaces \cite{ttastk3}\cite{fagkk3s}. More precisely, it may be seen as a corollary of \cite[Theorem 4.2.3]{iqfaag}.

\begin{theorem}
\label{thm:pss}
The kernel of the map $\alpha_{\NS(X_{p})}\colon D_\mathcal{X}^\mathcal{L} \rightarrow \Aut(A_{\NS(X_{p})})$ is isomorphic to the finite group of symplectic automorphisms of $X_{p}$ which fix $[\mathcal{L}_{p}]$.
\end{theorem}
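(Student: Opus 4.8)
The plan is to construct explicit mutually inverse maps between the two groups, using the orthogonal decomposition of $H^2(X_p,\mathbb{Z})$ into $\NS(X_p)$ and $\T(X_p)$ together with the Global Torelli Theorem. Throughout I write $\Lambda = H^2(X_p,\mathbb{Z})$, which is unimodular, so that $\NS(X_p)$ and $\T(X_p)$ are mutually orthogonal primitive sublattices and their discriminant forms are anti-isometric, $A_{\NS(X_p)} \cong A_{\T(X_p)}$, compatibly with the action of any isometry of $\Lambda$ preserving the splitting. I will also use Nikulin's theorem that a symplectic automorphism of finite order acts trivially on the transcendental lattice.

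First I would define a map from the symplectic automorphisms fixing $[\mathcal{L}_p]$ into $\ker \alpha_{\NS(X_p)}$. Given such an automorphism $\tau$, the induced isometry $\tau^*$ fixes $\omega_{X_p}$, hence preserves the Hodge structure; since the group of automorphisms fixing an ample class is finite, $\tau$ has finite order, and so $\tau^*$ acts trivially on $\T(X_p)$. As $\tau$ is an automorphism it preserves the Kähler cone and fixes $[\mathcal{L}_p]$, so $\tau^*|_{\NS(X_p)}$ lies in $D_{X_p}^{\mathcal{L}_p}$; and because $\tau^*$ is the identity on $\T(X_p)$ it induces the identity on $A_{\T(X_p)}$, hence on $A_{\NS(X_p)}$, placing $\tau^*|_{\NS(X_p)}$ in $\ker \alpha_{\NS(X_p)}$. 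This assignment is injective, since $\tau^*$ is determined by its restrictions to $\NS(X_p)$ and to $\T(X_p)$ (the latter being trivial), and $\tau$ is recovered uniquely from $\tau^*$ by Torelli.

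Conversely, given $g \in \ker \alpha_{\NS(X_p)} \subseteq D_{X_p}^{\mathcal{L}_p}$, I would extend it to $\Lambda$ by $\tilde g := g \oplus \mathrm{id}_{\T(X_p)}$. The lattice-theoretic point is that such a direct sum descends to an isometry of the unimodular overlattice $\Lambda$ precisely when the two summands induce compatible maps on the glued discriminant forms; since $g$ acts trivially on $A_{\NS(X_p)}$ and $\mathrm{id}$ acts trivially on $A_{\T(X_p)}$, this compatibility is immediate and $\tilde g \in \mathrm{O}(\Lambda)$. Because $\omega_{X_p} \in \T(X_p)\otimes\mathbb{C}$ and $\tilde g$ fixes $\T(X_p)$ pointwise, $\tilde g$ is a Hodge isometry; and because $g \in D_{X_p}$ preserves the ample cone while fixing the Kähler class $[\mathcal{L}_p]$, one checks that $\tilde g$ sends the Kähler cone of $X_p$ to itself. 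The Global Torelli Theorem then produces a unique automorphism $f$ of $X_p$ with $f^* = \tilde g$; it is symplectic, as $f^*\omega_{X_p} = \omega_{X_p}$, and fixes $[\mathcal{L}_p]$, as $g$ does. These two constructions are mutually inverse, yielding the claimed isomorphism (the contravariance of $\tau \mapsto \tau^*$ being harmless, since an anti-isomorphism of groups induces an isomorphism).

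The main obstacle, and the step deserving the most care, is the correct invocation of the Global Torelli Theorem in the reverse direction: one must verify not merely that $\tilde g$ fixes a single Kähler class but that it maps the entire Kähler cone to itself, which is exactly where the hypothesis $g \in D_{X_p}$ (preservation of the ample cone $K(X_p)$, equivalently of the set $\Delta_{X_p}^+$ of effective $(-2)$-classes generating $W_{X_p}$) does the real work. This, together with the discriminant-form compatibility that makes $\tilde g$ well defined, is the heart of the argument; once they are in place, checking that the two maps are inverse bijections is routine, and the finiteness of both groups follows from Proposition \ref{prop:monodromy} for $D_{X_p}^{\mathcal{L}_p}$ and from finiteness of the polarized automorphism group on the other side.
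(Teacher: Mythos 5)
Your proof is correct. The paper offers no argument of its own for this statement --- it simply records Theorem \ref{thm:pss} as a consequence of the Global Torelli Theorem and cites \cite[Theorem 4.2.3]{iqfaag} --- and what you have written is exactly the standard derivation from that theorem, built from the same ingredients the paper assembles elsewhere: the gluing criterion of Proposition \ref{prop:Nikulin-Lattices} to extend $g \oplus \mathrm{id}$ on $\NS(X_p) \oplus \T(X_p)$ to an isometry of $H^2(X_p,\mathbb{Z})$, the triviality of finite-order symplectic actions on $\T(X_p)$, and the strong Torelli theorem (where, as you anticipate, fixing the single ample class $[\mathcal{L}_p]$ already forces the Hodge isometry to preserve the entire K\"ahler cone, since it permutes the chambers cut out by the $(-2)$-walls and sends an interior point of the K\"ahler chamber to another such point).
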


From this, using Proposition \ref{prop:monodromy}, we obtain:

\begin{corollary}
\label{cor:rephrase}
Let $\mathcal{X}$ be a family of K3 surfaces with generic N\'eron-Severi lattice $N$. The family $\mathcal{X}$ is $(N,G)$-polarized for some $G$ in $\Aut(A_N)$ if and only if there is no $\gamma \in \pi_1(U,p)$ such that $\rho_\mathcal{NS}(\gamma) = \sigma|_{\NS(X_{p})}$ for some symplectic automorphism $\sigma$ of $X_{p}$. 
\end{corollary}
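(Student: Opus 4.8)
The plan is to reduce both implications to a single algebraic statement about $\alpha_N$ restricted to the image of monodromy, and then to identify the kernel of $\alpha_N$ with symplectic automorphisms using the two results already in hand. First I would record two bookkeeping reductions. Since the generic N\'eron–Severi lattice of $\mathcal{X}$ is $N$, the relevant local subsystem $\mathcal{N}$ coincides with $\mathcal{NS}(\mathcal{X})$, so $\rho_{\mathcal{N}} = \rho_{\mathcal{NS}}$, and the fibrewise isometry $\mathcal{N}_p \cong N$ identifies $\alpha_N$ with $\alpha_{\NS(X_p)}$. Moreover, the existential ``for some $G$'' strips all content from the clause of Definition \ref{def:polarizable} requiring the image of $\alpha_N \circ \rho_{\mathcal{N}}$ to lie inside $G$: one may simply take $G = \Aut(A_N)$, or $G$ equal to that image itself. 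Hence $\mathcal{X}$ is $(N,G)$-polarized for some $G$ if and only if the restriction of $\alpha_N$ to $\Image(\rho_{\mathcal{NS}})$ is injective.

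The heart of the argument is then group theory combined with the cited results. A homomorphism is injective on a subgroup exactly when its kernel meets that subgroup only in the identity, so $\alpha_N|_{\Image(\rho_{\mathcal{NS}})}$ fails to be injective precisely when there is a nontrivial $g \in \Image(\rho_{\mathcal{NS}}) \cap \ker(\alpha_N)$. By Proposition \ref{prop:monodromy}, $\Image(\rho_{\mathcal{NS}})$ is contained in the finite group $D_{X_p}^{\mathcal{L}_p}$, so any such $g$ lies in $\ker\!\big(\alpha_N|_{D_{X_p}^{\mathcal{L}_p}}\big)$. Theorem \ref{thm:pss} identifies this kernel with the group of symplectic automorphisms of $X_p$ fixing $[\mathcal{L}_p]$, acting via restriction to $\NS(X_p)$. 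Thus a nontrivial such $g$ is exactly an element of the form $\rho_{\mathcal{NS}}(\gamma) = \sigma|_{\NS(X_p)}$ for a nontrivial symplectic automorphism $\sigma$; the constraint that $\sigma|_{\NS(X_p)}$ fix $[\mathcal{L}_p]$ is automatic, since every element of $\Image(\rho_{\mathcal{NS}})$ fixes the flat global class $[\mathcal{L}_p]$. Taking the contrapositive of this chain of equivalences gives the statement.

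The one genuine subtlety I would be careful to flag is the treatment of the identity. Because the identity is itself a symplectic automorphism (and $\rho_{\mathcal{NS}}(e) = \Id$), the condition in the statement must be read as excluding the trivial case, i.e.\ as asserting that there is no $\gamma$ with $\rho_{\mathcal{NS}}(\gamma) = \sigma|_{\NS(X_p)}$ for some $\sigma \neq \mathrm{id}$ (equivalently, $\rho_{\mathcal{NS}}(\gamma) \neq \Id$). This is exactly what non-injectivity of $\alpha_N$ detects, since injectivity is governed only by nontrivial kernel elements, so with this reading the equivalence is exact. Apart from this, the only remaining care is to match the finite group $D_{X_p}^{\mathcal{L}_p}$ of Proposition \ref{prop:monodromy} with the domain $D_{\mathcal{X}}^{\mathcal{L}}$ of Theorem \ref{thm:pss} under the identification $\mathcal{N}_p \cong N$; once this is in place the two results combine immediately, and I do not anticipate any serious obstacle beyond this accounting.
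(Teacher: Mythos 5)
Your proposal is correct and follows essentially the same route the paper intends: the corollary is obtained by combining Proposition \ref{prop:monodromy} (which places $\Image(\rho_{\mathcal{NS}})$ inside the finite group $D_{X_p}^{\mathcal{L}_p}$) with Theorem \ref{thm:pss} (which identifies $\ker\alpha_{\NS(X_p)}$ on that group with the symplectic automorphisms fixing $[\mathcal{L}_p]$), exactly as you do. Your explicit remarks on discarding the clause ``image inside $G$'' by taking $G$ to be the whole image, and on reading the statement as excluding the identity automorphism, are careful clarifications of points the paper leaves implicit, but they do not change the argument.
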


Therefore, a measure of how far a family of K3 surfaces with generic N\'eron-Severi lattice $N$ can be from being $(N,G)$-polarized is given by the size of the group of symplectic automorphisms of a generic $N$-polarized K3 surface. The number of possible finite groups of symplectic automorphisms of a K3 surface is relatively small. Mukai \cite[Theorem 0.3]{fgaks3mg} has shown that such groups are all contained as special subgroups of the Mathieu group $M_{23}$, and in particular Nikulin \cite[Proposition 7.1]{fagkk3s} has shown that an algebraic K3 surface with symplectic automorphism must have N\'eron-Severi rank at least $9$. This gives:

\begin{corollary}
Any family of K3 surfaces with generic N\'eron-Severi group $N$ having $\rank(N) < 9$ is $(N,G)$-polarized for some $G \subset \Aut(A_N)$.
\end{corollary}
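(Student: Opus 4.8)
The plan is to obtain this as a direct formal consequence of the three preceding results, with Nikulin's rank bound supplying the key geometric input. First I would fix a base point $p \in U$ at which $\NS(X_p) \cong \mathcal{NS}(\mathcal{X})_p \cong N$; since $N$ is by hypothesis the generic N\'eron-Severi lattice, such a generic fibre $X_p$ exists, and I may use it to define the monodromy representations as in the text. Because $\rank N < 9$, Nikulin's Proposition 7.1 (cited above) applies to $X_p$: an algebraic K3 surface carrying a non-trivial finite-order symplectic automorphism must have N\'eron-Severi rank at least $9$, the coinvariant lattice of such an automorphism already accounting for rank $\geq 8$. Hence $X_p$ admits no non-trivial symplectic automorphism, and in particular the finite group of symplectic automorphisms of $X_p$ fixing $[\mathcal{L}_p]$ is trivial.

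Next I would feed this triviality into Theorem \ref{thm:pss}, which identifies the kernel of the map $\alpha_{\NS(X_p)}\colon D_{\mathcal{X}}^{\mathcal{L}} \to \Aut(A_{\NS(X_p)})$ precisely with that group of symplectic automorphisms. Since the group is trivial, the restriction of $\alpha_N = \alpha_{\NS(X_p)}$ to $D_{X_p}^{\mathcal{L}_p}$ is injective. Combining this with Proposition \ref{prop:monodromy}, which places the image of $\rho_{\mathcal{NS}}$ inside the finite group $D_{X_p}^{\mathcal{L}_p}$, and observing that $N$ being the generic N\'eron-Severi lattice forces $\mathcal{N} = \mathcal{NS}(\mathcal{X})$ and hence $\rho_{\mathcal{N}} = \rho_{\mathcal{NS}}$, I conclude that $\alpha_N$ restricted to the image of $\rho_{\mathcal{N}}$ is injective. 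Setting $G := \alpha_N(\Image \rho_{\mathcal{N}}) \subseteq \Aut(A_N)$, Definition \ref{def:polarizable} is then satisfied and $\mathcal{X}$ is $(N,G)$-polarized (indeed minimally so). Equivalently, one may route the argument through Corollary \ref{cor:rephrase}: the only symplectic automorphism available on $X_p$ is the identity, so no non-trivial loop $\gamma$ can satisfy $\rho_{\mathcal{NS}}(\gamma) = \sigma|_{\NS(X_p)}$, which is exactly its criterion.

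I do not anticipate any real analytic or combinatorial obstacle here, since the statement is a formal corollary of Nikulin's bound together with Theorem \ref{thm:pss} and Proposition \ref{prop:monodromy}. The one point demanding care is the bookkeeping around the identity automorphism: the trivial automorphism is always symplectic, so Corollary \ref{cor:rephrase} must be read as excluding only \emph{non-trivial} symplectic automorphisms. For this reason I would prefer to phrase the conclusion in terms of injectivity of $\alpha_N$ on the kernel, as in the first two paragraphs, where the triviality of the relevant automorphism group makes the injectivity manifest and sidesteps any ambiguity about the identity element.
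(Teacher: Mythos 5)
Your proposal is correct and follows essentially the same route as the paper: the paper derives this corollary directly from Nikulin's rank bound (no nontrivial symplectic automorphisms when $\rank(N) < 9$) combined with Corollary \ref{cor:rephrase}, which is exactly the chain you unwind back through Theorem \ref{thm:pss} and Proposition \ref{prop:monodromy}. Your extra care about the identity automorphism and the explicit choice $G = \alpha_N(\Image \rho_{\mathcal{N}})$ are sensible refinements of the same argument, not a different approach.
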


We end this subsection with a proposition which determines when a symplectic automorphism on a single K3 surface extends to an automorphism on an entire family of K3 surfaces. This will be useful in Section \ref{section:siif}, when we will further discuss symplectic automorphisms in families. 

\begin{proposition}\label{prop:sympaut}
Let $X_{p}$ be a fibre in $\mathcal{X}$ which satisfies $\mathcal{NS}(\mathcal{X})_{p} \cong \NS(X_{p})$, and let $\tau$ be a symplectic automorphism of $X_{p}$. Then $\tau$ extends to an automorphism of $\mathcal{X}$ if and only if its action on $\NS(X_{p})$ commutes with the image of $\rho_{\mathcal{X}}$.
\end{proposition}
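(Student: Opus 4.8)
The plan is to prove the two implications separately; the forward direction is formal, while the reverse direction (commuting $\Rightarrow$ extends) carries all of the content and rests on the Global Torelli theorem.

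For the forward direction, suppose $\tau$ extends to an automorphism $T\colon \calX \to \calX$ with $\pi \circ T = \pi$ and $T|_{X_p} = \tau$. Then $T$ induces an automorphism $T^*$ of the local system $R^2\pi_*\mathbb{Z}$, that is, a morphism of local systems. Any morphism of local systems is flat for the Gauss--Manin connection and hence commutes with parallel transport; restricting to the fibre over $p$ this says precisely that $\tau^* = T^*|_{X_p}$ commutes with $\rho_{\mathcal{X}}(\gamma)$ for every $\gamma \in \pi_1(U,p)$. As both $\tau^*$ and the monodromy preserve $\NS(X_p)$, the induced action of $\tau$ on $\NS(X_p)$ commutes with the image of $\rho_{\mathcal{X}}$.

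For the reverse direction, assume the action of $\tau$ on $\NS(X_p)$ commutes with the image of $\rho_{\mathcal{X}}$. First I would upgrade this to commutativity on all of $H^2(X_p,\mathbb{Z})$: recalling (Nikulin, \cite{fagkk3s}) that a symplectic automorphism acts as the identity on $\T(X_p)$, the map $\tau^*$ trivially commutes with $\rho_{\mathcal{X}}(\gamma)|_{\T(X_p)}$, and since the monodromy preserves the summands of $H^2(X_p,\mathbb{Q}) = \NS(X_p)_{\mathbb{Q}} \oplus \T(X_p)_{\mathbb{Q}}$ and the two actions commute on each, they commute on $H^2(X_p,\mathbb{Z})$. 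An isometry of the fibre over $p$ commuting with the monodromy extends by parallel transport to a flat automorphism $\Phi$ of $R^2\pi_*\mathbb{Z}$ restricting to $\tau^*$ at $p$; as $\tau^*$ has finite order, so does $\Phi$. Because $\Phi$ is the flat extension of $\tau^*$ and $\tau^*$ is the identity on $\T(X_p)$, the restriction of $\Phi$ to $\mathcal{T}(\mathcal{X})$ is the identity, so $\Phi_q$ fixes the period $\omega_{\mathcal{X}}$ (a section of $\mathcal{T}(\mathcal{X}) \otimes \mathbb{C}$) at every $q$ and is therefore a Hodge isometry on each fibre. I would then apply the Global Torelli theorem \cite{ttastk3} fibrewise: whenever $\Phi_q$ is moreover \emph{effective}, i.e. maps the K\"ahler cone $K(X_q)$ to itself, it is induced by a unique automorphism $\tau_q$ of $X_q$, and uniqueness forces the $\tau_q$ to patch into a single morphism $T\colon \calX \to \calX$ over $U$ with $T|_{X_p} = \tau$.

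I expect the main obstacle to be the effectivity of each $\Phi_q$ (and the attendant gluing). To address it I would produce a $\Phi$-invariant relatively ample class: writing $m$ for the order of $\tau$, the section $h = \sum_{i=0}^{m-1}\Phi^i[\calL]$ is a single-valued global flat section of $\mathcal{NS}(\mathcal{X})$ (single-valued because $[\calL]$ is monodromy-invariant and $\tau^*$ commutes with the monodromy), is fixed by $\Phi$, and is ample on $X_p$. Granting that $h_q$ stays ample on every fibre, the decomposition $\mathrm{O}_+(\NS(X_q)) = D_{X_q} \ltimes W_{X_q}$ closes the argument: $\Phi_q$ lies in $\mathrm{O}_+$ (this sign condition is locally constant on the connected base and correct at $p$) and fixes the interior point $h_q$ of $K(X_q)$, so the chamber $\Phi_q(K(X_q))$ meets $K(X_q)$ and hence equals it, whence $\Phi_q \in D_{X_q}$ is effective. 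The genuinely delicate points, which I would treat with care, are to verify that $h_q$ does not drift onto a wall of the chamber structure at the special fibres where the Picard rank jumps and new $(-2)$-classes appear, and to promote the fibrewise automorphisms $\tau_q$ to a morphism of the total space, for instance by invoking the representability of the relative automorphism scheme of the projective family $\pi$.
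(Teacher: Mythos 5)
Your forward direction is fine, and your strategy for the converse --- extend $\tau^*$ to a flat automorphism $\Phi$ of $R^2\pi_*\Z$ using the commutation hypothesis (after upgrading it to all of $H^2(X_p,\Z)$ via triviality of the symplectic action on $\T(X_p)$), check that each $\Phi_q$ is a Hodge isometry, and integrate fibrewise by Torelli --- is a genuinely different route from the paper's. The paper instead extends $\tau$ itself over a small simply connected neighbourhood $U_0$ of $p$ (Ehresmann plus Nikulin's Lemma 4.2 plus Global Torelli), analytically continues it along paths in $U$, and observes that continuation along a loop $\gamma$ acts on cohomology as $\rho_{\mathcal{X}}(\gamma)\cdot\tau^*\cdot\rho_{\mathcal{X}}(\gamma)^{-1}$; single-valuedness of the continuation is then exactly your commutation condition, again by Torelli. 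The payoff of that route is that effectivity only ever has to be checked on fibres near the good point $p$; your route forces you to verify effectivity of $\Phi_q$ on \emph{every} fibre, including the special ones, and that is where your proof has a genuine unfilled gap.

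Concretely: you need $h_q=\sum_i\Phi_q^i[\calL_q]$ to lie in the \emph{interior} of $K(X_q)$ for all $q$. What your argument actually delivers is only that each $\Phi_q^i[\calL_q]$ lies in the positive cone (the component is preserved by parallel transport and these classes are ample at $p$), so $h_q$ lies in the positive cone; but at a special fibre acquiring a new effective $(-2)$-class $\delta$ the sign of $\langle h_q,\delta\rangle=\sum_i\langle[\calL_q],\Phi_q^{-i}\delta\rangle$ is not controlled. Each $\Phi_q^{-i}\delta$ is $\pm$effective by Riemann--Roch, but deciding which sign occurs is equivalent to knowing that $\Phi_q$ preserves the set of effective $(-2)$-classes --- which is precisely the effectivity you are trying to establish, so the argument is circular at exactly the point you flag as delicate. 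Two secondary problems: the parenthetical ``as $\tau^*$ has finite order'' is unjustified, since Definition \ref{def:sympaut} does not require finite order and infinite-order symplectic automorphisms exist (e.g.\ translation by a non-torsion section of an elliptic K3), so the averaged class $h$ need not be available; and the final appeal to the relative automorphism scheme requires knowing that the fibrewise $\tau_q$ vary holomorphically, which is most naturally obtained by the local-extension-and-continuation argument the paper uses. I would restructure along the paper's lines: prove local extension near $p$ once, and reduce the global statement to the identity $\gamma^*\tau=\rho_{\mathcal{X}}(\gamma)\cdot\tau\cdot\rho_{\mathcal{X}}(\gamma)^{-1}$ together with injectivity of the Torelli correspondence.
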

\begin{proof}
Since $\mathcal{X}$ is a proper family of smooth manifolds, Ehresmann's theorem (see, for example, \cite[Section 9.1.1]{htcagI}) implies that there is a local analytic open subset, called $U_0$, about $p \in U$, so that there is a marking on the family of K3 surfaces $\mathcal{X}_{U_0}$ on $U_0$. Therefore \cite[Lemma 4.2]{fagkk3s} and the Global Torelli Theorem \cite[Theorem 2.7']{fagkk3s} shows that $\tau$ extends uniquely to an automorphism on $\mathcal{X}_{U_0}$.

Let $\gamma \in \pi_1(U,p)$, let $\gamma^*\tau$ be the analytic continuation of $\tau$ along $\gamma$, and let $w \in H^2(X_0,\mathbb{Z})$. Then it is easy to see that 
$$
\gamma^* \tau(w) = \rho_{\mathcal{X}}(\gamma)\cdot \tau \cdot (\rho_{\mathcal{X}}(\gamma))^{-1}(w).
$$
Therefore, the action of $\tau$ on $\NS(X_{p})$ commutes with the image of $\rho_{\mathcal{X}}$ if and only if the action of $\gamma^*\tau$ on $\NS(X_p)$ agrees with the action of $\tau$. By the Global Torelli Theorem, this happens if and only if the automorphisms $\tau$ and $\gamma^*\tau$ are the same.
\end{proof}

\begin{corollary}
\label{cor:sympaut}
Let $\mathcal{X} \to U$ be an $N$-polarized family of K3 surfaces and suppose $N \cong \NS(X_p)$ for some fibre $X_p$. If $X_p$ admits a symplectic automorphism $\tau$, then $\tau$ extends to an automorphism of $\mathcal{X}$.
\end{corollary}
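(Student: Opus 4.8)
The plan is to deduce the statement directly from Proposition \ref{prop:sympaut}, whose criterion for extendability is that the action of $\tau$ on $\NS(X_p)$ commute with the image of $\rho_{\mathcal{X}}$. The entire content of the corollary is that, for an $N$-polarized family with $N \cong \NS(X_p)$, this commutation condition holds automatically. Accordingly, there are only two things to check: first, that the hypothesis of Proposition \ref{prop:sympaut} is satisfied at the fibre $X_p$, and second, that the monodromy acts trivially enough on $\NS(X_p)$ for the commutation to be immediate.

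First I would verify that $X_p$ satisfies the hypothesis $\mathcal{NS}(\mathcal{X})_p \cong \NS(X_p)$ of Proposition \ref{prop:sympaut}. Being an $N$-polarized family, $\mathcal{X}$ carries a local subsystem $\mathcal{N} \subseteq \mathcal{NS}(\mathcal{X})$ with $\mathcal{N}_p \cong N$ embedded primitively in $H^2(X_p,\mathbb{Z})$. Since we assume $N \cong \NS(X_p)$, the chain of primitive embeddings $\mathcal{N}_p \subseteq \mathcal{NS}(\mathcal{X})_p \subseteq \NS(X_p)$ relates lattices all of rank $\rank N$, and so each inclusion is an equality; in particular $\mathcal{N}_p = \mathcal{NS}(\mathcal{X})_p = \NS(X_p)$, and the hypothesis of the proposition holds. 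Next, because $\mathcal{X}$ is $N$-polarized the local system $\mathcal{N}$ is trivial by Definition \ref{def:polarized}, so $\rho_{\mathcal{N}}$ has image $\Id$; combined with the identification $\mathcal{N}_p = \mathcal{NS}(\mathcal{X})_p$ this gives $\rho_{\mathcal{NS}}(\gamma) = \Id$ for every $\gamma \in \pi_1(U,p)$, i.e.\ the monodromy restricts to the identity on $\NS(X_p)$.

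Finally I would feed this into Proposition \ref{prop:sympaut}. Recall that $\rho_{\mathcal{X}}$ restricts to the representation $\rho_{\mathcal{NS}}$ on $\NS(X_p)$, which we have just shown is trivial. Hence the operator $\tau|_{\NS(X_p)}$ commutes with $\rho_{\mathcal{X}}(\gamma)|_{\NS(X_p)} = \Id$ for all $\gamma$, so the commutation condition of Proposition \ref{prop:sympaut} is satisfied, and $\tau$ extends to an automorphism of $\mathcal{X}$. I expect no genuine obstacle here: the substantive analytic input, namely Ehresmann's theorem and the Global Torelli Theorem, has already been expended in proving Proposition \ref{prop:sympaut}. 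The only points demanding care are the lattice bookkeeping that collapses the tower $\mathcal{N}_p \subseteq \mathcal{NS}(\mathcal{X})_p \subseteq \NS(X_p)$ to equalities, so that triviality of $\mathcal{N}$ genuinely forces $\rho_{\mathcal{NS}}$ to be trivial, and the correct reading of the proposition's commutation condition as a statement about the restriction of $\rho_{\mathcal{X}}$ to $\NS(X_p)$.
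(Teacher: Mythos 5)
Your argument is correct and is exactly the (implicit) derivation the paper intends: the paper states Corollary \ref{cor:sympaut} without proof as an immediate consequence of Proposition \ref{prop:sympaut}, the point being precisely that triviality of $\mathcal{N}$ together with $\mathcal{N}_p = \mathcal{NS}(\mathcal{X})_p = \NS(X_p)$ forces $\rho_{\mathcal{NS}}$ to be trivial, so the commutation condition holds vacuously. Your extra care with the lattice bookkeeping (primitivity plus equal ranks collapsing the chain of inclusions) is a correct and worthwhile elaboration of what the paper leaves unsaid.
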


\subsection{A non-polarizable example}
\label{section:example}
As we have seen, algebraic monodromy of families of K3 surfaces is intimately related to the existence of symplectic automorphisms. In this section, we will give a simple example which will show how the existence of symplectic automorphisms produces non-polarized families of K3 surfaces.

Let us take the pencil of K3 surfaces mirror (in the sense of \cite{mslpk3s}) to the Fermat pencil of quartics in $\mathbb{P}^3$. We may write these surfaces as a family $\mathcal{X}$ of ADE singular hypersurfaces in $\mathbb{P}^3$:
$$
(x + y + z + w)^4 + t^2 xyzw = 0.
$$
As a non-compact threefold, we may express these as a singular subvariety of 
$$
[x:y:z:w] \times t \in \mathbb{P}^3 \times \mathbb{C}^\times.
$$
This is an $(E_8^2 \oplus H \oplus \langle -4\rangle,\Id)$-polarized family of K3 surfaces. Each fibre admits $A_4$ as a group of symplectic automorphisms acting via even permutations on the coordinates $x,y,z,w$. In particular we have a symplectic involution on each fibre induced by
$$
\sigma\colon [x:y:z:w] \mapsto [y:x:w:z],
$$
which extends to $\mathcal{X}$ by Corollary \ref{cor:sympaut}. We also have an involution on the base, acting via
$$
\eta\colon t \mapsto -t.
$$
Therefore, the fibrewise resolutions of the quotient families $\mathcal{Y}_1 = \widetilde{\mathcal{X}/(\Id \times \eta)}$ and $\mathcal{Y}_2 = \widetilde{\mathcal{X} / (\sigma \times \eta)}$ are fibrewise biregular, but are not biregular as total spaces. More importantly both families have the same holomorphic periods, but the monodromy of $\mathcal{NS}(\mathcal{Y}_1)$ is trivial and the monodromy of $\mathcal{NS}(\mathcal{Y}_2)$ is non-trivial around $0$.

Thus we see that the family $\mathcal{Y}_1$ is $N$-polarized. However, by Corollary \ref{cor:rephrase}, the family $\mathcal{Y}_2$ is not $(N,G)$-polarized for any $G$ since, by construction, monodromy around $0$ acts as a Nikulin involution on $\mathcal{NS}(\mathcal{Y}_2)$.

\begin{remark}
Of course this examples and examples like it reflect directly the general principle that there does not exist a fine moduli scheme of objects which admit automorphisms, and in particular this example itself proves that the period space of K3 surfaces is not a fine moduli space. If one considers instead the moduli stack of polarized K3 surfaces (see \cite{mspk3smc}), then such families are distinguished.
\end{remark}

\subsection{Moduli spaces and period maps}
In the last subsection of this section, we will study the moduli of $(N,G)$-polarized families. We begin by establishing some definitions regarding the period spaces of K3 surfaces; much of this material may be found in greater detail in \cite{mslpk3s}. 

Define the K3 lattice to be the lattice $\Lambda_{\mathrm{K3}} = E_8^2 \oplus H^3$. The space of marked pseudo-ample K3 surfaces is the type IV symmetric domain 
$$
\mathcal{P}_{\mathrm{K3}} = \{ z \in \mathbb{P}(\Lambda_{\mathrm{K3}} \otimes \mathbb{C}) : \langle z,z \rangle = 0, \langle z,\overline{z} \rangle >0 \}.
$$
There is a natural action on $\mathcal{P}_{\mathrm{K3}}$ by the group $\mathrm{O}(\Lambda_{\mathrm{K3}})$. Using terminology of \cite{mslpk3s}, the orbifold quotient
$$
\mathcal{M}_{\mathrm{K3}} := \mathrm{O}(\Lambda_{\mathrm{K3}}) \setminus \mathcal{P}_{\mathrm{K3}}
$$
is called the \emph{period space of K\"ahler K3 surfaces}. 

For any even lattice $N$ of rank $n$ and signature $(1,n-1)$ equipped with a primitive embedding $N \hookrightarrow \Lambda_{K3}$, one may construct a period space of pseudo-ample marked K3 surfaces with $N$-polarization. Let
$$
\mathcal{P}_{N} = \{ z \in \mathbb{P}(N^\perp \otimes \mathbb{C}) : \langle z,z \rangle = 0, \langle z,\overline{z} \rangle >0 \}.
$$
There is a natural embedding
$$
\varphi_{N} \colon \mathcal{P}_N \hookrightarrow \mathcal{P}_{\mathrm{K3}}
$$
where we suppress the dependence upon choice of embedding of $N$ into $\Lambda_{\mathrm{K3}}$. Let
$$
\mathrm{O}(N^\perp) = \{ \gamma|_{N^\perp} : \gamma \in \mathrm{O}(\Lambda_{\mathrm{K3}}) , \gamma(N) \subseteq N \}.
$$
The map $\varphi_{N}$ descends to an embedding
$$
\overline{\varphi_{N}} \colon \mathrm{O}(N^\perp) \setminus \mathcal{P}_N \hookrightarrow \mathrm{O}(\Lambda_{\mathrm{K3}}) \setminus \mathcal{P}_{\mathrm{K3}}.
$$

For each group $G_{N^{\perp}}$ in $\Aut(A_{N^\perp})$, we may construct a finite index subgroup of $ \mathrm{O}(N^\perp)$,
$$
\mathrm{O}(N^\perp,G_{N^{\perp}}) = \{ \gamma|_{N^\perp} \in \mathrm{O}(N^\perp): \alpha_{N^\perp}(\gamma|_{N^\perp}) \in G_{N^{\perp}} \}.
$$
This subgroup is related to $(N,G_N)$-polarized K3 surfaces in the following way. Recall the following standard lattice theoretic fact from \cite{isbfa}.
\begin{proposition}\textup{\cite[Proposition 1.6.1]{isbfa}} \label{prop:Nikulin-Lattices}
Let $N$ be a primitive sublattice of an even unimodular lattice $K$, and let $N^\perp$ be the orthogonal complement of $N$ in $K$. Then
\begin{enumerate}[\textup{(}1\textup{)}]
\item There is a canonical isomorphism $\phi^N$ between the underlying groups $A_N$ and $A_{N^\perp}$ which satisfies 
$$
b_N(a,b) = -b_{N^\perp}(\phi^N(a),\phi^N(b)).
$$
\item If $g$ is an automorphism of $N$ and $g'$ is an automorphism of $N^\perp$, then $g \oplus g'$ is an automorphism of $N \oplus N^\perp$ which extends to an automorphism of $K$ if and only if the induced actions of $g$ on $A_N$ and of $g'$ on $A_{N^\perp}$ are the same under the identification $\phi^N$.
\end{enumerate}
\end{proposition}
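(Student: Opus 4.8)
The plan is to identify both discriminant groups $A_N$ and $A_{N^\perp}$ with a single auxiliary object, namely the finite ``glue group'' $H := K/(N \oplus N^\perp)$, and to read off all of the assertions from this identification. First I would introduce the orthogonal projection $\pi_N \colon K \to N \otimes_\Z \Q$ characterised by $\langle \pi_N(x), v \rangle = \langle x, v\rangle$ for all $v \in N$, together with its counterpart $\pi_{N^\perp}$. Because $K$ is even unimodular we may identify $K$ with $K^*$, and restriction of functionals gives a map $K \cong K^* \to N^*$ that coincides with $\pi_N$; its kernel is exactly $N^\perp$. The key input is that this map is \emph{surjective}: since $N$ is primitive, $K/N$ is torsion-free, so the sequence $0 \to N \to K \to K/N \to 0$ splits and $K^* \to N^*$ is onto. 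This yields a canonical isomorphism $K/N^\perp \cong N^*$ carrying the image of $N$ to $N \subseteq N^*$, and hence $A_N = N^*/N \cong K/(N \oplus N^\perp) = H$; the symmetric argument gives $A_{N^\perp} \cong H$. Composing these two isomorphisms defines $\phi^N \colon A_N \to A_{N^\perp}$, proving the group-theoretic half of (1).

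For the form relation in (1), I would represent a class in $H$ by an element $x \in K$, so that $a := \pi_N(x) \bmod N$ and $\phi^N(a) = \pi_{N^\perp}(x) \bmod N^\perp$, and similarly for a second element $y$ giving $b$. Using the orthogonal decomposition $x = \pi_N(x) + \pi_{N^\perp}(x)$ over $\Q$ and the vanishing of the cross terms, one has $\langle x, y\rangle = \langle \pi_N(x), \pi_N(y)\rangle + \langle \pi_{N^\perp}(x), \pi_{N^\perp}(y)\rangle$. Since $x, y \in K$ the left side lies in $\Z$, so reducing modulo $\Z$ gives $b_N(a,b) + b_{N^\perp}(\phi^N(a), \phi^N(b)) \equiv 0$, which is precisely $b_N(a,b) = -b_{N^\perp}(\phi^N(a), \phi^N(b))$.

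For (2), the plan is to exhibit $H$ inside $A_N \oplus A_{N^\perp}$ concretely as the graph $\{(a, \phi^N(a)) : a \in A_N\}$, which is immediate from the description $x \mapsto (\pi_N(x) \bmod N, \pi_{N^\perp}(x) \bmod N^\perp)$. An isometry $g \oplus g'$ of $N \oplus N^\perp$ extends to $K$ if and only if, regarded as an automorphism of $(N \oplus N^\perp) \otimes \Q = K \otimes \Q$, it preserves the lattice $K$; equivalently, the induced automorphism $\bar g \oplus \bar g'$ of $A_N \oplus A_{N^\perp}$ must preserve $H$. Tracking $(a, \phi^N(a)) \mapsto (\bar g\, a, \bar g'\, \phi^N(a))$, I would observe that this lands back in the graph exactly when $\bar g' \circ \phi^N = \phi^N \circ \bar g$, i.e. when the induced actions of $g$ on $A_N$ and of $g'$ on $A_{N^\perp}$ agree under $\phi^N$. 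For the converse implication one checks that if $g \oplus g'$ already extends to some $\tilde g \in \mathrm{O}(K)$, then $\pi_N \circ \tilde g = g \circ \pi_N$ (and likewise on $N^\perp$), so $\tilde g$ induces a single automorphism of $H$ whose two descriptions through $A_N$ and $A_{N^\perp}$ necessarily match under $\phi^N$.

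I expect the only genuinely non-formal step to be the surjectivity of $\pi_N \colon K \to N^*$, where the hypotheses of primitivity of $N$ and unimodularity of $K$ must both be used; once the canonical chain $A_N \cong H \cong A_{N^\perp}$ is in place, parts (1) and (2) reduce to the bookkeeping sketched above.
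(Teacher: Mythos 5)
Your proof is correct. Note that the paper does not prove this statement at all --- it is quoted verbatim from Nikulin \cite[Proposition 1.6.1]{isbfa} --- and your argument via the glue group $H = K/(N \oplus N^\perp)$, realised inside $A_N \oplus A_{N^\perp}$ as the graph of an anti-isometry, is precisely Nikulin's standard proof: the surjectivity of $K \cong K^* \to N^*$ from primitivity and unimodularity is indeed the one non-formal input, and parts (1) and (2) then follow by the bookkeeping you describe. The only implicit hypothesis worth acknowledging is that $N$ must be non-degenerate for $\pi_N$ and $A_N$ to make sense, which holds in all uses in the paper.
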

Therefore, if a family of K3 surfaces $\mathcal{X}$ is $(N,G_N)$-polarized, then Proposition \ref{prop:Nikulin-Lattices} shows that the transcendental monodromy of $\mathcal{X}$ is in $\mathrm{O}(N^\perp,G_{N^{\perp}})$ where $G_{N^{\perp}}$ is the subgroup of $A_{N^\perp}$ identified with $G_N$ by $\phi^N$.

As a particular example, if $\Id $ is the trivial subgroup of $G_N$ then the family $\calX$ is $N$-polarized and the group $\mathrm{O}(N^\perp,\Id)$ corresponds to the group $\mathrm{O}(N^\perp)^*$. By \cite[Proposition 3.3]{mslpk3s}, we have
$$
\mathrm{O}(N^\perp,\Id) = \mathrm{O}(N^\perp)^* \cong \{ \gamma|_{N^\perp} : \gamma \in \mathrm{O}(\Lambda_{\mathrm{K3}}), \gamma(w) = w\ \mathrm{for}\ \mathrm{all}\ w \in N \} .
$$
In the case where our family is $N$-polarized we will use the notation and language of \cite{mslpk3s}, but adopt the notation introduced above when the group $G_N$ becomes relevant. 

In \cite{mslpk3s}, the space
$$
\mathcal{M}_N = \mathrm{O}(N^\perp)^* \setminus \mathcal{P}_N
$$
is called the \emph{period space of pseudo-ample $N$-polarized K3 surfaces}. Dolgachev  \cite[Remark 3.4]{mslpk3s} shows that for any $N$-polarized family of K3 surfaces $\pi\colon\mathcal{X} \rightarrow U$, there is a period morphism
$$
\Phi_\mathcal{X} \colon U \rightarrow \mathcal{M}_N.
$$

In light of this, define 
$$
\mathcal{M}_{(N,G_N)} := \mathrm{O}(N^\perp,G_{N^{\perp}}) \setminus \mathcal{P}_N.
$$
Note that for $G_N \subseteq G'_N$, there is a natural inclusion $\mathrm{O}(N^\perp,G_{N^{\perp}}) \subseteq \mathrm{O}(N^\perp,G'_{N^{\perp}})$ and therefore there are natural surjective morphisms
$$
\mathcal{M}_{(N,G_N)} \rightarrow \mathcal{M}_{(N,G'_N)}
$$
of degree $[G_N:G'_N]$.

We now take some time to prove the existence of period morphisms associated to the spaces $\mathcal{M}_{(N,G_N)}$.

\begin{theorem} \label{thm:periods}
Let $\calX \to U$ be a family of K3 surfaces. If there is some local subsystem $\mathcal{N} \subseteq \mathcal{NS}(\mathcal{X})$, where $\mathcal{N}$ is fibrewise isomorphic to a lattice $N$ of signature $(1,n-1)$ and $\alpha_N \cdot \rho_{\mathcal{NS}}$ is contained inside of a subgroup $G_N$ of $\Aut(A_N)$, then there a period morphism
$$
\Phi_{(N,G_N)} \colon U \rightarrow \mathcal{M}_{(N,G_N)}.
$$
\end{theorem}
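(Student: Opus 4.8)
The plan is to construct the period map on the universal cover $\widetilde{U} \to U$ and then to check that it descends to $\mathcal{M}_{(N,G_N)}$. Passing to $\widetilde{U}$ trivialises the local system $R^2\pi_*\mathbb{Z}$, so after choosing a marking $H^2(X_p,\mathbb{Z}) \cong \Lambda_{\mathrm{K3}}$ at the base point and transporting it flatly, every fibre is identified with $\Lambda_{\mathrm{K3}}$. Having fixed the primitive embedding $N \hookrightarrow \Lambda_{\mathrm{K3}}$ used to define $\mathcal{P}_N$, I would use the theory of primitive embeddings \cite{isbfa} to arrange this marking so that the subsystem $\mathcal{N}$ is carried onto that fixed copy of $N$. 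Sending each point of $\widetilde{U}$ to the Hodge line $[\omega_{\mathcal{X}}] \in \mathbb{P}(\Lambda_{\mathrm{K3}} \otimes \mathbb{C})$ then defines a holomorphic map $\widetilde{\Phi}\colon \widetilde{U} \to \mathcal{P}_{\mathrm{K3}}$; since the holomorphic two-form is orthogonal to $\NS(X_p) \supseteq \mathcal{N}_p = N$, its image lands in $\mathbb{P}(N^\perp \otimes \mathbb{C})$, hence in $\mathcal{P}_N \subseteq \mathcal{P}_{\mathrm{K3}}$. Holomorphicity is the standard variation-of-Hodge-structure statement that $\omega_{\mathcal{X}}$ varies holomorphically.

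Next I would record the equivariance of $\widetilde{\Phi}$ under the deck group. For $\gamma \in \pi_1(U,p)$ the associated deck transformation satisfies $\widetilde{\Phi}(\gamma \cdot \tilde p) = \rho_{\mathcal{X}}(\gamma) \cdot \widetilde{\Phi}(\tilde p)$, where $\rho_{\mathcal{X}}(\gamma) \in \mathrm{O}(\Lambda_{\mathrm{K3}})$ acts on $\mathcal{P}_{\mathrm{K3}}$ in the usual way. Because $\mathcal{N}$ is a local subsystem, $\rho_{\mathcal{X}}(\gamma)$ preserves $N$ as a sublattice of $\Lambda_{\mathrm{K3}}$, and therefore also preserves $N^\perp$; restricting gives an element $\rho_{\mathcal{X}}(\gamma)|_{N^\perp} \in \mathrm{O}(N^\perp)$, and this restricted action is precisely how $\gamma$ moves points of $\mathcal{P}_N$.

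The crux of the argument is to show that each such restriction in fact lies in the finite-index subgroup $\mathrm{O}(N^\perp, G_{N^\perp})$. Write $g = \rho_{\mathcal{X}}(\gamma)|_N = \rho_{\mathcal{N}}(\gamma)$, the monodromy on the subsystem $\mathcal{N}$ (the restriction of $\rho_{\mathcal{NS}}$), and write $g' = \rho_{\mathcal{X}}(\gamma)|_{N^\perp}$. Since $g \oplus g'$ is the restriction of the global isometry $\rho_{\mathcal{X}}(\gamma)$ of the even unimodular lattice $\Lambda_{\mathrm{K3}}$, Proposition \ref{prop:Nikulin-Lattices}(2) applies: the action induced by $g$ on $A_N$ and that induced by $g'$ on $A_{N^\perp}$ coincide under the canonical isomorphism $\phi^N$, that is, $\alpha_{N^\perp}(g') = \phi^N \circ \alpha_N(g) \circ (\phi^N)^{-1}$. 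The hypothesis gives $\alpha_N(g) = \alpha_N(\rho_{\mathcal{N}}(\gamma)) \in G_N$, and $G_{N^\perp}$ is by definition the image of $G_N$ under $\phi^N$; hence $\alpha_{N^\perp}(g') \in G_{N^\perp}$, which is exactly the condition $g' \in \mathrm{O}(N^\perp, G_{N^\perp})$. This step carries the real content, although almost all of the work is already packaged into Proposition \ref{prop:Nikulin-Lattices}.

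Finally I would descend. The composite $\widetilde{U} \xrightarrow{\widetilde{\Phi}} \mathcal{P}_N \to \mathrm{O}(N^\perp,G_{N^\perp}) \setminus \mathcal{P}_N = \mathcal{M}_{(N,G_N)}$ is constant on $\pi_1(U,p)$-orbits, since by the previous paragraph every deck transformation moves $\widetilde{\Phi}$ by an element of $\mathrm{O}(N^\perp, G_{N^\perp})$. It therefore factors through the quotient $U = \pi_1(U,p) \setminus \widetilde{U}$, yielding a holomorphic map $\Phi_{(N,G_N)}\colon U \to \mathcal{M}_{(N,G_N)}$; that this is a morphism of the (orbifold) quotient follows exactly as in Dolgachev's construction of $\Phi_{\mathcal{X}}$ for $\mathcal{M}_N$ (\cite[Remark 3.4]{mslpk3s}), to which it reduces when $G_N = \Id$. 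The only genuinely delicate point is the compatibility of discriminant-form actions in the third paragraph; the remaining steps are the standard period-map formalism.
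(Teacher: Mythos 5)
Your proposal is correct and follows essentially the same route as the paper's proof: pass to the universal cover to get a map to $\mathcal{P}_N$, use Proposition \ref{prop:Nikulin-Lattices} to transfer the hypothesis $\alpha_N\cdot\rho_{\mathcal{N}}\subseteq G_N$ into the statement that the transcendental monodromy lies in $\mathrm{O}(N^\perp,G_{N^\perp})$, and then descend to the quotient $\mathcal{M}_{(N,G_N)}$. You simply make explicit several steps (the choice of marking, the deck-group equivariance, and the descent) that the paper leaves implicit.
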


\begin{proof}
Let $\widetilde{U}$ be the simply connected universal covering space of $U$ and $g\colon\widetilde{U} \rightarrow U$ be the canonically associated covering map. Then, since $g^*\mathcal{X}$ is marked, pseudo-ample and $N$-polarized, we have the following diagram
\[\xymatrix{
\widetilde{U} \ar[r] \ar[d]_{g} & \mathcal{P}_N \\
U & \\
}\]

Now we apply Proposition \ref{prop:Nikulin-Lattices}. Since the image of $\alpha_N \cdot \rho_{\mathcal{N}}$ is in $G_N$, the image of $\alpha_{N^\perp} \cdot \rho_{\mathcal{N}^\perp}$ is contained in $G_{N^{\perp}}$ under the identification induced by $\phi^N$. Thus $\rho_{N^\perp}$ is contained in $\mathrm{O}(N^\perp,G_{N^{\perp}})$. 

This allows us to canonically complete the diagram above to a commutative square
\[\xymatrixcolsep{3pc}\xymatrix{
\widetilde{U} \ar[r] \ar[d]_{g} & \mathcal{P}_N \ar[d] \\
U \ar[r]^-{\Phi_{(N,G_N)}} & \mathcal{M}_{(N,G_N)} \\
}\]
as required.
\end{proof}

We note that the assumptions in this proposition are weaker than the assumption that $\calX \to U$ is $(N,G_N)$-polarized, as we do not assume here that the map  $\alpha_N$ is injective on the image of $\rho_{\mathcal{NS}}$. What distinguishes $(N,G_N)$-polarized families of K3 surfaces from the rest is the following observation.

\begin{remark}\label{rmk:alg-trans dictionary}
Let $\mathcal{X} \rightarrow D^*$ be an $(N,G_N)$-polarized family of K3 surfaces over the punctured disc $D^*$, and let $\gamma$ be a generator of $\pi_1(D^*,p)$ and $u \in N \subseteq \NS(X_p)$ with $\overline{u}$ its image in $A_N$. Then under the identification $\phi^N$ defined in the proof of Theorem \ref{thm:periods},
$$
\alpha_{N^\perp}(\rho_{\mathcal{N}^\perp}(\gamma))(\phi^N(\overline{u})) = \phi^N(\alpha_N ( \rho_{\mathcal{N}}(\gamma))(\overline{u})).
$$
Since $\alpha_N$ is an injection and $\phi^N$ is an isomorphism, we see that, for an $(N,G_N)$-polarized family, all data about algebraic monodromy of $\mathcal{N}$ is captured by the monodromy of $\mathcal{N}^\perp$.
\end{remark}

This remark will be essential for the calculations that we will do in Section \ref{genconst}.

\section{Symplectic automorphisms in families} \label{section:siif}

In this section, we expand upon Proposition \ref{prop:sympaut} in the case where $\tau$ is a Nikulin involution. The main result is Theorem \ref{thm:involutions}, which will be used in Section \ref{genconst} to study lattice polarized families of K3 surfaces with Shioda-Inose structure, in an attempt to understand the relationship between such families and their associated families of abelian surfaces.

\subsection{Symplectic automorphisms and Nikulin involutions}\label{sect:aut}
\label{section:involutions}

We begin with some background on symplectic automorphisms of K3 surfaces. Let $X$ be a K3 surface and let $\omega$ be a non-vanishing holomorphic 2-form on $X$. For any group $\Sigma$ of symplectic automorphisms of $X$, there are two lattices in $H^2(X,\mathbb{Z})$ which may be canonically associated to $\Sigma$. The first is the fixed lattice $H^2(X,\mathbb{Z})^\Sigma$. To derive the second, note that, by assumption, $\Sigma$ fixes $\omega$ and hence, since $\Sigma$ acts as Hodge isometries on $H^2(X,\mathbb{Z})$, we see that $\Sigma$ must preserve the transcendental Hodge structure on $X$. This implies that $\T(X) \subseteq H^2(X,\mathbb{Z})^\Sigma$. So we may define a second lattice
\begin{equation*}\label{eq:SG}
S_{\Sigma,X} := (H^2(X,\mathbb{Z})^\Sigma)^\perp.
\end{equation*}

When the K3 surface $X$ is understood, we will abbreviate this notation to simply $S_\Sigma$. This is appropriate because Nikulin \cite[Theorem 4.7]{fagkk3s} proves that, as an abstract lattice, $S_\Sigma$ depends only upon $\Sigma$.  It follows from the fact that $\T(X)$ is fixed by $\Sigma$ that $S_\Sigma$ is contained in $\NS(X)$. In \cite[Lemma 4.2]{fagkk3s} it is also shown that $S_\Sigma$ is a negative definite lattice and contains no elements of square $(-2)$. 

In \cite[Proposition 7.1]{fagkk3s}, Nikulin determines the lattice $S_\Sigma$ for any abelian group of symplectic automorphisms $\Sigma$. Therefore, since any group contains at least one abelian subgroup, if $X$ admits any nontrivial group $\Sigma$ of symplectic automorphisms, then $S_\Sigma$ contains one of the lattices in \cite[Proposition 7.1]{fagkk3s}. The smallest lattice listed therein is $S_{\mathbb{Z}/2\mathbb{Z}}$, which has rank $8$.

\medskip

In general, symplectic automorphisms have fixed point sets of dimension $0$. The local behaviour of $\Sigma$ about the fixed points determines a quotient singularity in $X/\Sigma$. It is easy to see from the classification of minimal surfaces that the minimal resolution $Y:= \widetilde{X/\Sigma}$ of $X/\Sigma$ is again a K3 surface:  $\sigma^*\omega = \omega$ implies that $\omega$ descends to a non-vanishing holomorphic $2$-form on the quotient surface and the resulting quotient singularities are crepant.

There is a diagram of surfaces
\[\xymatrix{
& \tilde{X} \ar[ld]_{c} \ar[rd]^{q} &  \\
X \ar[rd] & & Y \ar[ld]\\
& X/\Sigma &\\
}\]
where $\tilde{X}$ is the minimal blow up of $X$ on which $\Sigma$ acts equivariantly with the map $c$ and whose quotient $\widetilde{X}/\Sigma$ is $Y$. 

In $\NS(Y)$ there is a lattice $K$ spanned by exceptional classes. The minimal primitive sublattice of $\NS(Y)$ containing $K$ will be called $K_0$. Nikulin \cite[Propositions 7.1 and 10.1]{fagkk3s} shows that $K_0$ and $S_\Sigma$ have the same rank but are, of course, not isomorphic. The map
$$
\theta := q^*c_*\colon K_0^\perp \rightarrow H^2(X,\mathbb{Z})^\Sigma
$$
is an isomorphism over $\mathbb{Q}$ and satisfies
$$
\langle \theta(u),\theta(v)\rangle = |\Sigma|\langle u,v\rangle
$$
for any $u,v \in K_0^\perp$. Therefore there is a linear transformation $g$ over $\mathbb{Q}(\sqrt{|\Sigma|})$ which relates the lattices $H^2(X,\mathbb{Z})^\Sigma$ and $K_0^\perp$; a more precise description of this relationship is given in \cite[Theorem 2.1]{sapgk3s}. 

Since the group $\Sigma$ acts symplectically, for a class $\omega$ spanning $H^{2,0}(Y)$ we have that $\theta(\omega)$ is in $H^{2,0}(X)$, so we see that $\langle \theta(u),\theta(\omega) \rangle = 0$ if and only if $\langle u,\omega\rangle =0$. Thus $\theta(\NS(Y)\cap K_0^\perp) =\NS(X)\cap H^2(X,\mathbb{Z})^\Sigma$. In other words, $\theta(\T(Y)) = \T(X)$.

\subsection{Symplectic quotients and Hodge bundles} \label{sect:sympquot}

If $\mathcal{X}$ is a family of K3 surfaces for which a group of symplectic automorphisms on the fibres extends to a group of automorphisms on the total space, then base-change allows us to relativize the constructions in Section \ref{sect:aut}.

We obtain sheaves of local systems $(R^2\pi_*\mathbb{Z})^\Sigma$ and $\mathcal{S}_\Sigma$ which agree fibrewise with $H^2(X_p,\mathbb{Z})^\Sigma$, and $S_{\Sigma,X_p}$. The Hodge filtration on $R^2\pi_*\mathbb{Z} \otimes \mathcal{O}_U$ restricted to these sub-sheaves produces integral weight $2$ variations of Hodge structure on $U$. 

We wish to compare the variation of Hodge structure on $(R^2\pi^\mathcal{X}_*\mathbb{Z})^\Sigma$ and the variation of Hodge structure on the subsystem of $R^2\pi^\mathcal{Y}_*\mathbb{Z}$ orthogonal to the lattice spanned by exceptional curves in each fibre. Since we deal only with smooth fibrations, the following statements are equivalent to their counterparts for individual K3 surfaces.

\begin{proposition}
\label{prop:hodge}
Let $\mathcal{X} \to U$ be a family of K3 surfaces on which a group $\Sigma$ of symplectic automorphisms acts fibrewise and extends to automorphisms of $\pi^\mathcal{X}\colon \mathcal{X}\rightarrow U$. Let $\pi^\mathcal{Y}\colon \mathcal{Y} \rightarrow U$ be the resolved quotient threefold. Then 
\begin{enumerate}[\textup{(}1\textup{)}]
\item The Hodge bundles $F^2(R^2\pi_*^\mathcal{X} \mathbb{Z}\otimes \mathcal{O}_U)$ and $F^2(R^2\pi_*^\mathcal{Y} \mathbb{Z}\otimes \mathcal{O}_U)$ are isomorphic as complex line bundles on $U$.

\item If we extend scalars to $\mathbb{Q}(\sqrt{|\Sigma|})$, the induced VHS on $(R^2\pi^\mathcal{X}_*\mathbb{Z})^\Sigma$ is isomorphic to a sub-VHS of $R^2\pi_*^\mathcal{Y} \mathbb{Z}$.
\item The transcendental integral variations of Hodge structure $\mathcal{T}(\mathcal{X})$ and $\mathcal{T}(\mathcal{Y})$ are isomorphic over $\mathbb{Q}(\sqrt{|\Sigma|})$.
\end{enumerate}
\end{proposition}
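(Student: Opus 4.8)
The plan is to relativize the pointwise construction of Section~\ref{sect:aut} over the base and then read off each assertion from the fibrewise statements proved there. First I would assemble the relative version of the diagram of surfaces. Since $\Sigma$ extends to automorphisms of $\pi^{\calX}\colon\calX\to U$ and acts on each fibre with isolated fixed points whose number and type are constant in the family (they are determined by $\Sigma$ alone via Nikulin's classification, and the fibration is smooth), the fixed locus $\calX^{\Sigma}$ is finite over $U$. Blowing it up yields a smooth family $\widetilde{\calX}\to U$ on which $\Sigma$ acts equivariantly over $U$, with relative quotient $\pi^{\calY}\colon\calY=\widetilde{\calX}/\Sigma\to U$ and morphisms of families $c\colon\widetilde{\calX}\to\calX$ and $q\colon\widetilde{\calX}\to\calY$ over $U$. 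The exceptional divisors of the resolution assemble into a local subsystem $\mathcal{K}\subseteq R^2\pi_*^{\calY}\Z$, with primitive closure $\mathcal{K}_0$, and the geometric correspondence $q^{*}c_{*}$ induces a morphism $\Theta\colon R^2\pi_*^{\calY}\Z\to R^2\pi_*^{\calX}\Z$ whose restriction to each fibre is the map $\theta$ of Section~\ref{sect:aut}.

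For part~(1), both $F^2(R^2\pi_*^{\calX}\Z\otimes\calO_U)$ and $F^2(R^2\pi_*^{\calY}\Z\otimes\calO_U)$ are line bundles on $U$, since every fibre is a K3 surface with $h^{2,0}=1$. As $\Theta$ is induced by morphisms of complex varieties it respects the Hodge filtration, and on each fibre we have shown that $\theta(\omega_Y)$ is a nonzero element of $H^{2,0}(X)$. Hence $\Theta$ restricts to a morphism $F^2(R^2\pi_*^{\calY}\Z\otimes\calO_U)\to F^2(R^2\pi_*^{\calX}\Z\otimes\calO_U)$ that is an isomorphism on every fibre, and is therefore an isomorphism of line bundles on $U$.

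For parts~(2) and~(3), the only obstruction to $\theta$ being an isometry is the fibrewise relation $\langle\theta(u),\theta(v)\rangle=|\Sigma|\langle u,v\rangle$; accordingly I would pass to $\Q(\sqrt{|\Sigma|})$ and set $g:=|\Sigma|^{-1/2}\,\Theta$, which is the transformation recorded in Section~\ref{sect:aut} and is now an isometry onto its image. Because $c$ and $q$ are morphisms of families over $U$, the maps they induce on the local systems $R^2\pi_*\Z$ commute with the Gauss--Manin connection, so $g$ is horizontal; and since $\theta$ preserves the Hodge decomposition on every fibre, $g$ is a morphism of variations of Hodge structure over $\Q(\sqrt{|\Sigma|})$. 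Fibrewise $\theta$ identifies $K_0^{\perp}$ with $H^2(X,\Z)^{\Sigma}$, so $g$ identifies the sub-VHS $\mathcal{K}_0^{\perp}\otimes\Q(\sqrt{|\Sigma|})$ of $R^2\pi_*^{\calY}\Z$ with $(R^2\pi_*^{\calX}\Z)^{\Sigma}\otimes\Q(\sqrt{|\Sigma|})$, which is exactly~(2). Restricting to the transcendental parts and invoking the fibrewise identity $\theta(\T(Y))=\T(X)$ then gives the isomorphism $\mathcal{T}(\calX)\cong\mathcal{T}(\calY)$ over $\Q(\sqrt{|\Sigma|})$ asserted in~(3).

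I expect the one genuinely non-formal point to be the verification that the fibrewise maps glue to a \emph{horizontal} morphism of local systems, i.e.\ that $\Theta$ (hence $g$) is flat for the Gauss--Manin connection. This rests on the principle that correspondences arising from morphisms of smooth families act on $R^2\pi_*\Z$ compatibly with parallel transport, together with the constancy of the fixed-point data, which is what guarantees both that $\widetilde{\calX}\to U$ is smooth and that the exceptional classes really do form a local system $\mathcal{K}$. Once horizontality is established, the remaining content is the fibrewise lattice theory of Section~\ref{sect:aut} transported across the family, in keeping with the observation that these statements reduce to their counterparts for individual K3 surfaces.
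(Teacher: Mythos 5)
Your proposal is correct and takes essentially the same route as the paper: the paper's own proof is a short remark that the claims are relative versions of the fibrewise statements from Section~\ref{sect:aut} and reduce to them fibre by fibre, and your argument is simply a careful spelling-out of that reduction (relativizing the diagram, packaging the exceptional classes into a local subsystem, and checking that the correspondence $q^*c_*$ is horizontal for Gauss--Manin). No gaps.
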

\begin{proof}

These are relative versions of the discussion in Section \ref{section:involutions}. We use the fact that statements about the local systems $R^2\pi_*^\mathcal{X}\mathbb{Z}$ and $R^2\pi^\mathcal{Y}_*\mathbb{Z}$ reduce to statements on each fibre. The same is true for statements about the Hodge filtrations on $R^2\pi_*^\mathcal{X}\mathbb{Z} \otimes \mathcal{O}_U$ and $R^2\pi_*^\mathcal{Y}\mathbb{Z}\otimes \mathcal{O}_U$. Therefore Proposition \ref{prop:hodge} reduces to the statements in Section \ref{section:involutions}.
\end{proof}

In particular, we can recover from Proposition \ref{prop:hodge}(3) a result of Smith \cite[Theorem 2.12]{pfdefk3s}, that the holomorphic Picard-Fuchs equation of $\mathcal{X}$ agrees with the Picard-Fuchs equation of $\mathcal{Y}$, since Picard-Fuchs equations depend only upon the underlying complex VHS.

A corollary to this is that the transcendental monodromy of $\mathcal{Y}$ can be calculated quite easily from the transcendental monodromy of $\mathcal{X}$. If we let $g$ be the $\mathbb{Q}(\sqrt{|\Sigma|})$-linear map relating the lattices $H^2(X_p,\mathbb{Z})^\Sigma$ and $K_0^{\perp}$
$$
g\colon H^2(X_p,\mathbb{Z})^\Sigma \rightarrow K_0^{\perp}
$$
for a given fibre $X_p$, then 
\begin{equation} \label{eq:quot}
\rho_{H^2(X_p,\mathbb{Z})^\Sigma}(w) = g^{-1} \rho_{K_0^{\perp}}(g \cdot w).
\end{equation}
In particular, we have:

\begin{corollary} \label{cor:hodge}
Let $\mathcal{X}$ be an $N$-polarized family of K3 surfaces and suppose $N \cong \NS(X_p)$ for some fibre $X_p$. Assume that $\mathcal{X}$ admits a group of fibrewise symplectic automorphisms $\Sigma$ and let $\mathcal{Y}$ be the fibrewise resolution of the quotient $\mathcal{X}/\Sigma$. If $K_0^{\perp}$ is the sublattice generated by classes orthogonal to exceptional curves on $Y_p$, then the monodromy representation fixes $K_0^{\perp} \cap \NS(Y_p)$.
\end{corollary}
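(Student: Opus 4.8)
The plan is to deduce the statement from the already-established triviality of monodromy on $\mathcal{NS}(\mathcal{X})$, transporting it across the map $g$ of Equation \eqref{eq:quot}. First I would record the two structural inputs. Since $\mathcal{X}$ is $N$-polarized with $N \cong \NS(X_p)$, the relevant local subsystem is all of $\mathcal{NS}(\mathcal{X})$, which is trivial by Definition \ref{def:polarized}; hence $\rho_{\mathcal{X}}$ acts trivially on the entire lattice $\NS(X_p)$. At the same time, by Corollary \ref{cor:sympaut} every element of $\Sigma$ extends to an automorphism of $\mathcal{X}$, so the relative constructions of Section \ref{sect:sympquot} apply and the resolved quotient $\mathcal{Y}$, together with the local systems $(R^2\pi^{\mathcal{X}}_*\mathbb{Z})^\Sigma$ and the complement of the exceptional classes in $\mathcal{Y}$, are well defined.

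Next I would pass to the fixed part. The intersection $\NS(X_p) \cap H^2(X_p,\mathbb{Z})^\Sigma$ is a sublattice of $\NS(X_p)$, so by the previous step $\rho_{\mathcal{X}}$, and in particular its restriction $\rho_{H^2(X_p,\mathbb{Z})^\Sigma}$, acts trivially on it. On the geometric side, recall from Section \ref{section:involutions} that the map $\theta = q^*c_*$ satisfies $\theta(\T(Y_p)) = \T(X_p)$, equivalently $\theta\bigl(\NS(Y_p) \cap K_0^\perp\bigr) = \NS(X_p) \cap H^2(X_p,\mathbb{Z})^\Sigma$ as $\mathbb{Q}$-vector spaces; since $g$ differs from $\theta^{-1}$ only by the scalar factor $\sqrt{|\Sigma|}$, the map $g$ carries the rational span of $\NS(X_p) \cap H^2(X_p,\mathbb{Z})^\Sigma$ isomorphically onto the rational span of $K_0^\perp \cap \NS(Y_p)$.

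Finally I would transport the triviality through Equation \eqref{eq:quot}, which I would rewrite as the operator identity $\rho_{K_0^\perp} = g \circ \rho_{H^2(X_p,\mathbb{Z})^\Sigma} \circ g^{-1}$ over $\mathbb{Q}(\sqrt{|\Sigma|})$. Conjugating the trivial action on the rational span of $\NS(X_p) \cap H^2(X_p,\mathbb{Z})^\Sigma$ by $g$ then yields a trivial action of $\rho_{K_0^\perp}$ on the rational span of $K_0^\perp \cap \NS(Y_p)$. Since the monodromy representation on $\NS(Y_p)$ agrees with $\rho_{K_0^\perp}$ on $K_0^\perp \cap \NS(Y_p)$ and preserves this integral lattice, and since any integral transformation acting trivially on a rational span fixes the underlying lattice, I conclude that monodromy fixes $K_0^\perp \cap \NS(Y_p)$.

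The step requiring the most care is the interplay between integral and rational structures: the map $g$ is only defined over $\mathbb{Q}(\sqrt{|\Sigma|})$ and does not respect the integral lattices, so the conjugation argument must be carried out on rational spans, with the integrality of the monodromy representation on $\NS(Y_p)$ invoked only at the very end to descend from the rational statement to the desired lattice-theoretic conclusion. I expect the only subtlety to be checking that $\rho_{K_0^\perp}$ and the monodromy on $\NS(Y_p)$ genuinely coincide on the overlap $K_0^\perp \cap \NS(Y_p)$, which follows from both being restrictions of the single monodromy action of $\pi_1(U,p)$ on $H^2(Y_p,\mathbb{Z})$.
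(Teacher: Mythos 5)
Your proposal is correct and follows essentially the same route as the paper's proof: triviality of monodromy on $\NS(X_p)^\Sigma$ coming from the $N$-polarization, transport through the conjugation relation in Equation \eqref{eq:quot}, and the identification of the image of $\NS(X_p)^\Sigma$ under $g$ with $K_0^{\perp} \cap \NS(Y_p)$ via the fact that $g$ matches transcendental lattices. Your extra care in working with rational spans and then descending to the integral lattice is a minor refinement of a step the paper treats more briskly, not a different argument.
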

\begin{proof}
By construction, we have that $\NS(X_p)^\Sigma$ is fixed under monodromy. Therefore, the relation in Equation \eqref{eq:quot} implies that its image in $K_0^{\perp}$ under the $\mathbb{Q}(\sqrt{|\Sigma|})$ isometry $g$ is also fixed. Since $g$ sends the transcendental lattice of $X_p$ to the transcendental lattice $Y_p$, the image of $\NS(X_p)^\Sigma$ under $g$ is $K_0^{\perp} \cap \NS(Y_p)$. Thus $K_0^{\perp} \cap \NS(Y_p)$ is fixed by monodromy of the family $\mathcal{Y}$.
\end{proof}

\subsection{Nikulin involutions in families}

We will now tie our results together. We begin with a family $\mathcal{X}$ of K3 surfaces which admits a fibrewise Nikulin involution and is lattice polarized by a lattice $N$ which is isomorphic to the generic N\'eron-Severi lattice of the fibres of $\mathcal{X}$. Our goal is to understand how lattice polarization behaves under Nikulin involutions in families. We begin with some generalities on Nikulin involutions.

A Nikulin involution fixes precisely $8$ points on $X$. The resulting quotient $X/\beta$ has $8$ ordinary double points which are then resolved by blowing up to give a new K3 surface $Y$. We can also resolve these singularities indirectly by blowing up $X$ at the $8$ fixed points of $\beta$, calling the resulting exceptional divisors $\{E_i\}_{i=1}^8$. We see that the blown up K3 surface $\tilde{X}$ also admits an involution $\tilde{\beta}$ whose fixed locus is the exceptional divisor
$$
D = \sum_{i=1}^8 E_i.
$$
Let $F_i = q_* E_i$, where $q\colon \tilde{X}\to \tilde{X}/\tilde{\beta} \cong Y$ is the quotient map. The branch divisor in $Y$ is then the sum $f_*D = \sum_{i=1}^8 F_i$. Since there is a double cover ramified over $f_*D$, there must be some divisor 
$$
B = \dfrac{1}{2} f_*D.
$$
We call the lattice generated by $B$ and $\{F_i \}_{i=1}^8$ the Nikulin lattice, which we denote $K_{\mathrm{Nik}}$. 

According to \cite[Section 6]{fagkk3s}, $K_{\mathrm{Nik}}$ is a primitive sublattice of $\NS(\tilde{X}/\tilde{\beta})$ and, in the case where $\Sigma$ is a group of order $2$, the lattice $K_0$ discussed in Section \ref{section:involutions} is equal to $K_{\mathrm{Nik}}$. The following theorem is a technical tool, useful for calculations in Section \ref{genconst}.

\begin{theorem}
\label{thm:involutions}
Let $\mathcal{X} \to U$ be an $N$-polarized family of K3 surfaces and suppose $N \cong \NS(X_p)$ for some fibre $X_p$. Suppose further that $X_p$ admits a Nikulin involution $\beta$; by Corollary \ref{cor:sympaut} this extends to an involution on $\mathcal{X}$. Let $\mathcal{Y} \rightarrow U$ be the resolved quotient family of K3 surfaces and let $N'$ be the N\'eron-Severi lattice of a generic fibre of $\mathcal{Y}$. Then there is a subgroup $G$ of $\Aut(A_{N'})$ for which $\mathcal{Y}$ is an $(N',G)$-polarized family of K3 surfaces.
\end{theorem}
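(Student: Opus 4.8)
The plan is to apply the characterization of $(N',G)$-polarizability furnished by Corollary \ref{cor:rephrase}. That corollary tells us that a family $\mathcal{Y}$ with generic N\'eron-Severi lattice $N'$ is $(N',G)$-polarized for some $G \subseteq \Aut(A_{N'})$ if and only if there is no $\gamma \in \pi_1(U,p)$ whose action $\rho_{\mathcal{NS}(\mathcal{Y})}(\gamma)$ on $\NS(Y_p)$ agrees with the restriction to $\NS(Y_p)$ of some symplectic automorphism $\sigma$ of the fibre $Y_p$. So the entire content reduces to showing that the monodromy of $\mathcal{NS}(\mathcal{Y})$ never coincides with the restriction of a fibrewise symplectic automorphism. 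Equivalently, via Theorem \ref{thm:pss}, I must show that $\alpha_{N'} \cdot \rho_{\mathcal{NS}(\mathcal{Y})}$ is injective on the image of $\rho_{\mathcal{NS}(\mathcal{Y})}$, and then simply take $G$ to be (a group containing) the image of this composition inside $\Aut(A_{N'})$.

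First I would pin down the relevant sublattice of $\NS(Y_p)$ that is fixed by monodromy. Since $\mathcal{X}$ is $N$-polarized with $N \cong \NS(X_p)$, the whole N\'eron-Severi lattice of the general fibre of $\mathcal{X}$ is monodromy-invariant, so trivially $\NS(X_p)^\beta = \NS(X_p)$. Applying Corollary \ref{cor:hodge} to the order-two group $\Sigma = \{\Id,\beta\}$, with $K_0 = K_{\mathrm{Nik}}$ as recorded after the statement, I obtain that $K_{\mathrm{Nik}}^\perp \cap \NS(Y_p)$ is fixed by the monodromy representation of $\mathcal{Y}$. The next step is to combine this with the fact that $\NS(Y_p)$ is generated, up to finite index, by $K_{\mathrm{Nik}}^\perp \cap \NS(Y_p)$ together with the Nikulin lattice $K_{\mathrm{Nik}}$ itself; the classes $B, F_1,\dots,F_8$ spanning $K_{\mathrm{Nik}}$ are exceptional/branch classes canonically attached to the resolution and are therefore preserved individually (or at worst permuted in a controlled way) by monodromy. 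This should show that $\rho_{\mathcal{NS}(\mathcal{Y})}$ fixes a finite-index sublattice of $\NS(Y_p)$, namely the one generated by $K_{\mathrm{Nik}}$ and $K_{\mathrm{Nik}}^\perp \cap \NS(Y_p)$.

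Once I know $\rho_{\mathcal{NS}(\mathcal{Y})}$ fixes a finite-index sublattice, I argue that its action is too constrained to agree with any nontrivial symplectic automorphism. The key input is the structure theory recalled in Section \ref{section:involutions}: any nontrivial symplectic automorphism $\sigma$ of $Y_p$ has an associated lattice $S_\sigma$ that is negative definite of rank at least $8$, on which $\sigma$ acts nontrivially, and which lies in $\NS(Y_p)$. If $\rho_{\mathcal{NS}(\mathcal{Y})}(\gamma)$ fixes a finite-index sublattice of $\NS(Y_p)$ pointwise, then on $A_{N'} = (N')^*/N'$ it induces the trivial automorphism, i.e.\ $\alpha_{N'}(\rho_{\mathcal{NS}(\mathcal{Y})}(\gamma)) = \Id$. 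I would then show that no restriction of a nontrivial symplectic $\sigma$ can act trivially on $\NS(Y_p)$ while also acting trivially on $A_{N'}$: by Theorem \ref{thm:pss} the symplectic automorphisms fixing $[\mathcal{L}_p]$ are exactly the kernel of $\alpha_{N'}$, but such an automorphism acts nontrivially on the rank-$\geq 8$ definite lattice $S_\sigma \subseteq \NS(Y_p)$, contradicting the assertion that monodromy fixes a finite-index sublattice. Hence $\alpha_{N'}\cdot \rho_{\mathcal{NS}(\mathcal{Y})}$ is injective and $\mathcal{Y}$ is $(N',G)$-polarized with $G = \Image(\alpha_{N'}\cdot\rho_{\mathcal{NS}(\mathcal{Y})})$.

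The step I expect to be the main obstacle is controlling the behaviour of monodromy on the Nikulin lattice $K_{\mathrm{Nik}}$ itself, rather than on its orthogonal complement. Corollary \ref{cor:hodge} hands me invariance of $K_{\mathrm{Nik}}^\perp \cap \NS(Y_p)$ for free, but to conclude that a \emph{finite-index} sublattice of all of $\NS(Y_p)$ is fixed I must rule out monodromy mixing the exceptional/branch classes $B,F_1,\dots,F_8$ with transcendental or with complementary algebraic classes, and ideally show these eight-plus-one classes are individually monodromy-invariant (they are distinguished geometrically as the resolution and half-branch classes, so any monodromy must permute them among themselves and fix their span). Handling this carefully — and checking that passing to a finite-index fixed sublattice genuinely forces triviality of the induced action on $A_{N'}$ for any monodromy element that could coincide with a symplectic automorphism — is where the real work lies; the rest is an application of the already-established dictionary between monodromy and symplectic automorphisms.
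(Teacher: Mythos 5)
There is a genuine gap, and it sits exactly where you predicted the ``real work'' would lie. Your argument hinges on showing that monodromy fixes a finite-index sublattice of $\NS(Y_p)$ \emph{pointwise} (the span of $K_{\mathrm{Nik}}$ and $K_{\mathrm{Nik}}^{\perp}\cap\NS(Y_p)$), by asserting that the classes $B,F_1,\dots,F_8$ are individually monodromy-invariant because they are geometrically distinguished. This is false in general: the set $\{F_1,\dots,F_8\}$ is preserved (since $\beta$ is global), but the individual exceptional classes can be nontrivially permuted by monodromy. Indeed, the whole of Section \ref{undokummer} is devoted to computing exactly this permutation action, and in Proposition \ref{MG} / Example \ref{ex:14thcase} the resulting group $G$ is as large as $(S_3\times S_3)\rtimes C_2$ and is attained minimally. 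Note also that your intended conclusion proves too much: an element fixing a finite-index sublattice of $N'$ pointwise is the identity on $N'\otimes\Q$, hence on $N'$, so your argument would show $G=\Id$, i.e.\ that $\calY$ is always $N'$-polarized --- contradicting those later results and making Theorem \ref{thm:dekummer} unnecessary.

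The correct route keeps your first two steps (Corollary \ref{cor:rephrase} as the target, Corollary \ref{cor:hodge} to fix $K_{\mathrm{Nik}}^{\perp}\cap\NS(Y_p)$) but then argues by contradiction rather than by pointwise invariance. Suppose a non-identity element $g$ of the monodromy image lies in $\ker\alpha_{N'}$; by Theorem \ref{thm:pss} it acts on $\NS(Y_p)$ as a nontrivial symplectic automorphism $\tau$, so the orthogonal complement of its fixed lattice is $S_\tau$, which has rank at least $8$. Since $K_{\mathrm{Nik}}^{\perp}\cap\NS(Y_p)$ is fixed, this complement lies inside $K_{\mathrm{Nik}}$, which has rank exactly $8$, forcing equality. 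The contradiction is then purely lattice-theoretic and is the ingredient missing from your proposal: $K_{\mathrm{Nik}}$ is generated by classes of square $(-2)$, whereas by Nikulin's result (\cite[Lemma 4.2]{fagkk3s}, recalled in Section \ref{sect:aut}) $S_\tau$ contains no $(-2)$-classes. Without this $(-2)$-class argument there is no way to rule out a monodromy element that permutes the $F_i$ while lying in $\ker\alpha_{N'}$, which is the only case actually at issue.
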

\begin{proof} To see that the resulting family $\mathcal{Y}$ is $(N',G)$-polarized for some $G$, it is enough to see that monodromy of $\mathcal{Y}$ cannot act trivially on $\Aut(A_{N'})$. 

First we note that monodromy of $\mathcal{Y}$ must fix $K_{\mathrm{Nik}}^{\perp} \cap \NS(Y_p)$ by Corollary \ref{cor:hodge}, where $K_{\mathrm{Nik}}$ denotes the Nikulin lattice. Thus the only non-trivial action of monodromy can be upon $K_{\mathrm{Nik}}$. 

Suppose for a contradiction that the image of $\rho_{\mathcal{NS}(\mathcal{Y})}$ contains a non-identity element $g$ that lies in the kernel of $\alpha_{N'}$. Recall from Theorem \ref{thm:pss} that such a $g$ must act on $\NS(Y_p)$ in the same way as a non-trivial symplectic automorphism $\tau$. Thus the orthogonal complement of the fixed lattice $\NS(Y_p)^{g}$ must have rank at least $8$. Since $K_{\mathrm{Nik}}$ has rank 8 and $K_{\mathrm{Nik}}^{\perp} \cap \NS(Y_p)$ is fixed under monodromy, the orthogonal complement of $\NS(Y_p)^{g}$ must be contained in $K_{\mathrm{Nik}}$. For reasons of rank this containment cannot be strict, so we must have equality. However, $K_{\mathrm{Nik}}$ is generated by elements of square $(-2)$, thus, by \cite[Lemma 4.2]{fagkk3s}, it cannot be the lattice $S_{\tau}$ of any automorphism $\tau$ of $X$. This is a contradiction. \end{proof}

Note that the proof given above does not extend to quotients by arbitrary symplectic automorphisms.

As a result of this theorem, Remark \ref{rmk:alg-trans dictionary} and Equation \eqref{eq:quot} we may calculate $G$.

\begin{corollary} \label{cor:Gfind}
If $g$ is the linear transformation which relates $\T(X_p)$ to $\T(Y_p)$ for some $p \in U$ and $\Gamma_\mathcal{X}$ \textup{(}resp. $\Gamma_{\calY}$\textup{)} is the image of the monodromy group of $\mathcal{T}(\mathcal{X})$ in $\mathrm{O}(\T(X_p))$ \textup{(}resp. $\mathcal{T}(\mathcal{Y})$ in $\mathrm{O}(\T(Y_p))$\textup{)}, then $\Gamma_\mathcal{Y} = g^{-1} \Gamma_\mathcal{X} g$ and the image $\alpha_{\T(\calY)}(\Gamma_\mathcal{Y})$ is the group $G$ such that $\mathcal{Y}$ is minimally $(N',G)$-polarized.
\end{corollary}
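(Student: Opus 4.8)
The plan is to read off both assertions from three ingredients already in hand: the relative Nikulin isometry of Equation~\eqref{eq:quot}, the algebraic–transcendental dictionary of Remark~\ref{rmk:alg-trans dictionary}, and the fact (Theorem~\ref{thm:involutions}) that $\mathcal{Y}$ is genuinely $(N',G)$-polarized. I would prove the conjugacy relation $\Gamma_\mathcal{Y} = g^{-1}\Gamma_\mathcal{X} g$ first, and then use it together with the dictionary to identify the induced discriminant action with the minimal polarizing group.

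For the conjugacy relation, recall from Section~\ref{section:involutions} that the map $\theta = q^*c_*$, which over $\mathbb{Q}(\sqrt{|\Sigma|})$ relates $H^2(X_p,\mathbb{Z})^\Sigma$ and $K_0^\perp$, restricts to an isomorphism $\T(Y_p)\to\T(X_p)$ after extending scalars (here $|\Sigma|=2$). This is the map $g$ of the statement. Equation~\eqref{eq:quot} says precisely that the monodromy operator on one of these fixed-part lattices is the $g$-conjugate of the monodromy operator on the other; since monodromy preserves the transcendental sub-local-system and $g$ matches $\T(X_p)$ with $\T(Y_p)$, restricting this identity fibrewise for each $\gamma\in\pi_1(U,p)$ gives $\rho_{\mathcal{T}(\mathcal{Y})}(\gamma) = g^{-1}\rho_{\mathcal{T}(\mathcal{X})}(\gamma)\,g$, and hence $\Gamma_\mathcal{Y} = g^{-1}\Gamma_\mathcal{X} g$. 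I would note that although $g$ is only defined over $\mathbb{Q}(\sqrt{2})$, both $\Gamma_\mathcal{X}$ and $\Gamma_\mathcal{Y}$ consist of honest integral isometries of the respective transcendental lattices, so the equation is an equality of subgroups of the integral orthogonal groups.

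For the identification of $G$, I would invoke Theorem~\ref{thm:involutions} to know that $\mathcal{Y}$ is $(N',G)$-polarized, so that $\alpha_{N'}$ is injective on the image of $\rho_{\mathcal{NS}(\mathcal{Y})}$ and the minimal polarizing group is by definition $G = \alpha_{N'}\bigl(\rho_{\mathcal{NS}(\mathcal{Y})}(\pi_1(U,p))\bigr)$. Because $N'$ and $\T(Y_p)=N'^\perp$ are orthogonal complements inside the even unimodular lattice $\Lambda_{\mathrm{K3}}=H^2(Y_p,\mathbb{Z})$, Proposition~\ref{prop:Nikulin-Lattices} furnishes the anti-isometry $\phi^{N'}\colon A_{N'}\to A_{N'^\perp}$, and the intertwining identity of Remark~\ref{rmk:alg-trans dictionary}, applied to the $(N',G)$-polarized family $\mathcal{Y}$, gives for every loop $\gamma$
\[
\alpha_{\T(\mathcal{Y})}\bigl(\rho_{\mathcal{T}(\mathcal{Y})}(\gamma)\bigr) = \phi^{N'}\circ \alpha_{N'}\bigl(\rho_{\mathcal{NS}(\mathcal{Y})}(\gamma)\bigr)\circ (\phi^{N'})^{-1}.
\]
Passing to images over all $\gamma$, the group $\alpha_{\T(\mathcal{Y})}(\Gamma_\mathcal{Y})$ is carried onto $G$ by conjugation by $\phi^{N'}$; under the canonical identification $A_{N'}\cong A_{\T(Y_p)}$ the two groups coincide, which is the claim.

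The step requiring the most care is the second one, and specifically the input of injectivity from Theorem~\ref{thm:involutions}. Without knowing a priori that $\mathcal{Y}$ really is $(N',G)$-polarized — and not merely that it carries a period map as in Theorem~\ref{thm:periods} — the dictionary would only yield that $\alpha_{\T(\mathcal{Y})}(\Gamma_\mathcal{Y})$ surjects onto $G$, rather than being isomorphic to it. The injectivity of $\alpha_{N'}$ on the monodromy image is exactly what upgrades the intertwining to the claimed equality and makes the phrase ``minimally $(N',G)$-polarized'' meaningful. I would therefore foreground this injectivity, which rests on the rank-$8$ Nikulin-lattice obstruction established in the proof of Theorem~\ref{thm:involutions}, before drawing the final conclusion.
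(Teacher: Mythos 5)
Your proposal is correct and follows exactly the route the paper intends: the paper gives no separate proof of Corollary \ref{cor:Gfind}, stating only that it follows from Theorem \ref{thm:involutions}, Remark \ref{rmk:alg-trans dictionary} and Equation \eqref{eq:quot}, and your argument is precisely a careful expansion of those three ingredients (the fibrewise conjugation identity restricted to the transcendental sublattices, plus the discriminant-form dictionary transported by $\phi^{N'}$, with the injectivity of $\alpha_{N'}$ on the monodromy image supplied by Theorem \ref{thm:involutions} to make ``minimally $(N',G)$-polarized'' meaningful). Your emphasis on where the injectivity enters is a fair and useful clarification rather than a deviation.
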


This allows us to control the algebraic monodromy of the family $\mathcal{Y}$ of K3 surfaces. In the following section, we concern ourselves with a geometric situation where it will be important to know exactly what our algebraic monodromy looks like. 

\section{Undoing the Kummer construction.} \label{genconst}

One of the major motivations for this work is the idea of undoing the Kummer construction globally in families. As we shall see, this has applications to the study of Calabi-Yau threefolds.

\subsection{The general case.}\label{gencaseundo}

Begin by assuming that $\mathcal{X}$ is a family of K3 surfaces which admit Shioda-Inose structure. Concretely, a \emph{Shioda-Inose structure} on a K3 surface $X$ is an embedding of the lattice $E_8 \oplus E_8$ into $\NS(X)$. By \cite[Section 6]{k3slpn}, a Shioda-Inose structure defines a canonical Nikulin involution $\beta$ and the minimal resolution of the quotient $X/\beta$ is a Kummer surface. Furthermore, if $X$ has transcendental lattice $\T(X)$, then the resolved quotient $Y = \widetilde{X/ \beta}$ has transcendental lattice $\T(Y) \cong \T(X)(2)$.

Assume that $\mathcal{X}$ is a lattice polarized family of Shioda-Inose K3 surfaces. Then by Corollary \ref{cor:sympaut}, the Nikulin involution extends to the entire family of K3 surfaces to produce a resolved quotient family $\mathcal{Y}$ of Kummer surfaces. 

We would like to find conditions under which one may undo the Kummer construction \emph{in families} starting from the polarized family $\mathcal{X}$ of K3 surfaces with Shioda-Inose structure. In other words, we would like to find conditions under which a family of abelian surfaces $\calA$ exists, such that application of the Kummer construction fibrewise to $\calA$ yields the family $\mathcal{Y}$ of Kummer surfaces associated to $\mathcal{X}$.

The following proposition provides an easy sufficient condition for undoing the Kummer construction on a family of Kummer surfaces.

\begin{proposition} \label{prop:untwistingkummer}
Beginning with a family of lattice polarized Shioda-Inose K3 surfaces $\mathcal{X}$ over $U$, the Kummer construction can be undone on the family of resolved quotient K3 surfaces $\mathcal{Y}$, if $\mathcal{Y}$ itself is lattice polarized.
\end{proposition}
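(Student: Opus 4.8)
The plan is to carry out the classical recovery of an abelian surface from its Kummer surface, but relatively over the base $U$, using the lattice polarization of $\calY$ to globalize the divisorial data that the construction requires.

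First I would fix the fibrewise picture. Since $\calX$ is a lattice polarized family of Shioda--Inose K3 surfaces, Corollary \ref{cor:sympaut} extends the canonical Nikulin involution over all of $\calX$, and the resolved quotient $\calY \to U$ is a family of Kummer surfaces, with general fibre $Y_p = \Kum(A_p)$. On each such $Y_p$ the $16$ exceptional $(-2)$-curves $E_1, \dots, E_{16}$ are pairwise disjoint, their sum is $2$-divisible in $\NS(Y_p)$, and $A_p$ is recovered by choosing a line bundle $\calL_p$ with $\calL_p^{\otimes 2} \cong \calO\bigl(\sum_i E_i\bigr)$, forming the double cover $\widetilde{A_p} \to Y_p$ branched along $\sum_i E_i$, and contracting the $16$ resulting $(-1)$-curves on $\widetilde{A_p}$ to obtain $A_p$ with $\Kum(A_p) = Y_p$.

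The heart of the argument is to perform the three operations---choice of square root, double cover, and contraction---simultaneously over $U$, and this is exactly where the hypothesis on $\calY$ is used. Because $\calY$ is lattice polarized, the monodromy representation $\rho_{\mathcal{NS}(\calY)}$ acts trivially; in particular it fixes the Kummer lattice of the general fibre, so each class $[E_i]$ and the half-sum $\tfrac12\sum_i [E_i]$ is monodromy-invariant. I would use this to produce relative divisors $\calE_1, \dots, \calE_{16}$ on the total space $\calY$ together with a line bundle $\calL$ on $\calY$ satisfying $\calL^{\otimes 2} \cong \calO\bigl(\sum_i \calE_i\bigr)$: monodromy invariance means the associated sections of the relative Picard scheme $\Pic_{\calY/U}$ are defined over all of $U$, and since the fibres are K3 surfaces (so that $\Pic = \NS$ has no continuous part) these sections are realized by genuine line bundles on $\calY$. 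With $\calL$ in hand one forms the relative double cover $\widetilde{\calA} = \mathrm{Spec}_{\calY}\bigl(\calO_{\calY} \oplus \calL^{-1}\bigr) \to \calY$ branched over $\sum_i \calE_i$, then contracts the family of exceptional curves fibrewise to obtain $\calA \to U$. Fibre by fibre this reproduces the recovery described above, so applying the Kummer construction fibrewise to $\calA$ returns $\calY$, as required.

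The main obstacle is the globalization step, namely upgrading the monodromy-invariant half-sum \emph{cohomology} class to an honest square-root \emph{line bundle} on the total space $\calY$. The potential obstruction is a Brauer-type class measuring the failure of a section of $\Pic_{\calY/U}$ to come from an actual line bundle; I expect it to vanish here because the family already carries a relatively ample bundle and because the whole construction may be carried out on the simply connected universal cover $\widetilde{U}$, where the polarization trivializes, and then descended to $U$ precisely because the relevant classes are monodromy-invariant. The remaining points---disjointness and $2$-divisibility of the $16$ curves, and that the fibrewise contraction yields smooth abelian surfaces rather than some degeneration---hold on each fibre by the classical theory of Kummer surfaces and propagate across the family because all the lattice-theoretic data is locally constant under the (trivial) monodromy.
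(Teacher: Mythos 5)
The paper states this proposition without proof (it is offered as an ``easy sufficient condition''), so there is no authorial argument to compare yours against; what you have written is the natural way to fill that gap, and it is essentially correct. You read ``lattice polarized'' correctly as polarization by the full generic N\'eron--Severi lattice $N'$ of $\calY$ (this is how the proposition is invoked in Theorem \ref{thm:dekummer}), so monodromy fixes each of the sixteen Kummer exceptional classes and their half-sum; since each $E_i$ is the unique effective representative of its class on a K3 surface, the flat classes integrate to relative divisors $\calE_i$ on the total space, and the remainder is the classical inverse Kummer construction performed relatively over $U$.

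The one step that genuinely needs care is exactly the one you flag: promoting the monodromy-invariant half-sum from a flat section of the relative Picard sheaf to an honest line bundle $\calL$ on $\calY$ with $\calL^{\otimes 2} \cong \calO_{\calY}\bigl(\sum_i \calE_i\bigr)$. Your proposed fix --- carry out the construction on $\widetilde{U}$ and descend because the class is monodromy-invariant --- is not complete as stated: descending a line bundle along $\widetilde{U} \to U$ requires a $\pi_1(U)$-equivariant structure, not merely invariance of the isomorphism class, and choosing the equivariant structure compatibly is precisely where the obstruction lives. The cleaner formulation is via the Leray spectral sequence for $\calO^*_{\calY}$: the cokernel of $\Pic(\calY) \to H^0(U, R^1\pi_*\calO^*_{\calY})$ injects into $H^2(U,\calO_U^*)$; equivalently, since $\Pic$ of a K3 surface is torsion-free, the fibrewise square root and hence the fibrewise double cover are unique, and the only ambiguity in gluing them is the deck involution, giving an obstruction class in $H^2(U,\Z/2)$. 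Either obstruction group vanishes when $U$ is a curve, which covers every application of the proposition in the paper; for higher-dimensional $U$ one would need to say something additional. Once $\calL$ exists, the relative double cover and the relative contraction of the sixteen families of $(-1)$-curves go through as you describe, so your argument is a sound proof of the statement in the generality in which it is actually used.
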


In general, however, the family $\mathcal{Y}$ will not be lattice polarized; instead, by Theorem \ref{thm:involutions}, it will be $(N',G)$-polarized, for some lattice $N'$ and subgroup $G$ of $\Aut(A_{N'})$. To rectify this, we will have to proceed to a cover $f\colon U' \to U$ to remove the action of the group $G$, so that the Kummer construction can be undone on the pulled-back family $f^*\mathcal{Y}$. 

We begin by finding such a group $G$. We note, however, that in general $\calY$ will not be \emph{minimally} $(N',G)$-polarized for this choice of $G$.

\begin{proposition} \label{prop:generalcaseG}
Let $\mathcal{X} \to U$ be a family of $N$-polarized K3 surfaces with Shioda-Inose structure, where $N$ is isometric to the N\'eron-Severi lattice of a generic K3 fibre $X_p$. Then the associated family of Kummer surfaces $\mathcal{Y}$ is an $(N',G)$-polarized family of K3 surfaces, where $N'$ is the generic N\'eron-Severi lattice of fibres of $\mathcal{Y}$ and $G$ is the group
$$
\mathrm{O}(N^{\perp})^*/ \mathrm{O}(N^{\perp}(2))^*.
$$
Furthermore, if $\mathcal{X}$ has transcendental monodromy group $\Gamma_\mathcal{X} = \mathrm{O}(N^\perp)^*$, then $\calY$ is minimally $(N',G)$-polarized.
\end{proposition}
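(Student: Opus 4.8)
The plan is to combine Theorem \ref{thm:involutions} and Corollary \ref{cor:Gfind} with an explicit computation of discriminant forms comparing $N^\perp$ and $N^\perp(2)$. First I would fix the setup. Since $\calX$ is $N$-polarized with $N \cong \NS(X_p)$, the local system $\calN$ coincides with $\mathcal{NS}(\calX)$ and carries trivial monodromy, so all monodromy is transcendental; moreover, because every monodromy transformation acts as $\Id$ on $N$ and extends to an isometry of $H^2(X_p,\Z) \cong \Lambda_{\mathrm{K3}}$, Proposition \ref{prop:Nikulin-Lattices}(2) (applied with the $N$-component equal to $\Id_N$) forces the transcendental monodromy group to lie in $\mathrm{O}(N^\perp)^*$. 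Thus $\Gamma_\calX \subseteq \mathrm{O}(N^\perp)^*$, where $\T(X_p) \cong N^\perp$. The Shioda--Inose structure supplies the canonical Nikulin involution, which extends over $\calX$ by Corollary \ref{cor:sympaut}; the resolved quotient family $\calY$ then has generic transcendental lattice $\T(Y_p) \cong \T(X_p)(2) \cong N^\perp(2)$, and with $N' = \NS(Y_p)$ we have $(N')^\perp \cong N^\perp(2)$.

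Next I would invoke the machinery already built. By Theorem \ref{thm:involutions}, $\calY$ is $(N',G_0)$-polarized for some $G_0 \subseteq \Aut(A_{N'})$; in particular the injectivity hypothesis of Definition \ref{def:polarizable} holds automatically. By Corollary \ref{cor:Gfind} the minimal such $G_0$ equals $\alpha_{\T(\calY)}(\Gamma_\calY)$, where $\Gamma_\calY = g^{-1}\Gamma_\calX g$ and $g\colon \T(X_p) \to \T(Y_p)$ is the $\Q(\sqrt 2)$-isometry relating the two transcendental lattices. The key observation is that, because $\T(Y_p) = \T(X_p)(2)$, the map $g$ is $\tfrac{1}{\sqrt 2}$ times the integral identification $\iota$ underlying this isometry of the common $\Z$-module; the scalar cancels under conjugation, so under the identification $\mathrm{O}(N^\perp) = \mathrm{O}(N^\perp(2))$ of automorphism groups of the shared underlying module, the subgroup $\Gamma_\calY$ corresponds exactly to $\Gamma_\calX$.

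The heart of the argument is then a comparison of the quotient maps $\alpha_{N^\perp}$ and $\alpha_{N^\perp(2)}$ on this common group. Since $N^\perp \subseteq (N^\perp)^* \subseteq \tfrac12 (N^\perp)^* = (N^\perp(2))^*$, the discriminant group $A_{N^\perp}$ sits inside $A_{N^\perp(2)}$ as a characteristic subgroup, so any isometry acting trivially on $A_{N^\perp(2)}$ also acts trivially on $A_{N^\perp}$; hence $\mathrm{O}(N^\perp(2))^* \subseteq \mathrm{O}(N^\perp)^*$. In particular the quotient $G := \mathrm{O}(N^\perp)^*/\mathrm{O}(N^\perp(2))^*$ is defined, and it is precisely the image $\alpha_{N^\perp(2)}(\mathrm{O}(N^\perp)^*)$, which I transport into $\Aut(A_{N'})$ via the canonical identification $\phi^{N'}$ of Proposition \ref{prop:Nikulin-Lattices}(1) (using $\T(Y_p) = (N')^\perp$). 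Combining with $\Gamma_\calX \subseteq \mathrm{O}(N^\perp)^*$ yields $G_0 = \alpha_{N^\perp(2)}(\Gamma_\calX) \subseteq \alpha_{N^\perp(2)}(\mathrm{O}(N^\perp)^*) = G$; since enlarging the group preserves the polarization (the remark following Definition \ref{def:polarizable}), $\calY$ is $(N',G)$-polarized. For the final clause, if $\Gamma_\calX = \mathrm{O}(N^\perp)^*$ then $G_0 = \alpha_{N^\perp(2)}(\mathrm{O}(N^\perp)^*) = G$, so by Corollary \ref{cor:Gfind} the family is minimally $(N',G)$-polarized.

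The main obstacle I expect is the discriminant-form bookkeeping of the third step: one must show cleanly that $\ker \alpha_{N^\perp(2)} \subseteq \ker \alpha_{N^\perp}$ and that $\alpha_{N^\perp(2)}(\mathrm{O}(N^\perp)^*)$ is genuinely the stated quotient, all while tracking the anti-isometry $\phi^{N'}$ so that $G$ is viewed inside $\Aut(A_{N'})$ rather than $\Aut(A_{N^\perp(2)})$. A secondary point requiring care is the justification that $g$ respects the integral structure, so that conjugation by $g$ leaves the monodromy subgroup unchanged inside the integral orthogonal group; this rests on the precise Shioda--Inose identification $\T(Y_p) \cong \T(X_p)(2)$ rather than on a mere $\Q(\sqrt 2)$-isometry.
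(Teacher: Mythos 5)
Your proposal is correct and follows essentially the same route as the paper's proof: both rest on Corollary \ref{cor:Gfind}, the identification $\mathrm{O}(\T(X_p)) \cong \mathrm{O}(\T(Y_p))$ coming from $\T(Y_p) \cong \T(X_p)(2)$, the containment $\Gamma_\calX \subseteq \mathrm{O}(N^\perp)^*$ for an $N$-polarized family, and the identification of $\ker \alpha_{\T(\calY)}$ with $\mathrm{O}(N^\perp(2))^*$. You merely make explicit some bookkeeping the paper leaves implicit (the inclusion $\mathrm{O}(N^\perp(2))^* \subseteq \mathrm{O}(N^\perp)^*$ via $A_{N^\perp} \subseteq A_{N^\perp(2)}$, and the transport through $\phi^{N'}$), which is fine.
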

\begin{proof} By the results of Section \ref{sect:sympquot} there is a map
$$
g\colon \rho_{\mathcal{T}(\mathcal{X})} \rightarrow \rho_{\mathcal{T}(\mathcal{Y})}.
$$
 Let $X_p$ be a general fibre of $\calX$ and let $Y_p$ be the associated fibre of $\calY$. As $X_p$ has Shioda-Inose structure and $Y_p$ is the associated Kummer surface, the transformation $g$ induces the identity map on the level of orthogonal groups,
$$
\Id\colon \mathrm{O}(\T(X_p)) \rightarrow \mathrm{O}(\T(Y_p))
$$
since the lattice $\T(Y_p)$ is just $\T(X_p)$ scaled by $2$. 

Let $\Gamma_{\calX}$ (resp. $\Gamma_{\calY}$) denote the transcendental monodromy group of $\calX$ (resp. $\calY$). Then, by Corollary \ref{cor:Gfind}, $\Gamma_{\calY} = g^{-1}\Gamma_{\calX}g \cong \Gamma_{\calX}$ and $\calY$ is minimally $(N',\alpha_{\T(\calY)}(\Gamma_{\calY}))$-polarized. But $\Gamma_\mathcal{X} \subset \mathrm{O}(\T(X_p))^* \cong \mathrm{O}(N^{\perp})^*$ (by \cite[Proposition 3.3]{mslpk3s}) and $\alpha_{\T(\calY)}$ has kernel $\mathrm{O}(\T(X_p)(2))^* \cong \mathrm{O}(N^{\perp}(2))^*$, so $\alpha_{\T(\calY)}(\Gamma_{\calY}) \subset G$, where $G$ is as in the statement of the proposition, with equality if $\Gamma_\mathcal{X} = \mathrm{O}(N^\perp)^*$.\end{proof}

The group $G$ from this proposition will prove to be very useful in later sections.

\subsection{\texorpdfstring{$M$}{M}-polarized K3 surfaces.} \label{M-polarized}

We will be particularly interested in the case in which our family $\mathcal{X}$ is $M$-polarized, where $M$ denotes the lattice
\[ M := H \oplus E_8 \oplus E_8.\]
Such families admit canonically defined Shioda-Inose structures, so the discussion from Section \ref{gencaseundo} holds.

Our interest in such familes stems from the paper \cite{doranmorgan}, in which Doran and Morgan explicitly classify the possible integral variations of Hodge structure that can underlie a family of Calabi-Yau threefolds over $\Proj^1-\{0,1,\infty\}$ with $h^{2,1} = 1$. Their classification is given in \cite[Table 1]{doranmorgan}, which divides the possibilities into fourteen cases. Explicit examples, arising from toric geometry, of families of Calabi-Yau threefolds realising thirteen of these cases were known at the time of publication of \cite{doranmorgan} and are given in the rightmost column of \cite[Table 1]{doranmorgan}. A family of Calabi-Yau threefolds that realised the missing case (hereafter known as the \emph{14th case}) was constructed in \cite{14thcase}.

It turns out that many of these threefolds admit fibrations by $M$-polarized K3 surfaces. The ability to undo the Kummer construction globally on such threefolds therefore provides a new perspective on the geometry of the families in  \cite[Table 1]{doranmorgan}, which will be explored further in the remainder of this paper.

We begin this discussion with a brief digression into the geometry of $M$-polarized K3 surfaces, that we will need in the subsequent sections. In this section we will denote an $M$-polarized K3 surface by $(X,i)$, where $X$ is a K3 surface and $i$ is an embedding $i \colon M \hookrightarrow \mathrm{NS}(X)$.

Clingher, Doran, Lewis and Whitcher \cite{nfk3smmp} have shown that $M$-polarized K3 surfaces have a coarse moduli space given by the locus $d \neq 0$ in the weighted projective space $\WP(2,3,6)$ with weighted coordinates $(a,b,d)$. Thus, by normalizing $d = 1$, we may associate a pair of complex numbers $(a,b)$ to an $M$-polarized K3 surface $(X,i)$.

Let $\beta$ denote the Nikulin involution defined by the canonical Shioda-Inose structure on $(X,i)$. Then Clingher and Doran \cite[Theorem 3.13]{milpk3s} have shown that the resolved quotient $Y = \widetilde{X/\beta}$ is isomorphic to the Kummer surface $\mathrm{Kum}(A)$, where $A \cong E_1 \times E_2$ is an Abelian surface that splits as a product of elliptic curves. By \cite[Corollary 4.2]{milpk3s} the $j$-invariants of these elliptic curves are given by the roots of the equation
\[ j^2 - \sigma j + \pi = 0,\]
where $\sigma$ and $\pi$ are given in terms of the $(a,b)$ values associated to $(X,i)$ by $\sigma = a^3 - b^2+1$ and $\pi = a^3$. 

There is one final piece of structure on $(X,i)$ that we will need in our discussion. By \cite[Proposition 3.10]{milpk3s}, the K3 surface $X$ admits two uniquely defined elliptic fibrations $\Theta_{1,2}\colon X \to \Proj^1$, the \emph{standard} and \emph{alternate fibrations}. We will be mainly concerned with the alternate fibration $\Theta_2$. This fibration has two sections, one singular fibre of type $I_{12}^*$ and, if $a^3 \neq (b \pm 1)^2$, six singular fibres of type $I_1$ \cite[Proposition 4.6]{milpk3s}. Moreover, $\Theta_2$ is preserved by the Nikulin involution $\beta$, so induces a fibration $\Psi\colon Y \to \Proj^1$ on $X$. The two sections of $\Theta_2$ are identified to give a section of $\Psi$, and $\Psi$ has one singular fibre of type $I_{6}^*$ and, if $a^3 \neq (b \pm 1)^2$, six $I_2$'s \cite[Proposition 4.7]{milpk3s}.

\subsection{Undoing the Kummer construction for \texorpdfstring{$M$}{M}-polarized families}\label{undokummer}

We will use this background to outline a method by which we can undo the Kummer construction for a family obtained as a resolved quotient of an $M$-polarized family of K3 surfaces. An illustration of the use of this method to undo the Kummer construction in an explicit example may be found in \cite[Section 7.1]{14thcase}.

Let $N$ be a lattice that contains a sublattice isomorpic to $M$. Assume that $\calX$ is an $N$-polarized family of K3 surfaces over $U$ with generic N\'{e}ron-Severi lattice $N \cong NS(X_p)$, where $X_p$ is the fibre over a general point $p \in U$. Choose an embedding $M \hookrightarrow \mathrm{NS}(X_p)$; this extends uniquely to all other fibres of $\calX$ by parallel transport and thus exhibits $\calX$ as an $M$-polarized family of K3's.

This $M$-polarization induces a Shioda-Inose structure on the fibres of $\calX$, which defines a canonical Nikulin involution on these fibres that extends globally by Corollary \ref{cor:sympaut}. Define $\calY$ to be the variety obtained from $\calX$ by quotienting by this fibrewise Nikulin involution and resolving the resulting singularities. Then $\calY$ is fibred over $U$ by Kummer surfaces associated to products of elliptic curves. Let $Y_p \cong \mathrm{Kum}(E_1 \times E_2)$ denote the fibre of $\calY$ over the point $p \in U$, where $E_1$ and $E_2$ are elliptic curves.

The aim of this section is to find a cover $\calY'$ of $\calY$ upon which we can undo the Kummer construction. The results of Section \ref{gencaseundo} give a way to do this. Let $N' \cong \mathrm{NS}(Y_p)$ denote the generic N\'{e}ron-Severi lattice of $\calY$. Then Theorem \ref{thm:involutions} shows that there is a subgroup $G$ of $\Aut(A_{N'})$ for which $\calY$ is an $(N',G)$-polarized family of K3 surfaces. We will find a way to compute the action of monodromy around loops in $U$ on $N'$, which will allow us to find the group $G$ such that $\calY$ is a minimally $(N',G)$-polarized family, along with a cover $\calY'$ of $\calY$ that is an $N'$-polarized family of K3 surfaces. Then Proposition \ref{prop:untwistingkummer} shows that we can undo the Kummer construction on $\calY'$.

To simplify this problem we note that, by Corollary \ref{cor:hodge}, the only non-trivial action of monodromy on $N'$ can be on the Nikulin lattice $K_{\mathrm{Nik}}$ contained within it. This lattice is generated by the eight exceptional curves $F_i$ obtained by blowing up the fixed points of the Nikulin involution. Moreover, as $\beta$ extends to a global involution on $\calX$, the set $\{F_1,\ldots,F_8\}$ is preserved under monodromy (although the curves themselves may be permuted). Thus, we can compute the action of monodromy on $N'$ by studying its action on the curves $F_i$.

To find these curves, we begin by studying the configuration of divisors on a general fibre $Y_p$. Recall that $Y_p$ is isomorphic to $\mathrm{Kum}(E_1 \times E_2)$, where $E_1$ and $E_2$ are elliptic curves. There is a special configuration of twenty-four $(-2)$-curves on $\mathrm{Kum}(E_1 \times E_2)$ arising from the Kummer construction, that we shall now describe (here we note that we use the same notation as \cite[Definition 3.18]{milpk3s}, but with the roles of $G_i$ and $H_j$ reversed).

Let $\{x_0,x_1,x_2,x_3\}$ and $\{y_0,y_1,y_2,y_3\}$ denote the two sets of points of order two on $E_1$ and $E_2$ respectively. Denote by $G_i$ and $H_j$ ($0 \leq i,j \leq 3$) the $(-2)$-curves on $\mathrm{Kum}(E_1 \times E_2)$ obtained as the proper transforms of $E_1 \times \{y_i\}$ and $\{x_j\} \times E_2$ respectively. Let $E_{ij}$ be the exceptional $(-2)$-curve on $\mathrm{Kum}(E_1 \times E_2)$ associated to the point $(x_j,y_i)$ of $E_1 \times E_2$. This gives $24$ curves, which have the following intersection numbers:

\begin{align*}
G_i.H_j &= 0, \\
G_k.E_{ij} &= \delta_{ik}, \\
H_k.E_{ij} &= \delta_{jk}.
\end{align*}

\begin{defn} The configuration of twenty-four $(-2)$-curves 
\[ \{G_i,H_j,E_{ij} \mid 0 \leq i,j \leq 3 \}\]
is called a \emph{double Kummer pencil} on $\mathrm{Kum}(E_1 \times E_2)$.
\end{defn}

\begin{remark} Note that there may be many distinct double Kummer pencils on $\mathrm{Kum}(E_1 \times E_2)$. However, if $E_1$ and $E_2$ are non-isogenous, Oguiso \cite[Lemma 1]{ojfkspniec} shows that any two double Kummer pencils are related by a symplectic automorphism on $\mathrm{Kum}(E_1 \times E_2)$.
\end{remark}

Clingher and Doran \cite[Section 3.4]{milpk3s} identify such a pencil on the resolved quotient of an $M$-polarised K3 surface. We will study this pencil on a fibre of $\calY$ and, by studying the action of monodromy on it, derive the action of monodromy on the curves $F_i$. 

By the discussion in Section \ref{M-polarized}, the $M$-polarization structure on $X_p$ defines an elliptic fibration $\Theta_2$ on it, which is compatible with the Nikulin involution. Furthermore, as $\calX$ is an $M$-polarized family, this elliptic fibration extends to all fibres of $\calX$ and is compatible with the fibrewise Nikulin involution. Therefore $\Theta_2$ induces an elliptic fibration $\Psi$ on $Y_p$ which extends uniquely to all fibres of $\calY$, so $\Psi$ must be preserved under the action of monodromy around loops in $U$.

Using the same notation as in \cite[Diagram (26)]{milpk3s}, we may label some of the $(-2)$-curves in the fibration $\Psi$ as follows: 
\begin{equation*}
\def\objectstyle{\scriptstyle}
\def\labelstyle{\scriptstyle}
\xymatrix @-0.6pc  
{
\stackrel{R_1}{\bullet} \ar @{-} [r] & \stackrel{R_2}{\bullet} \ar @{-} [dr] & & & &  & &  & & \stackrel{F_{1}}{\bullet} \ar @{-} [dl] 
 \\
& & \stackrel{R_3}{\bullet} \ar @{-} [r] \ar @{-} [dl] &
\stackrel{R_5}{\bullet} \ar @{-} [r] &
\stackrel{R_6}{\bullet} \ar @{-} [r] &
\stackrel{R_7}{\bullet} \ar @{-} [r] &
\stackrel{R_8}{\bullet} \ar @{-} [r] &
\stackrel{R_9}{\bullet} \ar @{-} [r] &
\stackrel{\tilde{S}_1}{\bullet}  \ar @{-} [dr]   \\
&  \stackrel{R_{4}}{\bullet} & & & & & & & & \stackrel{F_{2}}{\bullet} 
}
\end{equation*}
Here $R_1$ is the section of $\Psi$ given uniquely as the image of the two sections of $\Theta_2$ and the remaining curves form the $I_6^*$ fibre. Note that the $R_i$ and $\tilde{S}_1$ are uniquely determined by the structure of $\Psi$, so must be invariant under the action of monodromy around loops in $U$. By the discussion in \cite[Section 3.5]{milpk3s} the curves $F_1$ and $F_2$ are two of the eight exceptional curves that we seek, but are determined only up to permutation.

By the discussion in \cite[Section 4.6]{milpk3s}, we may identify these curves with $(-2)$-curves in a double Kummer pencil as follows: $R_1 = G_2$, $R_2 = E_{20}$, $R_3 = H_0$, $R_4 = E_{30}$, $R_5 = E_{10}$, $R_6 = G_1$, $R_7 = E_{11}$, $R_8 = H_1$, $R_9 = E_{01}$, $\tilde{S}_1 = G_0$, $F_1 = E_{02}$ and $F_2 = E_{03}$. This gives:

\begin{lemma} \label{10fixed} In the double Kummer pencil on $Y_p$ defined above, the action of monodromy around loops in $U$ must fix the $10$ curves $G_0$, $G_1$, $G_2$, $H_0$, $H_1$, $E_{01}$, $E_{10}$, $E_{11}$, $E_{20}$, $E_{30}$. \end{lemma}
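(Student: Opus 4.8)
The plan is to deduce the lemma from the fact, already established in the text, that the induced elliptic fibration $\Psi \colon Y_p \to \Proj^1$ is preserved under the action of monodromy around loops in $U$, combined with the explicit identification of the curves $R_i$, $\tilde{S}_1$ with members of the double Kummer pencil. The heart of the matter is to show that each of the ten curves $R_1, \ldots, R_9, \tilde{S}_1$ is \emph{individually} fixed by monodromy, rather than merely being permuted among themselves; the lemma then follows immediately upon substituting the identifications $R_1 = G_2$, $R_2 = E_{20}$, and so on.

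First I would record that, since $\Psi$ is monodromy-invariant and its single reducible fibre of Kodaira type $I_6^*$ is distinguished from the six $I_2$ fibres, this $I_6^*$ fibre is preserved as a whole, so monodromy acts on its set of components through an automorphism of the associated dual graph, which is the affine Dynkin diagram $\tilde{D}_{10}$, preserving all intersection numbers. I would likewise note that the section $R_1$ is preserved: it is canonically the image of the two sections of the alternate fibration $\Theta_2$ under the Nikulin involution, and hence is respected by any monodromy transformation, which acts compatibly with the entire Shioda--Inose and Kummer construction.

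The central step is a rigidity argument on $\tilde{D}_{10}$. The section $R_1$ meets the $I_6^*$ fibre in exactly one multiplicity-one component, namely $R_2$, so $R_2$ is fixed. Its unique neighbour inside the fibre is the trivalent fork vertex $R_3$, which is therefore also fixed; the remaining leaf $R_4$ at this fork is then the unique fibre component adjacent to $R_3$ other than $R_2$ and $R_5$, and so is fixed as well. Walking along the central chain $R_3 - R_5 - R_6 - R_7 - R_8 - R_9 - \tilde{S}_1$, each vertex is the unique next vertex, so $R_5, \ldots, R_9, \tilde{S}_1$ are fixed in turn. This pins down all of $R_1, \ldots, R_9, \tilde{S}_1$; the only freedom remaining in a graph automorphism of $\tilde{D}_{10}$ that fixes everything up to $\tilde{S}_1$ is the possible interchange of the two leaves $F_1, F_2$ at the far fork, which is precisely why $F_1$ and $F_2$ are determined only up to permutation.

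I expect the rigidity argument on $\tilde{D}_{10}$ to be the only genuine obstacle, and even it is mild: the single point requiring care is the identification of $R_2$ as the precise component met by the section, so that the fixed starting point of the walk is located correctly; once this is in place the linear structure of the central chain makes the propagation of fixedness automatic. Finally I would translate through the dictionary $R_1 = G_2$, $R_2 = E_{20}$, $R_3 = H_0$, $R_4 = E_{30}$, $R_5 = E_{10}$, $R_6 = G_1$, $R_7 = E_{11}$, $R_8 = H_1$, $R_9 = E_{01}$, $\tilde{S}_1 = G_0$ to conclude that the ten curves $G_0, G_1, G_2, H_0, H_1, E_{01}, E_{10}, E_{11}, E_{20}, E_{30}$ are fixed under monodromy, as claimed.
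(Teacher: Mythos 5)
Your proposal is correct and follows the same route as the paper: monodromy preserves the fibration $\Psi$ and hence the unique $I_6^*$ fibre and its section, the components $R_1,\ldots,R_9,\tilde{S}_1$ are uniquely determined by the structure of $\Psi$ and so individually fixed, and the dictionary with the double Kummer pencil then yields the ten fixed curves. The only difference is that the paper simply asserts that the $R_i$ and $\tilde{S}_1$ are ``uniquely determined by the structure of $\Psi$'' (citing Clingher--Doran), whereas you make this explicit via the rigidity argument on the dual graph of the $I_6^*$ fibre, which is a correct and welcome elaboration rather than a departure.
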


We can improve on this result, but in order to do so we will need to make an assumption:

\begin{assumption}\label{I2ass} The fibration $\Psi$ on $Y_p$ has six singular fibres of type $I_2$.\end{assumption}

\begin{remark}  \label{rem:assumption} Recall from the discussion in Section \ref{M-polarized} that this assumption is equivalent to the assumption that the $(a,b)$-parameters of the $M$-polarized fibre $X_p$ satisfy $a^3 \neq (b \pm 1)^2$.\end{remark}

Using this, we may now identify all eight of the curves $F_i$. From the discussion above, we already know $F_1 = E_{02}$ and $F_2 = E_{03}$. \cite[Section 3.5]{milpk3s} shows that, under Assumption \ref{I2ass}, the remaining six $F_i$ are the components of the six $I_2$ fibres in $\Psi$ that are disjoint from the section $R_1 = G_2$.

Kuwata and Shioda \cite[Section 5.2]{epdeefks} explicitly identify these six $I_2$ fibres in the double Kummer pencil on $Y_p$. We see that:
\begin{itemize}
\item the section $G_3$ of $\Psi$ is the unique section that intersects all six of $F_3,\ldots,F_8$,
\item the section $H_2$ of $\Psi$ intersects $F_1$ and precisely three of $F_3,\ldots,F_8$ (say $F_3$, $F_4$, $F_5$), and
\item the section $H_3$ of $\Psi$ intersects $F_2$ and the other three $F_3,\ldots,F_8$ (say $F_6$, $F_7$, $F_8$).
\end{itemize}

Combining this with Lemma \ref{10fixed} and the fact that the structure of $\Psi$ is preserved under monodromy, we obtain

\begin{proposition} \label{11fixed} In addition to fixing the ten curves from Lemma \ref{10fixed}, the action of monodromy around a loop in $U$ must also fix $G_3$ and either
\begin{enumerate}[\textup{(}1\textup{)}]
\item fix both $F_1 = E_{02}$ and $F_2 = E_{03}$, in which case $H_2$ and $H_3$ are also fixed and the sets $\{F_3,F_4,F_5\}$ and $\{F_6,F_7,F_8\}$ are both preserved, or
\item interchange $F_1 = E_{02}$ and $F_2 = E_{03}$, in which case $H_2$ and $H_3$ are also swapped and the sets $\{F_3,F_4,F_5\}$ and $\{F_6,F_7,F_8\}$ are interchanged.
\end{enumerate}
\end{proposition}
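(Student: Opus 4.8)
The plan is to run everything through three structural properties of the isometry $\phi$ induced on $\NS(Y_p)$ by monodromy around a fixed loop in $U$. First, $\phi$ preserves all intersection numbers. Second, $\phi$ preserves the elliptic fibration $\Psi$, so it carries sections to sections, fibres to fibres, and induces an automorphism of the dual graph of each singular fibre; in particular it fixes the unique $I_6^*$ fibre of $\Psi$. Third, because $\beta$ extends globally, $\phi$ preserves the set $\{F_1,\dots,F_8\}$, and by Lemma \ref{10fixed} it fixes the ten listed curves. The whole statement will follow by combining these with the incidences recorded above.

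First I would establish the dichotomy for $F_1$ and $F_2$. From the diagram for $\Psi$, the curves $F_1 = E_{02}$ and $F_2 = E_{03}$ are exactly the two prongs at the $\tilde{S}_1$-end of the $I_6^*$ fibre. Every other component of this fibre is individually fixed by $\phi$: the curves $R_2,\dots,R_9$ and $\tilde{S}_1 = G_0$ are determined by the structure of $\Psi$, hence invariant. Thus the graph automorphism induced by $\phi$ on the $I_6^*$ fibre fixes all components except possibly the two symmetric prongs $F_1,F_2$, which it can only fix or transpose. In either case $\{F_1,F_2\}$ is preserved, and therefore so is the complementary set $\{F_3,\dots,F_8\}$.

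Next I would identify the behaviour of $G_3$, $H_2$ and $H_3$ using the Kuwata--Shioda description recorded above, in which $G_3$ is the unique section of $\Psi$ meeting all six of $F_3,\dots,F_8$, while $H_2$ meets $F_1$ together with the three curves $\{F_3,F_4,F_5\}$ and $H_3$ meets $F_2$ together with the three curves $\{F_6,F_7,F_8\}$. Since $\phi$ permutes sections, preserves intersection numbers, and preserves $\{F_3,\dots,F_8\}$, the image $\phi(G_3)$ is again a section meeting all six, so $\phi(G_3) = G_3$. For $H_2$ and $H_3$ I would split into the two cases from the previous step. If $\phi$ fixes both $F_1$ and $F_2$, then $\phi(H_2)$ is a section meeting $F_1$ and three of the six, hence $\phi(H_2) = H_2$, and likewise $\phi(H_3) = H_3$; fixing $H_2$ forces its triple $\{F_3,F_4,F_5\}$ to be preserved, and similarly $\{F_6,F_7,F_8\}$, which is alternative (1). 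If instead $\phi$ swaps $F_1$ and $F_2$, then $\phi(H_2)$ meets $\phi(F_1) = F_2$ and three of the six, so $\phi(H_2) = H_3$ and $\phi(H_3) = H_2$; the triple met by $H_2$ is then carried onto the triple met by $H_3$, so $\{F_3,F_4,F_5\}$ and $\{F_6,F_7,F_8\}$ are interchanged, which is alternative (2).

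The main obstacle is the uniqueness used in the last paragraph: a priori $\phi$ could send $G_3$, $H_2$ or $H_3$ to a different section of $\Psi$ --- some other element of the Mordell--Weil group --- sharing the same numerical incidence against the $F_i$. This is precisely where the explicit geometry is needed: the Kuwata--Shioda identification of the six $I_2$ fibres and their incident sections inside the double Kummer pencil singles out $G_3$, $H_2$ and $H_3$ unambiguously by their intersection pattern with $\{F_1,\dots,F_8\}$. Once that identification is granted, each assertion reduces to the bookkeeping above, since $\phi$ preserves intersection numbers together with the monodromy-stable sets $\{F_1,F_2\}$ and $\{F_3,\dots,F_8\}$.
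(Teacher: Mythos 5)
Your proof is correct and follows essentially the same route as the paper, which derives Proposition \ref{11fixed} in a single sentence by combining Lemma \ref{10fixed}, the monodromy-invariance of the fibration $\Psi$ (hence of its $I_6^*$ fibre and of the set of sections), and the Kuwata--Shioda incidence data; your argument is a careful elaboration of exactly that combination. The one point you use beyond what the paper states verbatim --- that the incidence patterns against $\{F_1,\dots,F_8\}$ single out the sections $H_2$ and $H_3$ uniquely --- is correctly flagged by you and is implicit in the paper's appeal to the explicit Kuwata--Shioda identification of the six $I_2$ fibres in the double Kummer pencil.
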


Whether the action of monodromy around a given loop fixes or exchanges $F_1 = E_{02}$ and $F_2 = E_{03}$ may be calculated explicitly. Recall that the curves $\{F_3,\ldots,F_8\}$ appear as components of the $I_2$ fibres in the alternate fibration on $Y_p$.  Let $x$ be an affine parameter on the base $\Proj^1_x$ of the alternate fibration on $Y_p$, chosen so that the $I_6^*$-fibre occurs at $x = \infty$. Then the locations of the $I_2$ fibres is given explicitly by \cite[Proposition 4.7]{milpk3s}: they lie at the roots of the polynomials $(P(x) \pm 1)$, where
\begin{equation}\label{Pequation} P(x) := 4x^3 - 3ax - b, \end{equation}
for $a$ and $b$ the $(a,b)$-parameters associated to the $M$-polarized K3 surface $X_p$.

Without loss of generality, we may say that $\{F_3,F_4,F_5\}$ appear in the $I_2$ fibres occurring at roots of $(P(x)-1)$ and $\{F_6,F_7,F_8\}$ appear in the $I_2$ fibres occurring at roots of $(P(x)+1)$. We thus have:

\begin{corollary} \label{paircor} Case \textup{(}1\textup{)} \textup{(}resp. \textup{(}2\textup{)}\textup{)} of Proposition \ref{11fixed} holds for monodromy around a given loop if and only if that monodromy preserves the set of roots of $(P(x)+1)$ \textup{(}resp. switches the sets of roots of the polynomials $(P(x)+1)$ and $(P(x)-1)$\textup{)}.\end{corollary}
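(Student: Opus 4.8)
The plan is to read the corollary as a direct translation of the two alternatives in Proposition \ref{11fixed} into the language of the base $\Proj^1_x$ of the alternate fibration, using the explicit location of the $I_2$ fibres recorded in \eqref{Pequation}. Recall that the elliptic fibration $\Psi$ on $Y_p$ is canonically determined by the $M$-polarization and extends to all fibres of $\calY$; hence monodromy around a loop in $U$ preserves $\Psi$, permutes its six $I_2$ fibres, and so induces a permutation of the six-point set in $\Proj^1_x$ over which these fibres sit. By \cite[Proposition 4.7]{milpk3s} together with \eqref{Pequation}, this six-point set is precisely the set of roots of $(P(x)-1)(P(x)+1)$, partitioned into the three roots of $P(x)-1$ and the three roots of $P(x)+1$.

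First I would record the dictionary between the curves $F_3,\ldots,F_8$ and the two root sets. Under Assumption \ref{I2ass}, each of the six $I_2$ fibres contains exactly one of the curves $F_3,\ldots,F_8$, namely the component disjoint from the section $R_1 = G_2$; thus each $F_i$ is pinned down by the location in $\Proj^1_x$ of the $I_2$ fibre carrying it. With the labelling fixed immediately before the statement --- $\{F_3,F_4,F_5\}$ lying over the roots of $(P(x)-1)$ and $\{F_6,F_7,F_8\}$ over the roots of $(P(x)+1)$ --- the set $\{F_3,F_4,F_5\}$ is preserved by the monodromy permutation if and only if the set of roots of $(P(x)-1)$ is preserved, and likewise $\{F_6,F_7,F_8\}$ is preserved if and only if the set of roots of $(P(x)+1)$ is preserved.

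With this dictionary in hand the corollary is immediate from Proposition \ref{11fixed}. In Case (1) both sets $\{F_3,F_4,F_5\}$ and $\{F_6,F_7,F_8\}$ are preserved, which by the dictionary is equivalent to the monodromy preserving the set of roots of $(P(x)+1)$ (equivalently, of $(P(x)-1)$); in Case (2) the two sets are interchanged, which is equivalent to the monodromy carrying the roots of $(P(x)+1)$ to the roots of $(P(x)-1)$ and vice versa. Since Proposition \ref{11fixed} guarantees that exactly one of these two alternatives occurs, the two equivalences together yield the stated criterion.

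The point requiring the most care is the precise meaning of ``monodromy preserves the set of roots of $(P(x)\pm 1)$'' when the parameters $a$ and $b$ themselves vary over $U$. I would resolve this by emphasising that we travel around a loop \emph{based at} $p$, so we return to the fibre $Y_p$ with its fixed values of $(a,b)$; the monodromy is then a permutation of the fixed six-point configuration cut out by $(P(x)-1)(P(x)+1)$ in the canonically determined base $\Proj^1_x$, and the only question is whether this permutation respects or interchanges the partition into $(+1)$-roots and $(-1)$-roots. Once this is understood, no further computation is required: the content of the corollary is entirely bookkeeping against Proposition \ref{11fixed}.
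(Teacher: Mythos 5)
Your proposal is correct and matches the paper's intent exactly: the corollary is stated there without proof precisely because it is the immediate bookkeeping consequence of the labelling convention (that $\{F_3,F_4,F_5\}$ sit over the roots of $(P(x)-1)$ and $\{F_6,F_7,F_8\}$ over those of $(P(x)+1)$) combined with the dichotomy of Proposition \ref{11fixed}, which is precisely the dictionary you spell out. Your additional remark about returning to the basepoint $p$, so that the six-point configuration in $\Proj^1_x$ is fixed, is a sensible clarification but does not change the argument.
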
 

If case (2) of Proposition \ref{11fixed} holds for some loop in $U$, we note that the Nikulin lattice is \emph{not} fixed under monodromy around that loop. This presents an obstruction to $\calY$ admitting an $N'$-polarization. To resolve this we may pull-back $\calY$ to a double cover of $U$, after which case (1) of the lemma will hold around all loops and the curves $F_1 = E_{02}$, $F_2 = E_{03}$, $H_2$ and $H_3$ will all be fixed under monodromy.

Given this, we may safely assume that case (1) holds around all loops in $U$, so $F_1$ and $F_2$ are fixed under monodromy and the sets $\{F_3,F_4,F_5\}$ and $\{F_6,F_7,F_8\}$ are both preserved. All that remains is to find whether monodromy acts to permute $F_3,\ldots,F_8$ within these sets.

\begin{proposition} \label{Fmonodromy} Assume that the action of monodromy around all loops in $U$ fixes both $F_1$ and $F_2$ \textup{(}i.e. case \textup{(}1\textup{)} of Proposition \ref{11fixed} holds around all loops in $U$\textup{)}. Then the action of monodromy around a loop in $U$ permutes $\{F_3,F_4,F_5\}$ \textup{(}resp. $\{F_6,F_7,F_8\}$\textup{)} if and only if it permutes the roots of $(P(x)-1)$ \textup{(}resp. $(P(x)+1)$\textup{)}.
\end{proposition}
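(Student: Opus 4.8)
The plan is to construct a monodromy-equivariant bijection between the set of curves $\{F_3,F_4,F_5\}$ and the set of roots of $(P(x)-1)$ (and, identically, between $\{F_6,F_7,F_8\}$ and the roots of $(P(x)+1)$), after which the claimed equivalence is immediate from equivariance. The geometric input is that each $F_i$ sits canonically over a single root of $(P(x)-1)$, and this correspondence is forced to commute with monodromy because the data that pin it down --- the fibration $\Psi$ together with the section $R_1 = G_2$ --- are themselves invariant under monodromy around loops in $U$.

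First I would recall the relative picture. The alternate fibration $\Psi$ extends to all fibres of $\calY$ and is preserved under monodromy, which exhibits it as a relative elliptic fibration over $U$ whose base is a $\Proj^1$-bundle carrying the $I_6^*$ fibre identically at $x = \infty$. By \cite[Proposition 4.7]{milpk3s} the six $I_2$ fibres occur precisely over the roots of $(P(x)-1)$ and $(P(x)+1)$, where $P(x) = 4x^3 - 3ax - b$ as in Equation \eqref{Pequation}, and $(a,b)$ are the parameters of the $M$-polarized fibre $X_p$ varying over $U$. Thus the locations of the $I_2$ fibres form a six-valued multisection of this $\Proj^1$-bundle, and analytic continuation around a loop permutes these six points exactly as it permutes the roots of $(P(x)-1)$ and $(P(x)+1)$. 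By the standing Case (1) assumption together with Corollary \ref{paircor}, monodromy preserves the partition of these six points into roots of $(P(x)-1)$ and roots of $(P(x)+1)$.

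Next I would set up the bijection. Under Assumption \ref{I2ass}, each curve $F_i$ with $i \in \{3,4,5\}$ is, by the discussion preceding the proposition, the unique component of the $I_2$ fibre over a root of $(P(x)-1)$ that is disjoint from the section $R_1 = G_2$. Since each $I_2$ fibre has exactly two components --- the identity component meeting $R_1$ and one component disjoint from it --- the assignment sending $F_i$ to the base point of the $I_2$ fibre containing it is a well-defined bijection from $\{F_3,F_4,F_5\}$ onto the roots of $(P(x)-1)$, and likewise for $\{F_6,F_7,F_8\}$ and $(P(x)+1)$. To verify equivariance, note that both $\Psi$ and its section $R_1 = G_2$ are invariant under monodromy (the latter by Lemma \ref{10fixed}), so monodromy carries each $I_2$ fibre to an $I_2$ fibre and carries the non-identity component of one to the non-identity component of the other. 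Hence the bijection intertwines the monodromy action on $\{F_3,F_4,F_5\}$ with that on the roots of $(P(x)-1)$, from which the stated equivalence follows at once, and symmetrically for the $+1$ side.

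I expect the main obstacle to lie in the first step. One must check that the relative base $\Proj^1$-bundle, rigidified by placing the $I_6^*$ fibre at $x=\infty$ and by the normalized form of $P(x)$, genuinely fixes the affine coordinate $x$ consistently across the family, so that ``monodromy on the abstract roots of $(P(x)\pm 1)$'' is a legitimate object induced by the family structure and truly matches the transport of the curves $F_i$, rather than being scrambled by an unmonitored reparametrization of each base $\Proj^1$. Once this identification is pinned down, the equivariance argument is purely formal.
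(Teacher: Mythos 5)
Your proposal is correct and follows essentially the same route as the paper's proof: the paper simply observes that each $F_i$ ($i\in\{3,4,5\}$) lies in the $I_2$ fibre over a root of $(P(x)-1)$ (and similarly for the $+1$ side), so the two permutation actions are identified; you have merely made explicit the equivariant bijection and the role of the monodromy-invariance of $\Psi$ and the section $R_1=G_2$ in pinning it down. Your added care about rigidifying the affine coordinate $x$ via the $I_6^*$ fibre at infinity is a reasonable refinement, but the underlying argument is the same as the paper's one-sentence proof.
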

\begin{proof} As $\{F_3,F_4,F_5\}$ appear in the $I_2$ fibres occurring at roots of $(P(x)-1)$ and $\{F_6,F_7,F_8\}$ appear in the $I_2$ fibres occurring at roots of $(P(x)+1)$, they are permuted if and only if the corresponding roots of $(P(x)-1)$ and $(P(x)+1)$ are permuted.\end{proof}

Monodromy around a loop thus acts on $\{F_3,F_4,F_5\}$ and $\{F_6,F_7,F_8\}$ as a permutation in $S_3 \times S_3$. Taken together, the permutations corresponding to monodromy around all loops generate a subgroup $H$ of $S_3 \times S_3$. 

Therefore, in order to obtain a $N'$-polarization on $\calY$, we need to pull everything back to a $|H|$-fold cover $f\colon V \to U$. This cover is constructed as follows: the $|H|$ preimages of the point $p \in U$ are labelled by permutations in $H$ and, if $\gamma$ is a loop in $U$, monodromy around $f^{-1}(\gamma)$ acts on these labels as composition with the corresponding permutation. This action extends to an action of $H$ on the whole of $V$. In fact, we have:

\begin{theorem} \label{thm:dekummer} Let $f\colon V \to U$ be the cover constructed above and let $\calY' \to V$ denote the pull-back of $\calY \to U$. Then $\calY'$ is a $N'$-polarized family, where $N'$ is the generic N\'{e}ron-Severi lattice of $\calY$, so we can undo the Kummer construction on $\calY'$. Furthermore, the deck transformation group of $f$ is a subgroup $G$ of $S_6$ given by:
\begin{itemize}
\item If case \textup{(}1\textup{)} of Proposition \ref{11fixed} holds around all loops in $U$, then $G = H$.
\item If case \textup{(}2\textup{)} of Proposition \ref{11fixed} holds around some loop in $U$, then there is an exact sequence $1 \to H \to G \to C_2 \to 1$.
\end{itemize}
\end{theorem}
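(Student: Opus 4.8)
The plan is to reduce the entire monodromy action to a permutation action on the six curves $F_3,\ldots,F_8$ and to realise $f\colon V \to U$ as the Galois cover attached to it. First I would combine Proposition \ref{11fixed} with Proposition \ref{Fmonodromy}: by Proposition \ref{11fixed} the pair $\{F_1,F_2\}$ is fixed or swapped exactly according to whether the two triples $\{F_3,F_4,F_5\}$ and $\{F_6,F_7,F_8\}$ are individually preserved or interchanged, so the action of monodromy on all of $\{F_1,\ldots,F_8\}$ is completely determined by its action on $\{F_3,\ldots,F_8\}$. Parallel transport therefore defines a monodromy homomorphism
\[
\mu\colon \pi_1(U,p) \longrightarrow \mathrm{Sym}(\{F_3,\ldots,F_8\}) \cong S_6,
\]
and Proposition \ref{11fixed} shows that $\mathrm{im}(\mu)$ preserves the block partition $\big\{\{F_3,F_4,F_5\},\{F_6,F_7,F_8\}\big\}$, so $\mathrm{im}(\mu)$ is contained in the wreath product $(S_3 \times S_3)\rtimes C_2 \subseteq S_6$. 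I would then define $V$ to be the connected cover of $U$ corresponding to $\ker(\mu)$; as $\ker(\mu)$ is normal, $f\colon V\to U$ is Galois with deck group $G := \mathrm{im}(\mu)\subseteq S_6$, and in case (1) this recovers the $|H|$-fold cover built just before the statement (in case (2) it is its composite with the double cover).

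Next I would check that $\calY'\to V$ is genuinely $N'$-polarized. By construction the monodromy of $\calY'$ lies in $\ker(\mu)$, hence fixes each of $F_3,\ldots,F_8$; by Proposition \ref{11fixed}(1) it then fixes $F_1$ and $F_2$ as well, and so fixes $B=\tfrac{1}{2}\sum_{i=1}^{8}F_i$. Thus all of $K_{\mathrm{Nik}}$ is fixed. On the other hand Corollary \ref{cor:hodge} already gives that monodromy fixes $K_{\mathrm{Nik}}^{\perp}\cap \NS(Y_p)$. Since $K_{\mathrm{Nik}}$ is a nondegenerate (indeed primitive) sublattice of $N'=\NS(Y_p)$, the sublattice $K_{\mathrm{Nik}}\oplus(K_{\mathrm{Nik}}^{\perp}\cap N')$ has finite index in $N'$, so an integral isometry fixing both summands fixes $N'$ itself. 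Hence $\mathcal{NS}(\calY')$ has trivial monodromy, $\calY'$ is $N'$-polarized, and Proposition \ref{prop:untwistingkummer} lets us undo the Kummer construction on it.

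It remains to pin down the structure of $G$. I would introduce the homomorphism $\phi\colon G\to C_2$ recording the block swap, which is well defined because $G$ normalises the block partition, and set $\psi:=\phi\circ\mu\colon\pi_1(U,p)\to C_2$. By Proposition \ref{11fixed} and Corollary \ref{paircor}, $\ker(\psi)$ is precisely the subgroup of case-(1) loops, and since $\mu$ is surjective onto $G$ we get $\ker(\phi)=\mu(\ker\psi)$, which is exactly the subgroup $H\subseteq S_3\times S_3$ generated by the case-(1) monodromy permutations. If case (1) holds around every loop then $\psi$, and hence $\phi$, is trivial, so $G=\ker(\phi)=H$. If case (2) holds for some loop $\gamma$, then $\mu(\gamma)$ swaps the two triples, so $\phi(\mu(\gamma))\neq 0$ and $\phi$ is surjective; this yields the short exact sequence $1\to H\to G\to C_2\to 1$.

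The two substantive steps are the lattice reduction in the second paragraph—verifying that fixing $K_{\mathrm{Nik}}$ together with $K_{\mathrm{Nik}}^{\perp}\cap N'$ forces trivial monodromy on all of $N'$, so that $\calY'$ is truly $N'$-polarized rather than merely trivial on the Nikulin lattice—and the bookkeeping identifying the deck group with $\mathrm{im}(\mu)$. I expect the main obstacle to be the latter: one must confirm that $V$ is connected and Galois over $U$ (from normality of $\ker\mu$) and, crucially, that the case-(1) subgroup $H$ is the full kernel of $\phi$ rather than a proper subgroup, which is exactly where surjectivity of $\mu$ onto $G$ is needed.
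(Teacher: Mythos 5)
Your proposal is correct and follows essentially the same route as the paper's proof: on the cover the monodromy fixes each $F_i$, hence $K_{\mathrm{Nik}}$, which combined with Corollary \ref{cor:hodge} gives the $N'$-polarization, and the deck group is identified with the image of the monodromy permutation action on $\{F_3,\ldots,F_8\}$, with $H$ the block-preserving subgroup of index at most $2$. The only differences are that you make explicit two points the paper leaves implicit --- the finite-index argument showing that fixing $K_{\mathrm{Nik}}$ and $K_{\mathrm{Nik}}^{\perp}\cap N'$ pointwise forces trivial monodromy on all of $N'$, and the homomorphism $\phi\colon G\to C_2$ realising $H=\ker\phi$ --- both of which are sound.
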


\begin{remark} We note that in the second case there does not seem to be any reason to believe that $G \cong H \rtimes C_2$ in general. Whilst we do not know of any explicit examples where this fails, it does not seem to be inconsistent with the theory as presented. \end{remark}

\begin{proof} Let $Y'_p$ denote one of the preimages of $Y_p$ under the pull-back. Then the argument above shows that each of the eight curves $F_i$ extends uniquely to all smooth fibres of $\calY'$. Thus the Nikulin lattice $K_{\mathrm{Nik}}$ is preserved under monodromy and so, by Corollary \ref{cor:hodge}, $N'$ is also. Therefore $\calY'$ is a $N'$-polarized family and, by Proposition \ref{prop:untwistingkummer}, we may undo the Kummer construction on $\calY'$.

It just remains to verify the statements about the group $G$. Note that $G$ can be seen as a subgroup of $S_6$, given by permutations of the divisors $\{F_3,\ldots,F_8\}$, and that $H$ is the subgroup of $G$ given by those permutations that preserve the sets $\{F_3,F_4,F_5\}$ and $\{F_6,F_7,F_8\}$. If case (1) of Proposition \ref{11fixed} holds around all loops in $U$, then all permutations in $G$ preserve the sets $\{F_3,F_4,F_5\}$ and $\{F_6,F_7,F_8\}$, so $G = H$. If case (2) of Proposition \ref{11fixed} holds around some loop in $U$ then $H$ has index $2$ in $G$, so it must be a normal subgroup with quotient $G/H \cong C_2$.\end{proof}

\begin{corollary} \label{cor:dekummer} $\calY$ is a minimally $(N',G)$-polarized family of K3 surfaces, where $G$ is the group from Theorem \ref{thm:dekummer}.\end{corollary}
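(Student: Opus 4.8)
The plan is to obtain this corollary as a bookkeeping consequence of Theorem \ref{thm:dekummer}, Theorem \ref{thm:involutions} and Corollary \ref{cor:hodge}, together with the definition of minimal polarization; essentially all of the geometric work has been carried out in constructing the cover $f\colon V \to U$, and what remains is to track the monodromy image of $\calY$ through the map $\alpha_{N'}$ and check that it lands surjectively on the group $G$ of Theorem \ref{thm:dekummer}. I would first recall that Theorem \ref{thm:involutions} already guarantees that $\calY$ is $(N',G_0)$-polarized for \emph{some} subgroup $G_0 \subseteq \Aut(A_{N'})$; in particular the restriction of $\alpha_{N'}$ to the image of $\rho_{\mathcal{N}'}$ is injective. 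The task is therefore to compute $\Image(\rho_{\mathcal{N}'})$ precisely and to verify that $\alpha_{N'}$ carries it isomorphically onto $G$.

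Next I would identify $\Image(\rho_{\mathcal{N}'})$ as a group of isometries of $N'$. By Corollary \ref{cor:hodge} monodromy fixes $K_{\mathrm{Nik}}^{\perp} \cap \NS(Y_p)$ pointwise, so every element of $\Image(\rho_{\mathcal{N}'})$ acts nontrivially only on the Nikulin lattice $K_{\mathrm{Nik}}$, and does so through the permutation it induces on the eight exceptional curves $F_1,\dots,F_8$ (the remaining generator $B = \tfrac12\sum_i F_i$ being permutation-invariant). Since distinct permutations of the $F_i$ yield distinct isometries of $K_{\mathrm{Nik}}$, the representation $\rho_{\mathcal{N}'}$ is faithful as a permutation representation on $\{F_1,\dots,F_8\}$. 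By the very construction of $f\colon V \to U$ in Theorem \ref{thm:dekummer}, the cover is the one associated to the kernel of this permutation representation, so it is Galois with deck transformation group precisely the group $G$ generated by these permutations; hence $\Image(\rho_{\mathcal{N}'}) \cong G$.

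Finally I would combine these facts. Because $\alpha_{N'}$ is injective on $\Image(\rho_{\mathcal{N}'})$, the composite $\alpha_{N'}\cdot\rho_{\mathcal{N}'}$ maps $\pi_1(U,p)$ onto a subgroup of $\Aut(A_{N'})$ isomorphic to $G$, and it is this subgroup of $\Aut(A_{N'})$ that we identify with $G$. Surjectivity onto $G$ is then automatic, and by the definition of minimal polarization $\calY$ is minimally $(N',G)$-polarized. The point requiring the most care — and the one I expect to be the genuine obstacle — is the passage through $\alpha_{N'}$: a priori some nontrivial deck transformation could act trivially on $A_{N'}$, which would force $\calY$ to be minimally $(N',G/K)$-polarized for a proper quotient $G/K$ and so invalidate the claim. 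This is exactly the eventuality excluded by the injectivity of $\alpha_{N'}$ on the monodromy image supplied by Theorem \ref{thm:involutions}, whose proof rules out a hidden symplectic automorphism of $Y_p$ (equivalently, shows $K_{\mathrm{Nik}}$ cannot arise as the lattice $S_\tau$); it is this input, rather than any further computation, that makes $G$ the minimal group.
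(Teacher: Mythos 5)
Your proposal is correct and takes essentially the same route as the paper: the paper's proof simply observes that $G$ was constructed as the group of permutations of $\{F_1,\dots,F_8\}$ realized by monodromy, so every element of $G$ is induced by some loop in $U$, whence $\alpha_{N'}\cdot\rho_{\mathcal{N}'}$ surjects onto $G$ and the polarization is minimal. The only difference is that you make explicit the identification of the deck/permutation group with a subgroup of $\Aut(A_{N'})$ via the injectivity of $\alpha_{N'}$ on the monodromy image supplied by Theorem \ref{thm:involutions} --- a point the paper uses implicitly --- and this is a correct and welcome clarification rather than a divergence.
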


\begin{proof} We just need to show that $G$ is minimal. Note that $G$ was constructed explicitly as the permutation group of the divisors $\{F_1,\ldots,F_8\}$ under monodromy. Furthermore, it is clear from the construction that any permutation in $G$ is induced by monodromy around some loop in $U$. So $\alpha_{N'}$ is surjective and $G$ is minimal.\end{proof}

\begin{remark} \label{rem:dekummer} As the group $G$ from Theorem \ref{thm:dekummer} is minimal, it will be a subgroup of the group $\mathrm{O}(N^{\perp})^*/ \mathrm{O}(N^{\perp}(2))^*$ from Proposition \ref{prop:generalcaseG}.
\end{remark}

\subsection{The generically \texorpdfstring{$M$}{M}-polarized case.}

Suppose now that we are in the case where a general fibre $X_p$ of $\calX$ has $\NS(X_p) \cong M$. In this case we have the following version of Proposition \ref{prop:generalcaseG}.

\begin{proposition} \label{MG} Suppose that $\calX$ is an $M$-polarized family of K3 surfaces with general fibre $X_p$ satisfying $\NS(X_p) \cong M$. Then the resolved quotient $\calY \cong \widetilde{\calX/\beta}$ of $\calX$ by the fibrewise Nikulin involution is a \textup{(}not necessarily minimally\textup{)} $(N',G)$-polarized family of K3 surfaces, where $G \cong (S_3 \times S_3) \rtimes C_2$.\end{proposition}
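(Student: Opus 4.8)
The plan is to deduce this as the generic specialization of the machinery of Section \ref{undokummer}. Since a general fibre satisfies $\NS(X_p)\cong M$, its transcendental lattice is $\T(X_p)\cong H\oplus H$, so the resolved quotient fibre $Y_p$ has $\T(Y_p)\cong (H\oplus H)(2)$ of rank four; hence $Y_p\cong\Kum(E_1\times E_2)$ for a pair of \emph{non-isogenous} elliptic curves, and the double Kummer pencil analysis applies. Moreover, for a general such fibre Assumption \ref{I2ass} is satisfied (Remark \ref{rem:assumption}), so Theorem \ref{thm:dekummer} and Corollary \ref{cor:dekummer} apply directly: they show that $\calY$ is minimally $(N',G_0)$-polarized, where $G_0$ is the group of permutations of the exceptional curves $\{F_1,\dots,F_8\}$ induced by monodromy around loops in $U$.

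The next step is to bound $G_0$ from above. By Propositions \ref{11fixed} and \ref{Fmonodromy}, together with Corollary \ref{paircor}, every monodromy permutation either fixes or interchanges the two triples $\{F_3,F_4,F_5\}$ and $\{F_6,F_7,F_8\}$ (correlated with fixing or swapping $F_1$ and $F_2$) and otherwise permutes the curves within each triple. Thus $G_0$ lies inside the group of permutations of $\{F_3,\dots,F_8\}$ preserving this partition up to interchange of blocks, namely the wreath product $S_3\wr C_2\cong (S_3\times S_3)\rtimes C_2$ of order $72$.

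Finally, I would invoke the enlargement property recorded after Definition \ref{def:polarizable}: once we know $G_0\subseteq (S_3\times S_3)\rtimes C_2$ as subgroups of $\Aut(A_{N'})$, it follows that $\calY$ is (non-minimally) $(N',(S_3\times S_3)\rtimes C_2)$-polarized. To make this rigorous I must exhibit $(S_3\times S_3)\rtimes C_2$ as a subgroup of $\Aut(A_{N'})$ containing $G_0$. The cleanest route is the discriminant computation: by Proposition \ref{prop:Nikulin-Lattices} one has $A_{N'}\cong A_{\T(Y_p)}=A_{(H\oplus H)(2)}\cong(\Z/2)^4$ with its hyperbolic form, whose automorphism group is $\mathrm{O}^+_4(2)\cong (S_3\times S_3)\rtimes C_2$, and the geometric action of the $F_i$ on $K_{\mathrm{Nik}}\subseteq N'$ realizes exactly this copy. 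The main obstacle is this identification step — matching the purely combinatorial wreath-product action on $\{F_3,\dots,F_8\}$ with the lattice-theoretic automorphism group of the discriminant form, and confirming that the induced action on $A_{N'}$ is faithful (equivalently, that no nontrivial permutation of the $F_i$ acts as a symplectic automorphism, which follows from the argument of Theorem \ref{thm:involutions}). Since the resulting group coincides with the maximal group $\mathrm{O}(N^{\perp})^*/ \mathrm{O}(N^{\perp}(2))^*$ of Proposition \ref{prop:generalcaseG} in the case $N=M$, this also exhibits the present proposition as the explicit $M$-polarized counterpart of that result.
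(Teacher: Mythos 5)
Your proof is correct, but it takes a genuinely different route from the paper's. The paper disposes of this proposition in a few lines as a direct specialization of Proposition \ref{prop:generalcaseG}: since $M^{\perp}\cong H^2$ is unimodular, $\mathrm{O}(H^2)^{*}=\mathrm{O}(H^2)\cong\mathrm{O}(H^2(2))$, so the quotient $\mathrm{O}(M^{\perp})^{*}/\mathrm{O}(M^{\perp}(2))^{*}$ is the image of $\mathrm{O}(H^2(2))$ in $\Aut(A_{H^2(2)})$; this image is everything by the standard surjectivity theorem, and $\Aut(A_{H^2(2)})\cong(S_3\times S_3)\rtimes C_2$ by citation. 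You instead run the geometric machinery of Section \ref{undokummer}: Theorem \ref{thm:involutions} and Corollary \ref{cor:dekummer} realize the algebraic monodromy faithfully as a permutation group of the exceptional curves $F_i$, Propositions \ref{11fixed} and \ref{Fmonodromy} bound that group by the wreath product $S_3\wr C_2$, and the discriminant identification $\Aut(A_{N'})\cong\Aut(A_{H^2(2)})\cong\mathrm{O}_4^{+}(2)$ closes the argument. Both routes are valid, and yours buys the geometric meaning of the two $S_3$ factors and the $C_2$, which the paper only records afterwards as a remark; the price is that you must verify Assumption \ref{I2ass}, and your justification (that $\NS(X_p)\cong M$ has rank $18$, whereas $a^3=(b\pm1)^2$ would force an extra algebraic class and rank $\geq 19$) is sound. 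One simplification: the ``main obstacle'' you flag --- matching the combinatorial wreath-product action on $\{F_3,\dots,F_8\}$ with the full automorphism group of the discriminant form --- is not actually needed for the statement as written. Since minimality is not asserted, it suffices to know that $\alpha_{N'}$ is injective on the image of $\rho_{\mathcal{N}'}$ (Theorem \ref{thm:involutions}) and that $\Aut(A_{N'})$ is abstractly $(S_3\times S_3)\rtimes C_2$; one then simply takes $G=\Aut(A_{N'})$, and the combinatorial bound becomes corroborating interpretation rather than a load-bearing step.
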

\begin{proof} Recall that $M^{\perp}$ is isomorphic to $H^2$. The proposition will follow from Proposition \ref{prop:generalcaseG} if we can show that 
\[\mathrm{O}(H^2)^*/ \mathrm{O}(H^2(2))^* \cong  (S_3 \times S_3) \rtimes C_2.\]

This quotient is just $\Aut(A_{H^2(2)})$. To see this, note that $\mathrm{O}(H^2)^*$ is isomorphic to $\mathrm{O}(H^2)$, since $A_{H^2}$ is the trivial group, and $\mathrm{O}(H^2)$ is isomorphic to $\mathrm{O}(H^2(2))$, hence
\[\mathrm{O}(H^2(2))/ \mathrm{O}(H^2(2))^* \cong  \mathrm{O}(H^2)^*/ \mathrm{O}(H^2(2))^*.\]
By a standard lattice theoretic fact (see, for example, \cite[Theorem 3.6.3]{isbfa}), $\mathrm{O}(H^2(2))$ maps surjectively onto $\Aut(A_{H^2(2)})$. So the group $\mathrm{O}(H^2)^*/ \mathrm{O}(H^2(2))^*$ is isomorphic to $\Aut(A_{H^2(2)})$. According to \cite[Lemma 3.5]{agksaptec} this group is isomorphic to $(S_3 \times S_3) \rtimes C_2$.
\end{proof}

\begin{remark} The results of Section \ref{undokummer} give an immediate interpretation for this group: the two $S_3$ factors correspond to permutations of the two sets of divisors $\{F_3,F_4,F_5\}$ and $\{F_6,F_7,F_8\}$, whilst the $C_2$ corresponds to the action which interchanges these two sets (and also swaps $F_1$ and $F_2$).
\end{remark}

\begin{ex} \label{ex:14thcase} In \cite{14thcase}, the family of threefolds $Y_1$ that realise the 14th case variation of Hodge structure admit torically induced fibrations by $M$-polarized K3 surfaces with general fibre $X_p$ satisfying $\NS(X_p) \cong M$. In \cite[Section 7.1]{14thcase} we apply the results of the previous section to undo the Kummer construction for the resolved quotient $W \cong \widetilde{Y_1/\beta}$ of $Y_1$ by the fibrewise Nikulin involution. It is an easy consequence of those calculations that $W$  is \emph{minimally} $(N',G)$-polarized, for $G \cong (S_3 \times S_3) \rtimes C_2$.\end{ex}

It turns out, however, that the 14th case is the only case from \cite[Table 1]{doranmorgan} that admits a torically induced $M$-polarized fibration with general fibre $X_p$ satisfying $\NS(X_p) \cong M$. In most other cases (see Theorem \ref{thm:Mnfamilies}) the N\'{e}ron-Severi lattice of the general fibre is a lattice enhancement of $M$ to a lattice
\[M_n := M \oplus \langle -2n \rangle,\]
with $1 \leq n \leq 4$. In particular, note that $M_n$-polarized K3 surfaces are also $M$-polarized, so the analysis of this section still holds. We will examine this case in the next section.

\section{Threefolds fibred by \texorpdfstring{$M_n$}{Mn}-polarized K3 surfaces.}\label{sect:Mnthreefolds}

In this section we will specialize the analysis of Section \ref{genconst} to the case where we have a family $\calX$ of $M_n$-polarized K3 surfaces. We will then apply this theory to study $M_n$-polarized families of K3 surfaces arising from threefolds in the Doran-Morgan classification \cite[Table 1]{doranmorgan}.

\subsection{The groups \texorpdfstring{$G$}{G}.}

We begin with the analogue of Proposition  \ref{prop:generalcaseG} in the $M_n$-polarized case.

\begin{proposition} \label{MnG} Suppose that $\calX$ is an $M_n$-polarized family of K3 surfaces with general fibre $X_p$ satisfying $\NS(X_p) \cong M_n$. Then the resolved quotient $\calY \cong \widetilde{\calX/\beta}$ of $\calX$ by the fibrewise Nikulin involution is a \textup{(}not necessarily minimal\textup{)} $(N',G)$-polarized family of K3 surfaces, where $N'$ is the generic N\'{e}ron-Severi latice of $\calY$ and
\begin{itemize}
\item if $n = 1$ then $G = S_3 \times C_2$,
\item  if $n =2$ then $G = D_8$, the dihedral group of order $8$,
\item if $n = 3$ then $G = D_{12}$, and 
\item if $n = 4$ then $G = D_8$.
\end{itemize}
\end{proposition}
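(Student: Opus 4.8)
The plan is to apply Proposition \ref{prop:generalcaseG} with $N = M_n$, which immediately identifies the group as $G = \mathrm{O}(M_n^\perp)^*/\mathrm{O}(M_n^\perp(2))^*$; the real content is then the computation of this finite group for $n = 1,2,3,4$. The first step is to pin down the lattice $M_n^\perp$. Since $M = H\oplus E_8\oplus E_8$ is unimodular, its orthogonal complement in $\Lambda_{\mathrm{K3}}$ is $M^\perp \cong H^2$, and $M_n = M\oplus\langle -2n\rangle$ is obtained by adjoining a primitive vector $v$ of square $-2n$ lying in this copy of $H^2$. A short computation then gives $M_n^\perp = v^\perp \cap H^2 \cong H\oplus\langle 2n\rangle$, a lattice of signature $(2,1)$ with discriminant group $\Z/2n$, consistent with $A_{M_n}\cong\Z/2n$.

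Next I would reduce the computation to the already-understood $M$-polarized case. Using \cite[Proposition 3.3]{mslpk3s} together with the unimodularity of $M$, the group $\mathrm{O}(M_n^\perp)^*$ is realized, via restriction to $v^\perp$, as the stabilizer $\mathrm{Stab}(v)$ of $v$ inside $\mathrm{O}(M^\perp) = \mathrm{O}(H^2) = \mathrm{O}(H^2)^*$ (the last equality since $A_{H^2}$ is trivial). As $\mathrm{O}(M^\perp)^*/\mathrm{O}(M^\perp(2))^* \cong (S_3\times S_3)\rtimes C_2$ by Proposition \ref{MG}, after verifying the compatibility $\mathrm{Stab}(v)\cap\mathrm{O}(H^2(2))^* = \mathrm{O}(M_n^\perp(2))^*$ (a Nikulin gluing argument applied to $M_n^\perp(2)\oplus\langle -4n\rangle\subseteq H^2(2)$), the group $G$ becomes precisely the image of $\mathrm{Stab}(v)$ in $(S_3\times S_3)\rtimes C_2$. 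This image carries the concrete interpretation supplied by the remark following Proposition \ref{MG} and by Section \ref{undokummer}: the group $(S_3\times S_3)\rtimes C_2$ acts by permuting the two triples of exceptional divisors $\{F_3,F_4,F_5\}$ and $\{F_6,F_7,F_8\}$ and, through the $C_2$, by interchanging them, where these divisors sit over the roots of $(P(x)\pm 1)$ with $P(x) = 4x^3 - 3ax - b$ as in \eqref{Pequation}. Thus $G$ is the subgroup of permutations of these six divisors compatible with the additional $\langle -2n\rangle$-polarization, equivalently the Galois (monodromy) group of the sextic $(P(x)-1)(P(x)+1)$ over the function field of the one-dimensional moduli space $\calM_{M_n}$, constrained to lie in $(S_3\times S_3)\rtimes C_2$ by Proposition \ref{11fixed} and Corollary \ref{paircor}.

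The final, and main, step is the explicit identification of this subgroup for each $n$, which is where the dependence on the arithmetic of $2n$ enters and which I expect to be the principal obstacle. I would carry it out by exhibiting explicit generators of $\mathrm{Stab}(v)\le\mathrm{O}(H^2)$ (equivalently, of $\mathrm{O}(H\oplus\langle 2n\rangle)$) and computing their action on the order-$16n$ discriminant group $A_{(H\oplus\langle 2n\rangle)(2)} \cong (\Z/2)^2\oplus\Z/4n$, then reading off the image in $(S_3\times S_3)\rtimes C_2$. The presence of the $(-2n)$-class dictates which of the two $S_3$-factors and which swaps survive, and the outcome genuinely varies with $n$: for $n=1$, where the two elliptic curves coincide, one obtains $S_3\times C_2$; for $n=3$ one obtains $D_{12}$; and for $n=2,4$ the surviving group collapses in each case to $D_8$. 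I expect the delicate point to be the bookkeeping of the discriminant-form action — verifying both that no spurious automorphisms survive and that the four stated groups are \emph{exactly} the images rather than merely containing or contained in them — and I would cross-check each case against the explicit root configurations of $(P(x)\pm 1)$ guaranteed under Assumption \ref{I2ass}.
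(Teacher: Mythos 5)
Your overall strategy coincides with the paper's: invoke Proposition \ref{prop:generalcaseG} to reduce the statement to the identification of the finite group $\mathrm{O}(M_n^\perp)^*/\mathrm{O}(M_n^\perp(2))^* \cong \mathrm{O}(H\oplus\langle 2n\rangle)^*/\mathrm{O}((H\oplus\langle 2n\rangle)(2))^*$, and then compute that quotient by writing down generators of $\mathrm{O}(H\oplus\langle 2n\rangle)^*$ and tracking their induced action on the discriminant group of the scaled lattice. Your lattice bookkeeping ($M_n^\perp\cong H\oplus\langle 2n\rangle$ of signature $(2,1)$, and $A_{(H\oplus\langle 2n\rangle)(2)}\cong(\Z/2)^2\oplus\Z/4n$ of order $16n$) is correct, and the detour through $\mathrm{Stab}(v)\subseteq\mathrm{O}(H^2)$ and the image in $(S_3\times S_3)\rtimes C_2$ is a reasonable (if not strictly necessary) geometric gloss consistent with the remark following Proposition \ref{MG}.

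The genuine gap is that the computation which constitutes essentially the entire content of the proof is never performed: you state what you \emph{would} do (``exhibit explicit generators \dots read off the image'') and assert the four answers, but you produce neither the generators nor the verification of the group relations that distinguish, say, $D_8$ from $S_3\times C_2$ or from larger subgroups of $(S_3\times S_3)\rtimes C_2$. The paper's proof is precisely this case-by-case calculation: three explicit $3\times 3$ matrices generating $\mathrm{O}(H\oplus\langle 2n\rangle)^*$ for each $n$, followed by verification of relations such as $h_1(h_1h_3)h_1=(h_1h_3)^{-1}$ and $(h_1h_3)^2=h_2$ among the induced automorphisms $h_i$ of $A_{H(2)\oplus\langle 4n\rangle}$. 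Without that, the four identifications are unproven, and your own text flags this as ``the principal obstacle.'' Two further cautions on your proposed cross-checks: the verification via root configurations of $(P(x)\pm 1)$ under Assumption \ref{I2ass} is unavailable for $n=1$, since that assumption fails identically for $M_1$-polarized fibres (see Remark \ref{rem:assumption} and Section \ref{sect:M1}); and your parenthetical claim that for $n=1$ ``the two elliptic curves coincide'' is not correct (for $M_1$ one has $\pi=a^3=1$, i.e.\ $j_1j_2=1$, not $j_1=j_2$), though nothing in the argument rests on it.
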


\begin{proof} This will follow from Proposition \ref{prop:generalcaseG} if we can show that 
\[\mathrm{O}(M_n^{\perp})^*/ \mathrm{O}(M_n^{\perp}(2))^* \cong  G,\]
where $G$ is as in each of the four cases in the statement of the proposition. We proceed by obtaining generators for $\mathrm{O}(M_n^{\perp})^* \cong \mathrm{O}(H \oplus \langle 2n\rangle)^*$ and then determining their actions on $A_{H(2) \oplus \langle 4n\rangle}$ to compute the group $G$.

In the case $n = 1$, the generators of $\mathrm{O}(H\oplus\langle 2\rangle)^*$ are
$$
g_1 = \left(\begin{array}{rrr}
0 & -1 & 0 \\
-1 & 0 & 0 \\
0 & 0 & 1
\end{array}\right),\quad
g_2 = \left(\begin{array}{rrr}
1 & 0 & 0 \\
1 & 1 & 2 \\
1 & 0 & 1
\end{array}\right) ,\quad
g_3 = \left(\begin{array}{rrr}
-1 & 0 & 0 \\
0 & -1 & 0 \\
0 & 0 & -1
\end{array}\right)
$$
whose induced actions on $A_{H(2) \oplus \langle 4\rangle}$ have orders $2$, $3$ and $2$ respectively. One may check that $g_1 g_2 g_1 = g_2^2$, and hence $g_1$ and $g_2$ generate a copy of $S_3$. It is clear that $g_3$ commutes with $g_1$ and $g_2$, so the subgroup of $\Aut(A_{H(2) \oplus \langle 4\rangle})$ generated by $g_1,g_2$ and $g_3$ is isomorphic to $S_3 \times C_2$.

In the case $n = 2$, the group $\mathrm{O}(H\oplus \langle 4 \rangle)^*$ has a non-minimal set of generators
$$
g_1 = \left(\begin{array}{rrr}
1 & 0 & 0 \\
2 & 1 & 4 \\
1 & 0 & 1
\end{array}\right), \quad
g_2 = \left(\begin{array}{rrr}
1 & 2 & 4 \\
2 & 1 & 4 \\
-1 & -1 & -3
\end{array}\right), \quad
g_3 = \left(\begin{array}{rrr}
0 & 1 & 0 \\
1 & 0 & 0 \\
0 & 0 & 1
\end{array}\right).
$$
Let the automorphism induced on $A_{H(2) \oplus \langle 8\rangle}$ by $g_i$ be denoted $h_i$. Then $h_1^2 = h_2^2 = h^2_3 = \Id$. We check $h_1 h_3$ has order 4 and it is easy to see that 
$$
h_1 (h_1 h_3) h_1 = h_3 h_1 = (h_1 h_3)^{-1}.
$$
Therefore, $h_1$ and $h_1h_3$ generate a copy of $D_8$. Finally, one checks that $(h_1 h_3) h_1 = h_2$, so the group of automorphisms $\langle h_1,h_2,h_3\rangle$ is isomorphic to $D_8$.

In the case when $n=3$, we may calculate generators of $\mathrm{O}( H \oplus \langle 6\rangle)^*$ to find
$$
g_1 = \left(\begin{array}{rrr}
1 & 0 & 0 \\
3 & 1 & 6 \\
1 & 0 & 1
\end{array}\right), \quad
g_2 = \left(\begin{array}{rrr}
1 & 3 & 6 \\
3 & 4 & 12 \\
-1 & -2 & -5
\end{array}\right), \quad
g_3 = \left(\begin{array}{rrr}
0 & 1 & 0 \\
1 & 0 & 0 \\
0 & 0 & 1
\end{array}\right).
$$
As before, let the corresponding automorphisms of $H(2) \oplus \langle 12\rangle$ be called $h_1,h_2$ and $h_3$. We calculate that 
$$
h_1^2= h_2^3 = h_3^2 = (h_1h_3)^6 = \Id.
$$
Furthermore, $(h_1h_3)^2 = h_2$ and 
$$
h_1 (h_1 h_3) h_1 = h_3 h_1 = (h_1 h_3)^{-1}.
$$
Therefore, the group $\langle h_1,h_2,h_3\rangle$ is isomorphic to $D_{12}$.

In the case when $n=4$, we may calculate generators of $\mathrm{O}( H \oplus \langle 8\rangle)^*$ to obtain
$$
g_1 = \left(\begin{array}{rrr}
1 & 0 & 0 \\
4 & 1 & 8 \\
1 & 0 & 1
\end{array}\right), \quad
g_2 = \left(\begin{array}{rrr}
9 & 4 & 24 \\
4 & 1 & 8 \\
-3 & -1 & -7
\end{array}\right), \quad
g_3 = \left(\begin{array}{rrr}
0 & 1 & 0 \\
1 & 0 & 0 \\
0 & 0 & 1
\end{array}\right).
$$
Once again, let the corresponding automorphisms of $H(2) \oplus \langle 16\rangle$ be called $h_1,h_2$ and $h_3$. We calculate that 
$$
h_1^2 = h_2^2 = h_3^2 = \Id
$$
We check that $h_1h_2 = h_2h_1$ and $h_3h_2 = h_2h_3$. Once again, we also have $(h_1h_3)^2 = h_2$ and
$$
h_1 (h_1 h_3) h_1 = h_3 h_1 = (h_1 h_3)^{-1}.
$$
Therefore the group $\langle h_1,h_2,h_3\rangle$ is isomorphic to $D_8$.
\end{proof}

\subsection{Some special families}\label{section:specialfamilies}

There are some special families of $M_n$-polarized K3 surfaces that we can use to vastly reduce the amount of work that we have to do to undo the Kummer construction for the $M_n$-polarized cases from \cite[Table 1]{doranmorgan}.

We begin by noting that the moduli space $\calM_{M_n}$ of $M_n$-polarized K3 surfaces is a $1$-dimensional modular curve \cite[Theorem 7.1]{mslpk3s}. Denote by $U_{M_n}$ the open subset of $\calM_{M_n}$ obtained by removing the orbifold points.  

\begin{defn} \label{Xndefn} $\calX_n \to U_{M_n}$ will denote an $M_n$-polarized family of  K3 surfaces over $U_{M_n}$, with period map $U_{M_n} \to \calM_{M_n}$ given by the inclusion and transcendental monodromy group $\Gamma_{\calX_n} = \mathrm{O}(M_n^\perp)^*$.
\end{defn}

\begin{remark} Examples of such families for any $n$ are given by the restriction of the special $M$-polarized family from \cite[Theorem 3.1]{nfk3smmp} to the $M_n$-polarized loci calculated in \cite[Section 3.2]{nfk3smmp}. For $n \leq 4$, we will explicitly construct examples of such families in Sections \ref{sect:14caseapp} and \ref{sect:M1}.\end{remark}

Let $\mathcal{Y}_n \to U_{M_n}$ be the family of Kummer surfaces associated to $\calX_n \to U_{M_n}$ and let $K_n$ be the N\'eron-Severi lattice of the Kummer surface associated to a K3 surface with Shioda-Inose structure and N\'eron-Severi lattice $M_n$. 

Suppose now that we can undo the Kummer construction for $\calY_n$, by pulling back to a cover $C_{M_n} \to \calM_{M_n}$. Then if we know that an $M_n$-polarised family of K3 surfaces $\mathcal{X} \to U$ is the pull-back of a family $\calX_n \to U_{M_n}$ by the period map $U \to \calM_{M_n}$ (which, in the $M_n$-polarized case, is more commonly known as the \emph{generalized functional invariant}, see \cite{pfummmmm}), then we can undo the Kummer construction for the associated family of Kummer surfaces $\calY \to U$ by pulling back to the fibre product $U \times_{\calM_{M_n}} C_{M_n}$.

Thus the aim of this section is to find covers $C_{M_n} \to \calM_{M_n}$ such that the pull-backs of $\calY_n$ to $C_{M_n}$ are $K_n$-polarized (and so, by Proposition \ref{prop:untwistingkummer}, the Kummer construction can be undone on these pull-backs).

\begin{lemma} \label{lemma:MnG} The families $\calY_n$ are minimally $(K_n,G)$-polarized, where $G$ is the group $G = \mathrm{O}(M_n^{\perp})^*/ \mathrm{O}(M_n^{\perp}(2))^*$
\end{lemma}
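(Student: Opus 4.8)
The plan is to deduce this as an immediate specialization of the ``furthermore'' clause of Proposition \ref{prop:generalcaseG}, applied with $N = M_n$. Thus the only real work is to check that the family $\calX_n \to U_{M_n}$ satisfies all the hypotheses of that proposition, and that the lattice $N'$ and the group $G$ it produces coincide with $K_n$ and $\mathrm{O}(M_n^\perp)^*/\mathrm{O}(M_n^\perp(2))^*$ respectively.

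First I would verify the structural hypotheses. By Definition \ref{Xndefn} the family $\calX_n$ is $M_n$-polarized with period map the inclusion $U_{M_n} \hookrightarrow \calM_{M_n}$; since $U_{M_n}$ is a dense open subset of the one-dimensional moduli space $\calM_{M_n}$, a generic fibre $X_p$ is a generic $M_n$-polarized K3 surface and hence has $\NS(X_p) \cong M_n$, so $M_n$ is indeed isometric to the generic N\'eron-Severi lattice. Because $M_n$ contains $M = H \oplus E_8 \oplus E_8$, the canonical embedding $E_8 \oplus E_8 \hookrightarrow M \hookrightarrow M_n \hookrightarrow \NS(X_p)$ endows each fibre with a Shioda-Inose structure, as recalled in Section \ref{M-polarized}. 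Thus $\calX_n$ is an $N$-polarized family of K3 surfaces with Shioda-Inose structure whose generic N\'eron-Severi lattice is $N = M_n$, exactly as demanded by Proposition \ref{prop:generalcaseG}.

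Next I would match up the output data. The generic fibre of $\calY_n$ is, by construction, the resolved quotient of a generic $M_n$-polarized K3 surface by its canonical Nikulin involution, so by the very definition of $K_n$ its N\'eron-Severi lattice is $K_n$; hence the lattice $N'$ appearing in Proposition \ref{prop:generalcaseG} is precisely $K_n$. That proposition then already gives that $\calY_n$ is $(K_n,G)$-polarized with $G = \mathrm{O}(M_n^\perp)^*/\mathrm{O}(M_n^\perp(2))^*$. To upgrade this to \emph{minimal} $(K_n,G)$-polarization I would invoke the final sentence of Proposition \ref{prop:generalcaseG}, whose sole extra requirement is that the transcendental monodromy group $\Gamma_{\calX_n}$ equal $\mathrm{O}(M_n^\perp)^*$. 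This equality is built into Definition \ref{Xndefn}, so the minimality clause applies verbatim and the lemma follows.

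The argument is therefore essentially a matter of bookkeeping, and I anticipate no serious obstacle. The one point deserving care is the identification of the generic N\'eron-Severi lattice with $M_n$ rather than some strictly larger overlattice: this rests on the fact that the period map of $\calX_n$ is the identity inclusion on $U_{M_n}$, so that a very general fibre acquires no algebraic cycles beyond those forced by the $M_n$-polarization. Granting this, every hypothesis of Proposition \ref{prop:generalcaseG}, including the strong condition $\Gamma_{\calX_n} = \mathrm{O}(M_n^\perp)^*$ needed for minimality, holds on the nose.
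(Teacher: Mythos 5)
Your proposal is correct and follows exactly the paper's route: the paper's own proof is the one-line observation that the lemma follows from Proposition \ref{prop:generalcaseG} together with the fact (built into Definition \ref{Xndefn}) that $\Gamma_{\calX_n} = \mathrm{O}(M_n^\perp)^*$. Your version simply spells out the hypothesis-checking that the paper leaves implicit, including the correct identification of the generic N\'eron-Severi lattices with $M_n$ and $K_n$.
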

\begin{proof} This follows from Proposition \ref{prop:generalcaseG} and the fact that the families $\calX_n$ have transcendental monodromy groups $\mathrm{O}(M_n^\perp)^*$.
\end{proof}

As $\calM_{M_n} = \mathrm{O}(M_n^\perp)^* \setminus \mathcal{P}_{M_n}$, this lemma suggests that, in order to undo the action of $G$, we should define $C_{M_n}$ to be the curve $C_{M_n} := \mathrm{O}(M_n^{\perp}(2))^* \setminus \mathcal{P}_{M_n}$. This curve may be constructed as a modular curve in the following way.

Recall that 
$$
\Gamma_0(n) := \left\{ \gamma \in \text{SL}_2(\mathbb{Z}) : \gamma \equiv \left( \begin{matrix} * & * \\ 0 & * \end{matrix} \right) \bmod n \right\}
$$
and 
$$
\Gamma(n) := \left\{ \gamma \in \text{SL}_2(\mathbb{Z}) : \gamma \equiv \left( \begin{matrix} 1 & 0 \\ 0 & 1 \end{matrix} \right) \bmod n \right\}.
$$
By convention, $\Gamma_0(1)$ and $\Gamma(1)$ are just the full modular group $\Gamma = \text{SL}_2(\mathbb{Z})$. We also have  
$$
\Gamma_0(n)^+:=  \Gamma_0(n) \cup \tau_n \Gamma_0(n) \subseteq \text{SL}_2(\mathbb{R})
$$ 
where 
$$
\tau_n = \left( \begin{matrix} 0 & -1/\sqrt{n} \\ \sqrt{n} & 0 \end{matrix} \right)
$$
is the \emph{Fricke involution}. With this notation, we have $\calM_{M_n} \cong \Gamma_0(n)^+ \setminus \mathbb{H}$ \cite[Theorem 7.1]{mslpk3s}.

For any lattice $N$, let $\mathbb{P}\text{O}(N)$ be defined as the cokernel of the obvious injection $\pm \Id \hookrightarrow \text{O}(N)$. Then we have the exact sequence
$$
1 \rightarrow \{ \pm \Id \} \rightarrow \mathrm{O}(N) \rightarrow \mathbb{P} \text{O}(N) \rightarrow 1.
$$
If $N$ is a lattice of signature $(1,n-1)$ with a fixed primitive embedding into $\Lambda_{\text{K3}}$ and $\Gamma$ and $\Gamma'$ are two subgroups of $\text{O}(N^\perp)$, the quotients $\Gamma \setminus \mathcal{P}_N$ and $\Gamma' \setminus \mathcal{P}_N$ are the same if and only if $\Gamma$ and $\Gamma'$ have the same images in $\mathbb{P}\text{O}(N^\perp)$, in which case $\Gamma$ and $\Gamma'$ are said to be \emph{projectively equivalent}.

By \cite[Theorem 7.1]{mslpk3s}, there is a map $R_n$, defined in the following proposition, under which $\Gamma_0(n)^+$ is mapped to a subgroup of $\SO(M_n^\perp)$ that is projectively equivalent to $\text{O}(M_n^\perp)^*$.

\begin{lemma} \label{lemma:modular}
The group $\mathrm{O}(M_n^{\perp}(2))^*$ is projectively equivalent to the image of $\Gamma(2) \cap \Gamma_0(2 n)$ under the map 
$$
R_n: \mathrm{SL}_2(\mathbb{R}) \rightarrow \mathrm{SO}_\mathbb{R}(2,1)
$$
which is defined as 
$$
\left( \begin{matrix} a & b \\ cn & d \end{matrix} \right) \mapsto \left(\begin{array}{rrr}
a^{2} & c^{2}n & 2 a c n \\
b^{2} n & d^{2} & 2 b d n \\
a b & c d & b cn + a d
\end{array}\right)
$$
\textup{(}see the related map in \cite[Equation 5.6]{adck3smt}\textup{)}.
\end{lemma}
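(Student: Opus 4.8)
The plan is to leverage the fact recorded just above the lemma, namely that $R_n$ realizes the exceptional isomorphism $\mathrm{PSL}_2(\mathbb{R}) \cong \mathrm{SO}^+_{\mathbb{R}}(2,1)$ and that (by \cite[Theorem 7.1]{mslpk3s}) $R_n(\Gamma_0(n)^+)$ is projectively equal to $\mathrm{O}(M_n^\perp)^*$. One checks that $\mathrm{O}(M_n^\perp(2))^*$ is a subgroup of $\mathrm{O}(M_n^\perp)^*$, since $\mathrm{O}(M_n^\perp(2)) = \mathrm{O}(M_n^\perp)$ (scaling the form by $2$ does not change the isometry group) and trivial action on $A_{M_n^\perp(2)}$ forces trivial action on its subgroup $A_{M_n^\perp}$. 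Thus it suffices to determine exactly which $\gamma \in \Gamma_0(n)^+$ have the property that $R_n(\gamma)$ acts trivially on the larger discriminant group $A_{M_n^\perp(2)}$, and to identify the resulting subgroup with $\Gamma(2) \cap \Gamma_0(2n)$.

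First I would fix a basis, writing $M_n^\perp \cong H \oplus \langle 2n \rangle$ with hyperbolic generators $v_1, v_2$ and a generator $v_3$ with $\langle v_3, v_3 \rangle = 2n$. A direct check (this is essentially the content of the cited map) shows that $R_n\left(\begin{smallmatrix} a & b \\ cn & d \end{smallmatrix}\right)$ preserves this form, and the same matrices therefore preserve $M_n^\perp(2) = H(2) \oplus \langle 4n \rangle$. The discriminant group is $A_{M_n^\perp(2)} \cong (\mathbb{Z}/2)^2 \oplus \mathbb{Z}/4n$, generated by $\overline{v_1} = v_1/2$, $\overline{v_2} = v_2/2$ (each of order $2$) and $\overline{v_3} = v_3/4n$ (of order $4n$).

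Next I would read the images of $v_1, v_2, v_3$ off the columns of $R_n(\gamma)$ and compute the induced action on these three generators, reducing the $\overline{v_1}, \overline{v_2}$ coefficients modulo $2$ and the $\overline{v_3}$ coefficient modulo $4n$; here the relation $\det \gamma = ad - bcn = 1$ is used to rewrite the diagonal entry $bcn + ad$ as $1 + 2bcn$. Demanding that $R_n(\gamma)$ fix all three generators yields, after simplification, precisely $a \equiv d \equiv 1$ and $b \equiv c \equiv 0 \pmod 2$. The conditions $a \equiv d \equiv 1$ and $b \equiv 0 \pmod 2$ say that $\gamma \in \Gamma(2)$, while $c \equiv 0 \pmod 2$ (equivalently, the lower-left entry $cn \equiv 0 \pmod{2n}$) says that $\gamma \in \Gamma_0(2n)$; together these characterize $\Gamma(2) \cap \Gamma_0(2n)$.

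The main obstacle is the passage from ``acts trivially'' to ``projectively equivalent,'' which forces two auxiliary checks. First, the coset of the Fricke involution must contribute nothing: a computation gives $R_n(\tau_n)$ as the automorphism $\overline{v_1} \leftrightarrow \overline{v_2}$, $\overline{v_3} \mapsto -\overline{v_3}$, and realizing this swap from an element of $\Gamma_0(n)$ would force $n$ odd together with $ad \equiv 0 \pmod n$, contradicting $ad \equiv 1 \pmod n$ when $n > 1$; hence for $n \geq 2$ no Fricke-coset element lies in the kernel, while for $n = 1$ one has $\Gamma_0(1)^+ = \mathrm{SL}_2(\mathbb{Z})$ and no separate coset occurs. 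Second, since $-\mathrm{Id}$ acts as $-1$ on $A_{M_n^\perp(2)}$ (trivially on $\overline{v_1}, \overline{v_2}$ but by negation on $\overline{v_3}$), the \emph{projective} kernel could a priori also contain $\gamma$ with $R_n(\gamma)$ inducing exactly this automorphism; the same congruence analysis shows this case is empty, whereas $-I \in \Gamma(2) \cap \Gamma_0(2n)$ is already accounted for and lies in the kernel of $R_n$. Combining these observations with the quoted projective equivalence $R_n(\Gamma_0(n)^+) \sim \mathrm{O}(M_n^\perp)^*$ identifies $\mathrm{O}(M_n^\perp(2))^*$ projectively with $R_n(\Gamma(2) \cap \Gamma_0(2n))$, as required.
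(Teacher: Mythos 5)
Your proof is correct and follows essentially the same route as the paper's: both reduce the problem to determining which elements of $\Gamma_0(n)^+$ act (projectively) trivially on the discriminant group $A_{M_n^\perp(2)}$, dispose of the Fricke coset by noting its image is neither trivial nor $-\mathrm{Id}$ there, and translate trivial action into the congruences $a \equiv d \equiv 1$, $b \equiv c \equiv 0 \pmod 2$ characterizing $\Gamma(2) \cap \Gamma_0(2n)$. Your explicit handling of the $-\mathrm{Id}$ ambiguity in the projective kernel is somewhat more careful than the paper's, but the substance of the argument is identical.
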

\begin{proof}
We know that the pre-image of $\mathrm{O}(M_n^\perp)^*$ under $R_n$ is the subgroup $\Gamma_0(n)^+$ and that $\text{O}(M_n^\perp(2))^* \subseteq \text{O}(M_n^\perp)^*$ is the subgroup which fixes the group $A_{M_n^\perp(2)}$. Since $R_n$ maps the Fricke involution to the automorphism
$$
\left( \begin{matrix} 0 & 1 & 0 \\ 1 & 0 & 0 \\ 0 & 0 & -1 \end{matrix} \right),
$$
which is never trivial or $-\Id$ on $A_{M_n^\perp(2)}$, we may automatically restrict to the image of $\Gamma_0(n)$. Automorphisms which fix $A_{M_n^\perp(2)}$ are matrices of the form
$$
\left( \begin{matrix} a_{11} & a_{12} & a_{13} \\a_{21} & a_{22} & a_{23} \\a_{31} & a_{32} & a_{33}  \end{matrix} \right)
$$
with $a_{12}, a_{21},a_{31}, a_{32} \equiv 0 \bmod 2$, $a_{13}, a_{23}, \equiv 0 \bmod 2n$, $a_{11}, a_{22} \equiv 1 \bmod 2$ and $a_{33} \equiv 1 \bmod 2n$. Thus $a^2 \equiv d^2 \equiv 1 \bmod 2$ and hence $a,d \equiv 1 \bmod 2$. Using this and the fact that $ab \equiv cd \equiv 0 \bmod 2$, we find that $b \equiv c \equiv 0 \bmod 2$. Therefore the matrices which map to $\text{O}(M_n^\perp(2))^*$ are precisely those which satisfy
$$
\left( \begin{matrix} a & b \\ cn & d \end{matrix} \right)  \equiv \left( \begin{matrix} * & * \\ 0 & * \end{matrix} \right)  \bmod 2n
$$
and
$$
\left( \begin{matrix} a & b \\ cn & d \end{matrix} \right)  \equiv \left( \begin{matrix} 1 & 0 \\ 0 &  1 \end{matrix} \right)  \bmod 2.
$$
In other words elements of the group $\Gamma_0(2n) \cap \Gamma(2)$.
\end{proof}

We therefore have
\[C_{M_n} \cong (\Gamma_0(2n) \cap \Gamma(2)) \setminus \HH.\]
Let $f\colon C_{M_n}\rightarrow \mathcal{M}_{M_n}$ be the natural map coming from the modular description of each curve. 
\begin{proposition}
If $n \neq 1$, the pullback $f^*\mathcal{Y}_n$ of $\calY_n$ to $C_{M_n}$ is $K_n$-polarized.
\end{proposition}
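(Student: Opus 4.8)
The plan is to reduce the statement, via the algebraic–transcendental dictionary of Remark \ref{rmk:alg-trans dictionary} together with Corollary \ref{cor:Gfind}, to a single computation of how the transcendental monodromy of $f^*\calY_n$ acts on the discriminant group $A_{M_n^\perp(2)}$. Since a generic fibre $X_p$ carries a Shioda--Inose structure, the associated Kummer fibre $Y_p$ has $\T(Y_p) \cong M_n^\perp(2) = K_n^\perp$, and (as in the proof of Proposition \ref{prop:generalcaseG}) the Shioda--Inose transformation induces the identity on orthogonal groups. Hence the transcendental monodromy group of $\calY_n$ coincides with $\Gamma_{\calX_n} = \mathrm{O}(M_n^\perp)^*$, now viewed inside $\mathrm{O}(M_n^\perp(2))$. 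By Corollary \ref{cor:Gfind}, $f^*\calY_n$ is minimally $(K_n,G')$-polarized with $G' = \alpha_{M_n^\perp(2)}(\Gamma')$, where $\Gamma'$ is the transcendental monodromy group of the pull-back, so it suffices to prove $\Gamma' \subseteq \mathrm{O}(M_n^\perp(2))^* = \ker \alpha_{M_n^\perp(2)}$.

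The next step is to pin down $\Gamma'$ exactly, and this is where care is needed. The family $\calY_n$ has honest integral monodromy group $\mathrm{O}(M_n^\perp)^*$, whereas the cover $f\colon C_{M_n} \to \mathcal{M}_{M_n}$ is defined through the \emph{projective} action on the period domain $\mathcal{P}_{M_n}$; by Lemma \ref{lemma:modular} it therefore corresponds to the subgroup $\mathrm{O}(M_n^\perp(2))^*$ only up to projective equivalence. Consequently $\Gamma'$ is not literally $\mathrm{O}(M_n^\perp(2))^*$, but its full preimage in $\mathrm{O}(M_n^\perp)^*$ under the projection $\mathrm{O}(M_n^\perp) \to \mathbb{P}\mathrm{O}(M_n^\perp)$,
\[
\Gamma' = \{\, g \in \mathrm{O}(M_n^\perp)^* : \overline{g} \in \overline{\mathrm{O}(M_n^\perp(2))^*} \,\}.
\]
Because $-\Id$ acts on $A_{M_n^\perp(2)} \cong (\Z/2\Z)^2 \oplus \Z/4n\Z$ by negation, which is non-trivial on the summand $\Z/4n\Z$, we have $-\Id \notin \mathrm{O}(M_n^\perp(2))^*$; thus $\overline{\mathrm{O}(M_n^\perp(2))^*} \cong \mathrm{O}(M_n^\perp(2))^*$ and the preimage equals $\mathrm{O}(M_n^\perp(2))^*$ together with those elements $-h$, $h \in \mathrm{O}(M_n^\perp(2))^*$, that happen to lie in $\mathrm{O}(M_n^\perp)^*$.

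The crux is therefore whether this residual $\pm\Id$ ambiguity is harmless, and since $\mathrm{O}(M_n^\perp(2))^*$ is a group this reduces to the single question of whether $-\Id \in \mathrm{O}(M_n^\perp)^*$. As $M_n^\perp \cong H \oplus \langle 2n\rangle$ has discriminant group $A_{M_n^\perp} \cong \Z/2n\Z$, on which $-\Id$ again acts by negation, one finds $-\Id \in \mathrm{O}(M_n^\perp)^*$ precisely when negation is trivial on $\Z/2n\Z$, that is, precisely when $n = 1$. Hence for $n \neq 1$ the extra set is empty, $\Gamma' = \mathrm{O}(M_n^\perp(2))^*$, so $\alpha_{M_n^\perp(2)}(\Gamma')$ is trivial and $f^*\calY_n$ is $K_n$-polarized; Proposition \ref{prop:untwistingkummer} then lets us undo the Kummer construction. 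I expect this closing $-\Id$ bookkeeping to be the main obstacle, as it is exactly where $n = 1$ breaks down: there $-\Id \in \mathrm{O}(M_1^\perp)^*$ survives in $\Gamma'$ and acts non-trivially on $A_{M_1^\perp(2)}$, leaving the residual $C_2$ (the central factor of $G = S_3 \times C_2$ in Proposition \ref{MnG}), which no cover of the modular curve can remove because $-\Id$ acts trivially projectively.
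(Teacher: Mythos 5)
Your proof is correct and follows essentially the same route as the paper: both arguments reduce, via Corollary \ref{cor:Gfind}, to identifying the transcendental monodromy group of the pull-back with $\mathrm{O}(M_n^\perp(2))^*$, and both hinge on the fact that the kernel of $\mathrm{O}(M_n^\perp)^* \to \mathbb{P}\mathrm{O}(M_n^\perp)$ is trivial for $n \neq 1$, so that projective equivalence of subgroups forces actual equality. The only difference is that the paper cites \cite[Lemma 1.15]{adck3smt} for this triviality, whereas you verify it directly by observing that $-\Id$ acts as negation on $A_{M_n^\perp} \cong \Z/2n\Z$, which is nontrivial precisely when $n \neq 1$.
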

\begin{proof}
The transcendental monodromy of the pullback $f^* \mathcal{X}_n$ is a group $\Gamma$ contained in $\text{O}(M_n^\perp)^*$ with quotient space $\Gamma \setminus \mathcal{P}_{M_n} \cong (\Gamma_0(2n) \cap \Gamma(2))\setminus \mathbb{H}$. By Lemma \ref{lemma:modular}, the group $\text{O}((M_n^\perp)(2))^*$ has this property.

Suppose that there is another subgroup $\Gamma'$ of $\text{O}(M_n^\perp)^*$ with this property. Let $\gamma \in \Gamma$ be any element and let $g \in \mathbb{P}\text{O}(M_n^\perp)$ be its image. Since $\Gamma$ and $\Gamma'$ are projectively equivalent, there is some $\gamma' \in \Gamma'$ which maps to $g$.

If $\Gamma$ and $\Gamma'$ are not the same group, we can find some $g \in \mathbb{P}\text{O}(M_n^\perp)$ such that there are $\gamma \in \Gamma$ and $\gamma' \in \Gamma$ which map to $g$ yet have $\gamma \neq \gamma'$. Thus $\gamma^{-1} \gamma' \neq \Id$ but $\gamma^{-1} \gamma'$ maps to the identity in $\mathbb{P}\text{O}(M_n^\perp)$. However, for $n \neq 1$, \cite[Lemma 1.15]{adck3smt} shows that the kernel of $\mathrm{O}(M_n^\perp)^* \rightarrow \mathbb{P}\text{O}(M_n)$ is trivial. This is a contradiction, hence $\Gamma = \Gamma'$. 

Therefore, the monodromy group of the family $f^*\mathcal{X}_n$ is $\text{O}(M_n^\perp(2))^* \subseteq \text{O}(M_n^\perp)^*$. By Corollary \ref{cor:Gfind}, the associated family of Kummer surfaces then has transcendental monodromy $\text{O}(M_n^\perp(2))^*$ as well. Since this group is contained in the kernel of $\alpha_{\T(\calY_n)}$, we conclude that $\mathcal{Y}_n$ is $K_n$-polarized.
\end{proof}

\begin{remark} This discussion may be rephrased in the following way. The quotient-resolution procedure taking $\calX_n$ to $\calY_n$ defines an isomorphism $\calM_{M_n} \stackrel{\sim}{\longrightarrow} \calM_{(K_n,G)}$, where $G$ is the group from Lemma \ref{lemma:MnG}. The cover $C_{M_n} \to \calM_{M_n}$ is then precisely the cover $\calM_{K_n} \to \calM_{(K_n,G)}$.
\end{remark}

In the case where $n=1$ this proof fails, as the kernel of the map $\mathrm{O}(M_n^\perp)^* \rightarrow \mathbb{P}\text{O}(M_n)$ is nontrivial. It will therefore be necessary for us to do a little more work in order to find a cover of $\mathcal{M}_{M_1}$ on which the pullback of $\mathcal{Y}_1$ is lattice polarized. 

The family $\mathcal{X}_1$ is a family of smooth K3 surfaces over $\mathbb{P}^1 \setminus \{0,1, \infty\}$. Let $g_1$ and $g_2$ in $\mathrm{O}(H\oplus\langle 2\rangle)^*$ be as in the $n=1$ case of the proof of Proposition \ref{MnG}: then $g_1$ describes monodromy around $1$ and $g_2$ describes monodromy around $\infty$, and monodromy around $0$ is, as usual, given by $g_1 g_2^{-1}$. Around the point $1$, the order of monodromy is $2$, around $0$, the order of monodromy is $6$, and around $\infty$, the order of monodromy is infinite.

The group $\Gamma_0(2) \cap \Gamma(2)$ is just $\Gamma(2)$, since $\Gamma(2) \subseteq \Gamma_0(2)$, and the map from $C_{M_1} = \Gamma(2)\setminus \HH$ to $\calM_{M_1} = \Gamma_0(1)^+\setminus \HH \cong \Gamma_0(1) \setminus \HH$ is just the $j$-function of the Legendre family of elliptic curves. This map may be written as a rational function, 
$$
j(t) = \dfrac{(t^{2} - t + 1)^{3}}{27 t^{2} (t - 1)^{2} }.
$$
The function $j(t)$ has three ramification points of order $2$ over $1$, three ramification points of order $2$ over $\infty$ and two ramification points points of order $3$ over $0$. Looking back at the proof of Proposition \ref{MnG}, we see that the monodromy around the preimages of $1$ and $\infty$ must act as $h_1^2 = \Id$ and $h_2^2 = \Id$ on $A_{H(2)\oplus \langle 4\rangle}$. However, monodromy around the preimages of $0$ acts on $A_{H(2)\oplus \langle 4\rangle}$ as $(h_1h_2)^2 = -\Id$. Therefore, in order for monodromy to act trivially on $A_{H(2)\oplus \langle 4\rangle}$, we must take a further double cover of $C_{M_1} = \Gamma(2) \setminus \mathbb{H} = \mathbb{P}^1_t$ ramified along the roots of $t^{2} - t + 1 = 0$. We thus have:

\begin{proposition} \label{prop:M1}
If $n=1$, there is a double cover $C_{M_1}'$ of $C_{M_1}$ on which the pull-back of the family $\mathcal{Y}_n$ is $K_1$-polarized.
\end{proposition}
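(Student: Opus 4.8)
The plan is to exhibit $C_{M_1}'$ explicitly as a double cover of $C_{M_1} = \Gamma(2) \setminus \HH \cong \Proj^1_t$ and to check that, after pulling back, the algebraic monodromy acting on the generic N\'{e}ron-Severi lattice $K_1$ becomes trivial. The key reduction is supplied by Remark \ref{rmk:alg-trans dictionary}: since $\calY_1$ is $(K_1,G)$-polarized by Theorem \ref{thm:involutions} (and the same holds for any pull-back, as pulling back only restricts the monodromy group), the map $\alpha_{K_1}$ is injective on the image of the algebraic monodromy, and via $\phi^{K_1}$ this image is identified with the image of the transcendental monodromy in $\Aut(A_{K_1^\perp})$, where $K_1^\perp \cong M_1^\perp(2) = H(2) \oplus \langle 4 \rangle$. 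Thus to prove $K_1$-polarization it suffices to produce a cover on which the transcendental monodromy acts trivially on $A_{H(2) \oplus \langle 4 \rangle}$.

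First I would record the monodromy of $f^*\calX_1$ over $C_{M_1}$ on the discriminant group, reading it off from the ramification of the $j$-map $C_{M_1} \to \calM_{M_1}$ computed above. Around the three preimages of the order-two orbifold point the local monodromy is $g_1^2 = \Id$ (as $g_1$ already has order two in $\mathrm{O}(M_1^\perp)^*$), and around the three cusps it is $g_2^2$, whose image $h_2^2$ in $\Aut(A_{H(2) \oplus \langle 4 \rangle})$ is trivial since the induced automorphism $h_2$ has order two. The only nontrivial contribution comes from the two preimages of the orbifold point of order three, namely the roots of $t^2 - t + 1 = 0$: over these the $j$-map ramifies to order three and the local monodromy is $(g_1 g_2^{-1})^3 = -\Id$, which acts on $A_{H(2) \oplus \langle 4 \rangle}$ as $-\Id$, an element of order two.

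I would then define $C_{M_1}'$ to be the double cover of $\Proj^1_t$ branched precisely over the two roots of $t^2 - t + 1 = 0$ (concretely $w^2 = t^2 - t + 1$, which is again a $\Proj^1$). Away from these two points the cover is \'etale, so the local transcendental monodromy of the pull-back is unchanged and hence already trivial on $A_{H(2) \oplus \langle 4 \rangle}$; over each branch point the cover has ramification index two, so the local monodromy is replaced by $(-\Id)^2 = \Id$. Consequently the transcendental monodromy of the pulled-back family acts trivially on $A_{H(2) \oplus \langle 4 \rangle}$ everywhere, and by the reduction of the first paragraph the algebraic monodromy on $K_1$ is trivial; that is, the pull-back of $\calY_1$ to $C_{M_1}'$ is $K_1$-polarized, and Proposition \ref{prop:untwistingkummer} then lets us undo the Kummer construction on it.

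The main obstacle is the discriminant-group computation in the second step: one must verify that the images $h_1,h_2$ of the generators of $\mathrm{O}(M_1^\perp)^*$ from the proof of Proposition \ref{MnG} have order two (so that the ramification over the cusp and over the order-two point contributes nothing on $A_{H(2) \oplus \langle 4 \rangle}$), while the order-three point contributes exactly $-\Id$. This is a direct but slightly delicate calculation with the explicit matrices, and it is precisely the point at which $n = 1$ differs from $n \geq 2$: here $-\Id \in \mathrm{O}(M_1^\perp)^*$ lies in the kernel of $\mathrm{O}(M_1^\perp)^* \to \mathbb{P}\mathrm{O}(M_1^\perp)$ yet acts nontrivially on $A_{H(2) \oplus \langle 4 \rangle}$, which is exactly why a single further double cover is both necessary and sufficient.
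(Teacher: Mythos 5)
Your argument is correct and is essentially the paper's own: both reduce to killing the transcendental monodromy action on $A_{H(2)\oplus\langle 4\rangle}$ via the algebraic--transcendental dictionary, read off the local monodromies from the ramification of the degree-$6$ map $j(t)$, observe that only the two preimages of the order-three orbifold point contribute (namely $-\Id$ on the discriminant group, coming from the cube of the order-six monodromy there), and take the double cover of $\Proj^1_t$ branched at the roots of $t^2-t+1$. The one point worth flagging is your claim that $h_2$ has order two, which contradicts the order-$3$ assertion in the paper's proof of Proposition \ref{MnG} but agrees with the paper's own use of $h_2^2=\Id$ in this argument (and with a direct check on $(\Z/2)^2\oplus\Z/4$); this is a load-bearing step, since if the cusp monodromy acted nontrivially on the discriminant group the stated double cover would not suffice, so the explicit matrix verification you defer is genuinely required.
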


The maps $f\colon C_{M_n} \to \calM_{M_n}$ will be calculated in the next section.

\subsection{Covers for small \texorpdfstring{$n$}{n}} \label{section:covers}

In this section, we will explicitly compute the maps $f \colon C_{M_n} \to \calM_{M_n}$ for $n \leq 4$. To do this, we decompose the map $f = f_1 \circ f_2 \circ f_3$, where
\begin{align*}
f_1 \colon &  \Gamma_0(n) \setminus \HH \longrightarrow \Gamma_0(n)^+ \setminus \HH, \\
f_2 \colon & \Gamma_0(2n) \setminus \HH \longrightarrow \Gamma_0(n) \setminus \HH,  \\
f_3 \colon &  C_{M_n} \cong (\Gamma_0(2n) \cap \Gamma(2)) \setminus \HH \longrightarrow \Gamma_0(2n) \setminus \HH.
\end{align*}

\subsubsection{The case \texorpdfstring{$n=1$}{n=1}} The rational modular curves $\Gamma_0(1)^+ \setminus \HH$ and $\Gamma_0(1)\setminus \HH$ are isomorphic and have two elliptic points of orders $2$ and $3$ along with a single cusp. The map $f_2$ is a triple cover ramified with index $3$ over the elliptic point of order $2$ and indices $(2,1)$ over the elliptic point of order $2$ and the cusp. $\Gamma_0(2) \setminus \HH$ is a rational modular curve with an elliptic point of order $2$ and two cusps. Finally, $f_3$ is a double cover ramified over the elliptic point and the cusp that is not a ramification point of $f_2$ and $C_{M_1}$ is a rational modular curve with three cusps.

We thus see that $f\colon C_{M_1} \to \Gamma_0(1)^+ \setminus \HH$ is a $6$-fold cover ramified with indices $2$ and $3$ at all points over the elliptic points of order $2$ and $3$ respectively and index $2$ at all points over the cusp. It is easy to see that the deck transformation group of $f$ is $S_3$.

However, from Proposition \ref{prop:M1}, we need to take a further double cover of $C_{M_1}$ before  we can undo the Kummer construction. This double cover is ramified over the two preimages under $f$ of the elliptic point of order $3$. The composition $C_{M_1}' \to \Gamma_0(1)^+ \setminus \HH$  is a $12$-fold cover ramified with indices $2$ and $6$ at all points over the elliptic points of order $2$ and $3$ respectively and index $2$ at all points over the cusp. It is easy to see that the deck transformation group of this composition is $S_3 \times C_2$, as expected from Proposition \ref{MnG}.

\subsubsection{The case \texorpdfstring{$n=2$}{n=2}} The rational modular curve $\Gamma_0(2)^+ \setminus \HH$ has two elliptic points of orders $2$ and $4$ and a single cusp. The map $f_1$ is a double cover ramified over the two elliptic points and $\Gamma_0(2) \setminus \HH$  is a rational modular curve with a single elliptic point of order $2$ and two cusps. The map $f_2$ is then a double cover ramified over the elliptic point and one of the cusps, and $\Gamma_0(4) \setminus \HH$ is a rational modular curve with three cusps. Finally, $f_3$ is a double cover ramified over the two cusps that are not ramification points of $f_2$ and $C_{M_2}$ is a rational modular curve with four cusps.

We thus see that $f\colon C_{M_2} \to \Gamma_0(2)^+ \setminus \HH$ is an $8$-fold cover ramified with indices $2$ and $4$ at all points over the elliptic points of order $2$ and $4$ respectively and index $2$ at all points over the cusp. It is easy to see that the deck transformation group of $f$ is $D_8$, as expected from Proposition \ref{MnG}.

\subsubsection{The case \texorpdfstring{$n=3$}{n=3}} The rational modular curve $\Gamma_0(3)^+ \setminus \HH$ has two elliptic points of orders $2$ and $6$ and a single cusp. The map $f_1$ is a double cover ramified over the two elliptic points and $\Gamma_0(3) \setminus \HH$  is a rational modular curve with one elliptic point of order $3$ and two cusps. The map $f_2$ is then a triple cover ramified with index $3$ over the elliptic point and indices $(2,1)$ over each of the cusps, and $\Gamma_0(6) \setminus \HH$ is a rational modular curve with four cusps. Finally, $f_3$ is a double cover ramified over the two cusps that are not ramification points of $f_2$ and $C_{M_3}$ is a rational modular curve with six cusps.

We thus see that the map $f\colon C_{M_3} \to \Gamma_0(3)^+ \setminus \HH$ is an $12$-fold cover ramified with indices $2$ and $6$ at all points lying over the elliptic points of orders $2$ and $6$ respectively and index $2$ at all points over the cusp. It is easy to see that the deck transformation group of $f$ is $D_{12}$, as expected from Proposition \ref{MnG}.

\subsubsection{The case \texorpdfstring{$n=4$}{n=4}} The rational modular curve $\Gamma_0(4)^+ \setminus \HH$ has an elliptic point of order $2$ and two cusps. The two cusps are distinguished by their widths, which are $1$ and $2$. The map $f_1$ is a double cover ramified over the elliptic point and the cusp of width $2$. The rational modular curve $\Gamma_0(4) \setminus \HH$  has three cusps of widths $(4,1,1)$. The map $f_2$ is then a double cover ramified with index $2$ over the cusp of width $4$ and one of the cusps of width $1$. The rational modular curve $\Gamma_0(8) \setminus \HH$ has four cusps of widths $(8,2,1,1)$.  Finally, $f_3$ is a double cover ramified over the two cusps of width $1$. The curve $C_{M_4}$ is a rational modular curve with six cusps of widths $(8,8,2,2,2,2)$. 

We thus see that $f\colon C_{M_4} \to \Gamma_0(4)^+ \setminus \HH$ is an $8$-fold cover ramified with index $2$ at all points lying over the elliptic point and indices $2$ and $4$ at all points over the cusps of widths $1$ and $2$ respectively. It is easy to see that the deck transformation group of $f$ is $D_{8}$, as expected from Proposition \ref{MnG}.

\begin{remark} Note that if $n \neq 1$ we may also find a cover of $\calY_n \to U_{M_n}$ that is $K_n$-polarized using the method of Section \ref{undokummer} (if $n = 1$ then this method cannot be used, as Assumption \ref{I2ass} fails; see Section \ref{sect:M1}). In the three cases with $n \geq 2$ above it may be seen that this cover agrees with $C_{M_n}$. \end{remark}

\subsection{Application to the 14 cases.}\label{sect:14caseapp}

We now apply this theory to undo the Kummer construction for families of Kummer surfaces arising from $M$-polarized fibrations on the fourteen cases in \cite[Table 1]{doranmorgan}. 

Examining these cases, we find $M_n$-polarized K3 fibrations with $2 \leq n \leq 4$ on nine of them, listed in the appropriate sections of Table \ref{table:12cases}. In this table, the first column gives the polarization lattice $M$ or $M_n$, the second gives the mirrors of the threefolds that have $M$- or $M_n$-polarized K3 fibrations, and the third states whether or not these fibrations are torically induced (the meanings of the fourth and fifth columns will be discussed later). More precisely, we have:

\begin{theorem} \label{thm:Mnfamilies} There exist K3 fibrations with $M_n$-polarized generic fibre, for $2 \leq n \leq 4$, on nine of the threefolds in \cite[Table 1]{doranmorgan}, given by the mirrors of those listed in the appropriate sections of Table \ref{table:12cases}. Furthermore, if $\calX \to \Proj^1$ denotes one of these fibrations and $U \subset \Proj^1$ is the open set over which the fibres of $\calX$ are nonsingular, then the restriction $\calX|_U\to U$ agrees with the pull-back of a family $\calX_n$ \textup{(}see Definition \ref{Xndefn}\textup{)} by the generalized functional invariant map $U \to \calM_{M_n}$. The family $\calX|_U\to U$ is thus an $M_n$-polarized family of K3 surfaces.  \end{theorem}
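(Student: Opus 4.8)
The proof divides naturally into an explicit geometric construction and a moduli-theoretic identification, and I would treat these separately. \emph{Construction.} First I would produce the fibrations case by case. Each of the nine threefolds is realised (via its mirror) as a hypersurface in a toric fourfold from \cite[Table 1]{doranmorgan}, and for each I would exhibit a fibration by K3 surfaces --- in the torically induced cases this comes from a fibration of the ambient toric variety by toric threefolds, so that the generic fibre is an anticanonical K3 hypersurface whose N\'eron--Severi lattice can be read off from the toric data. For each fibration I would verify that the generic N\'eron--Severi lattice is isometric to $M_n$ with the claimed $n$. The efficient route is the alternate elliptic fibration of Section \ref{M-polarized}: one locates a section, the $I_6^*$ fibre (equivalently the $I_{12}^*$ fibre of the standard fibration) realising $H \oplus E_8 \oplus E_8$, and the extra $(-2n)$-class, then checks that these generate a rank-$19$ lattice with the discriminant form of $M_n$. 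This determines $n$ in each case, populates Table \ref{table:12cases}, and records whether the fibration is torically induced.

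\emph{The generalized functional invariant.} Having fixed such a fibration $\calX \to \Proj^1$ with smooth locus $U \subset \Proj^1$, the $M$-polarization is manifest: the classes generating $M = H \oplus E_8 \oplus E_8$ all arise from the globally defined alternate elliptic fibration and its reducible fibre, hence are invariant under monodromy, so $\calX|_U$ is an $M$-polarized family in the sense of Definition \ref{def:polarized}. The generalized functional invariant is then the associated period morphism $g \colon U \to \calM_{M_n}$; its image lands in the one-dimensional $M_n$-locus precisely because the generic fibre has N\'eron--Severi lattice equal to $M_n$, and concretely $g$ sends $t$ to the isomorphism class of the $M_n$-polarized K3 surface $X_t$, i.e. to its $(a(t),b(t))$-parameters on the modular curve $\calM_{M_n}$.

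\emph{Identification with $g^*\calX_n$.} The family $\calX_n$ is, by the remark following Definition \ref{Xndefn}, the restriction to the $M_n$-locus of the Clingher--Doran--Lewis--Whitcher normal-form family of \cite[Theorem 3.1]{nfk3smmp}, which realises every $M$-polarized K3 surface as a fibre over its period point. I would therefore argue that, fibrewise, $X_t$ is isomorphic to the fibre of $\calX_n$ over $g(t)$ by the Global Torelli Theorem, and that these fibrewise isomorphisms are \emph{unique} as marked ($M$-fixing) isomorphisms; this lets them glue to an isomorphism of families $\calX|_U \cong g^*\calX_n$ over $U$ (after removing the finitely many points of $U$ mapping to the orbifold points of $\calM_{M_n}$, over which $\calX_n$ is then extended by properness). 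Finally, since $\calX_n$ is $M_n$-polarized by Definition \ref{Xndefn} and the pull-back of a trivial local system is trivial, the local subsystem $\mathcal{N} \cong M_n$ of $\mathcal{NS}(\calX|_U)$ is trivial, so $\calX|_U$ is an $M_n$-polarized family, as claimed.

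\emph{Main obstacle.} The crux is the uniqueness needed to glue the fibrewise Torelli isomorphisms, which amounts to matching the full integral monodromy of $\calX|_U$ with that of $g^*\calX_n$ rather than merely their holomorphic periods. Two features make this tractable: the defining property of $\calX_n$ that it carries the full transcendental monodromy $\mathrm{O}(M_n^\perp)^*$ over $\calM_{M_n} = \mathrm{O}(M_n^\perp)^* \setminus \mathcal{P}_{M_n}$, so that the monodromy of any family with period map $g$ is accounted for; and the observation that a nontrivial symplectic automorphism $\sigma$ of the generic fibre acting trivially on $M$ would have its lattice $S_\sigma$ embedded in the orthogonal complement of $M$ in $\NS(X_t) = M_n$, namely the rank-one lattice $\langle -2n \rangle$, contradicting $\rank S_\sigma \geq 8$. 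The latter rules out any ambiguity in the marked isomorphism --- the canonical Nikulin involution acts nontrivially on $M$ (its lattice $S_\beta$ already sits inside $M$) and so does not interfere --- and it is precisely this absence of hidden symplectic automorphisms fixing $M$ that upgrades fibrewise Torelli to an isomorphism of families.
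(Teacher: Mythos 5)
Your overall decomposition (construct the fibrations, pin down the lattice, then identify $\calX|_U$ with the pull-back of $\calX_n$) matches the paper's, but the mechanism you propose for the identification step is genuinely different, and it is there that the problem lies. The paper proves the identification by explicit computation: each $\calX_n$ is written as a pencil of Laurent hypersurfaces in $(\C^\times)^3$ (e.g.\ $x_1+x_2+x_3+\lambda/x_1x_2x_3=1$ for $n=2$), the Batyrev--Borisov mirror threefold is written as a Laurent hypersurface in $(\C^\times)^4$, and a change of variables exhibits the fibre over $t$ as literally the fibre of $\calX_n$ over $\lambda(t)$. This simultaneously produces the generalized functional invariant in closed form, $\lambda = Au^{i+j}/t^i(u-t)^j$, which is what populates the $(i,j)$ column of Table \ref{table:12cases} and feeds Theorem \ref{thm:thinarith}; the real labour in Section \ref{sect:14caseapp} is then verifying that $\calX_n$ satisfies Definition \ref{Xndefn}, via monodromy computations at the special points and rigidity of finite-index $(2,d,\infty)$ triangle subgroups. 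Your route replaces all of this with an abstract period map and a fibrewise Torelli gluing.

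The genuine gap is in your uniqueness argument, which you correctly identify as the crux. Two polarized isomorphisms between $X_t$ and the fibre of $\calX_n$ over $g(t)$ differ by an automorphism of $X_t$ fixing $\NS(X_t)\cong M_n$ pointwise, and such an automorphism need not be symplectic: your $\rank S_\sigma \geq 8$ argument excludes only the symplectic ones. A non-symplectic involution fixing $M_n$ pointwise and acting as $-\Id$ on $\T(X_t)$ is not a hypothetical worry --- it is exactly what occurs for $n=1$ (Remark \ref{rem:inv}, Proposition \ref{prop:M1inv}) and is the stated reason the $M_1$-fibrations are \emph{not} pull-backs from $\calM_{M_1}$, forcing the separate treatment of Section \ref{sect:M1}. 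For $2\le n\le 4$ the loophole does close, because $-\Id\notin\mathrm{O}(M_n^\perp)^*$ (triviality of the kernel of $\mathrm{O}(M_n^\perp)^*\to\mathbb{P}\mathrm{O}(M_n^\perp)$, \cite[Lemma 1.15]{adck3smt}, the same fact the paper invokes to show $f^*\calY_n$ is $K_n$-polarized), but this must be said: without it, ``absence of hidden symplectic automorphisms'' does not upgrade fibrewise Torelli to an isomorphism of families. Two secondary soft spots: you invoke a period map to $\calM_{M_n}$ before establishing that monodromy acts trivially on the $\langle -2n\rangle$ summand (invariance of the $M$-classes alone gives only a map to the $M$-polarized period space, whose $M_n$-locus is a priori just a quotient image of $\calM_{M_n}$); and even once patched, your argument yields no explicit generalized functional invariant, whereas the paper's computation of the $(i,j)$ data is needed downstream.
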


\begin{table}
\begin{tabular}{|c|c|c|c|c|}
\hline 
Lattice & Mirror threefold & Toric? & $(i,j)$ & Arithmetic/thin \\
\hline
 & $\WP(1,1,1,1,2)[6]$ & Yes & $(1,2)$ & Arithmetic \\
& $\WP(1,1,1,1,4)[8]$ & Yes & $(1,3)$ &  Thin \\ 
$M_1$ &$\WP(1,1,1,2,5)[10]$ &Yes & $(2,3)$ &  Arithmetic \\
& $\WP(1,1,1,1,1,3)[2,6]^*$ & Yes & $(1,1)$ &  Thin \\
& $\WP(1,1,1,2,2,3)[4,6]^*$ & Yes & $(2,2)$ &  Arithmetic \\
\hline
& $\Proj^4[5]$ & Yes & $(1,4)$ & Thin \\
& $\WP(1,1,1,1,2)[6]$ & Yes & $(2,4)$ &  Arithmetic \\
$M_2$ & $\WP(1,1,1,1,4)[8]$ & Yes & $(4,4)$ &  Thin \\ 
&$\Proj^5[2,4]$ & Yes & $(1,1)$ & Thin \\
& $\WP(1,1,1,1,2,2)[4,4]$ & Yes & $(2,2)$ &  Arithmetic \\
\hline
& $\Proj^4[5]$ & No & $(2,3)$ &  Thin \\
& $\Proj^5[2,4]$ & No & $(1,3)$ & Thin \\
$M_3$ & $\Proj^5[3,3]$ & Yes & $(1,2)$ &  Arithmetic \\
 & $\WP(1,1,1,1,1,2)[3,4]^*$ & Yes & $(2,2)$ &  Arithmetic \\ 
& $\Proj^6[2,2,3]$ & Yes & $(1,1)$ &  Thin \\
\hline
& $\Proj^5[2,4]$ & No & $(2,2)$ & Thin \\
$M_4$ & $\Proj^6[2,2,3]$ & No & $(1,2)$ & Thin \\
& $\Proj^7[2,2,2,2]$ & Yes & $(1,1)$ &  Thin \\
\hline
$M$ & $\WP(1,1,1,1,4,6)[2,12]$ & Yes & $(1,1)$ & Thin \\
\hline
\end{tabular}
\caption{Lattice polarized K3 fibrations on the threefolds from \cite[Table 1]{doranmorgan}.}
\label{table:12cases}
\end{table}

\begin{remark} The $M_1$-polarized cases in the first section of Table \ref{table:12cases} will require some extra work, so they will be discussed separately in Section \ref{sect:M1}. The 14th case of \cite{14thcase} has already been discussed in Example \ref{ex:14thcase}, where we recalled that the family of threefolds $Y_1$ realising the 14th case variation of Hodge structure admit torically induced $M$-polarized K3 fibrations. By \cite[Section 8.2]{14thcase}, these threefolds $Y_1$ can be thought of as mirror to complete intersections $\WP(1,1,1,1,4,6)[2,12]$. This case is included in the final row of Table \ref{table:12cases}. 
\end{remark}

\begin{remark} To check which of the fibrations listed in Table \ref{table:12cases} are torically induced, one may use the computer software \emph{Sage} to find all fibrations of the toric ambient spaces by toric subvarieties that induce fibrations of the Calabi-Yau threefold by $M$-polarized K3 surfaces. The resulting list may be compared to the list of fibrations in Table \ref{table:12cases}, giving the third column of this table. This also proves that Table \ref{table:12cases} contains \emph{all} torically induced fibrations of the Calabi-Yau threefolds from \cite[Table 1]{doranmorgan} by $M$-polarized K3 surfaces.
\end{remark}

We will prove Theorem \ref{thm:Mnfamilies} by explicit calculation: we find families $\calX_n$ satisfying Definition  \ref{Xndefn} and show that they pull back to give the families $\calX|_U$ under the generalized functional invariant maps. 

In each case, we will see that the generalized functional invariant map is completely determined by the pair of integers $(i,j)$ from the fourth column of Table \ref{table:12cases}. In fact, we find that it is an $(i+j)$-fold cover of $\calM_{M_n} \cong \Gamma_0(n)^+ \setminus \HH$ having exactly four ramification points: one of order $(i+j)$ over the cusp (or, in the $M_4$-polarized case, the cusp of width $1$), two of orders $i$ and $j$ over the elliptic point of order $\neq 2$ (or, in the $M_4$-polarized case, the cusp of width $2$), and one of order $2$ which varies with the value of the Calabi-Yau deformation parameter.

We thus have everything we need to undo the Kummer construction in the families arising as the resolved quotients of the families $\calX|_U$ from Theorem \ref{thm:Mnfamilies}. By the discussion in Section \ref{section:specialfamilies}, in order to undo the Kummer construction we just need to pull
 back to the cover $C_{M_n} \times_{\calM_{M_n}} U$, where the map $C_{M_n} \to \calM_{M_n}$ is as calculated in Section \ref{section:covers} and $U \to \calM_{M_n}$ is the generalized functional invariant map, described above.

\subsubsection{\texorpdfstring{$M_2$}{M2}-polarized families} We begin the proof of Theorem \ref{thm:Mnfamilies} with the $M_2$ case. Note first that an $M_2$-polarized K3 surface is mirror (in the sense of \cite{mslpk3s}) to a $\langle 4 \rangle$-polarized K3 surface, which is generically a hypersurface of degree $4$ in $\Proj^3$.

By the Batyrev-Borisov mirror construction \cite{cycitv}, the mirror of a degree $4$ hypersurface in $\mathbb{P}^3$ is a hypersurface in the toric variety polar dual to $\Proj^3$. The intersection of this hypersurface with the maximal torus is isomorphic to the locus in $(\mathbb{C}^\times)^3$ defined by the rational polynomial
\begin{equation} \label{eq:M2} x_1 + x_2 + x_3 + \frac{\lambda}{x_1x_2x_3} = 1,\end{equation}
where $\lambda \in \C$ is a constant. This is easily compactified to a singular hypersurface of degree $4$ in $\mathbb{P}^3$, given by the equation
\[\lambda w^4 +  xyz(x+y+z-w) = 0,\]
where $(w,x,y,z)$ are coordinates on $\Proj^3$. 

Consider the family of surfaces over $\C$ obtained by varying $\lambda$. By resolving the singularities of the generic fibre and removing any singular fibres that remain, we obtain a family of K3 surfaces $\calX_2 \to U_{2} \subset \C$. Dolgachev \cite[Example (8.2)]{mslpk3s} exhibited elliptic fibrations on the K3 fibres of $\calX_2$ and used them to give a set of divisors generating the lattice $M_2$. It can be seen from the structure of these elliptic fibrations that these divisors are invariant under monodromy, so there can be no action of monodromy on $M_2$. We thus see that $\calX_2$ is an $M_2$-polarized family of K3 surfaces.

The action of transcendental monodromy on $\calX_2$ was calculated by Narumiya and Shiga \cite{mmfk3sis3drp} (note that our parameter $\lambda$ is different from theirs: our $\lambda$ is equal to $\mu^4$ or $\frac{u}{256}$ from their paper). In \cite[Section 4]{mmfk3sis3drp} they find that the fibre $X_{\lambda}$ of $\calX_2$ is smooth away from $\lambda \in \{0,\frac{1}{256}\}$ and the monodromy action has order $2$ around $\lambda = \frac{1}{256}$, order $4$ around $\lambda = \infty$, and infinite order around $\lambda = 0$. Furthermore, they show \cite[Remark 6.1]{mmfk3sis3drp} that the monodromy of $\calX_2$ generates the $(2,4,\infty)$ triangle group (which is isomorphic to $\Gamma^0(2)^+ \cong \mathrm{O}(M_2^\perp)^*$), so the period map of $\mathcal{X}_2 \to U_{2}$ must be injective. Thus the family $\calX_2 \to U_{2}$ satisfies Definition \ref{Xndefn}.

We can use the local form \eqref{eq:M2} of the family $\calX_2$ to find $M_2$-polarized families of K3 surfaces on the threefolds from \cite[Table 1]{doranmorgan}. For example:

\begin{ex} The first $M_2$-polarized case from Table \ref{table:12cases} is the mirror to the quintic threefold. By the Batyrev-Borisov construction, on the maximal torus we may write this mirror as the  locus in $(\mathbb{C}^\times)^4$ defined by the rational polynomial
\[ x_1 + x_2 + x_3 + x_4 + \frac{A}{x_1x_2x_3x_4} = 1,\]
where $A \in \C$ is the Calabi-Yau deformation parameter. Consider the fibration induced by projection onto the $x_4$ coordinate; for clarity, we make the substitution $x_4 = t$. If we further substitute $x_i \mapsto x_i(1-t)$ for $1 \leq i \leq 3$ and rearrange, we obtain
\[ x_1 + x_2 + x_3 + \frac{A}{x_1x_2x_3t(1-t)^4} = 1.\]
But, from the local form \eqref{eq:M2}, it is clear that this describes an $M_2$-polarized family of K3 surfaces with 
\[ \lambda = \frac{A}{t(1-t)^4}.\]
This is the generalized functional invariant map of the fibration. Note that it is ramified to orders $1$ and $4$ over the order $4$ elliptic point $\lambda = \infty$, order $5$ over the cusp $\lambda = 0$, and order $2$ over the variable point $\lambda = \frac{5^5A}{2^8}$, giving $(i,j) = (1,4)$.
\end{ex}

Similar calculations may be performed in the other $M_2$-polarized cases from Table \ref{table:12cases}. We find that the generalized functional invariants are given by
\[ \lambda = \frac{Au^{i+j}}{t^i(u-t)^j},\]
where $(t,u)$ are homogeneous coordinates on the base $U \subset \Proj^1$ of the K3 fibration, $(i,j)$ are as in Table \ref{table:12cases}, and $A$ is the Calabi-Yau deformation parameter.

\subsubsection{\texorpdfstring{$M_3$}{M3}-polarized families} Here we follow a similar method to the $M_2$-polarized case. An $M_3$-polarized K3 surface is mirror to a $\langle 6 \rangle$-polarized K3 surface, which may be realised as a complete intersection of type $(2,3)$ in $\Proj^4$.

By the Batyrev-Borisov construction, on the maximal torus we may express the mirror of a $(2,3)$ complete intersection in $\mathbb{P}^4$ as the locus in $(\mathbb{C}^\times)^3$ defined by the rational polynomial
\begin{equation} \label{eq:M3} x_1 + \frac{\lambda}{x_1x_2x_3(1-x_2-x_3)} = 1,\end{equation}
where $\lambda \in \C$ is a constant. This is easily compactified to a singular hypersurface of bidegree $(2,3)$ in $\mathbb{P}^1 \times \mathbb{P}^2$, given by the equation
\[\lambda s^2 z^3 +  r(r-s)xy(z-x-y) = 0,\]
where $(r,s)$ are coordinates on $\Proj^1$ and $(x,y,z)$ are coordinates on $\Proj^2$. 

Consider the family of surfaces over $\C$ obtained by varying $\lambda$. By resolving the singularities of the generic fibre and removing any singular fibres that remain, we obtain a family of K3 surfaces $\calX_3 \to U_{3} \subset \C$. We now show that $\calX_3$ is an $M_3$-polarized family that satisfies Definition \ref{Xndefn}.

There is a natural elliptic fibration on the fibres of $\calX_3$, obtained by projecting onto the $\mathbb{P}^1$ factor. This elliptic fibration has two singular fibers of Kodaira type $IV^*$ at $r=0$ and $r=s$, a fibre of type $I_6$ at $s=0$, two fibres of type $I_1$ and a section. In fact, one sees easily that the hypersurface obtained by intersecting with $z=0$ splits into three lines, which project with degree $1$ onto $\mathbb{P}^1$ and hence are all sections. If we choose one of these sections as a zero section, the other two are $3$-torsion sections and generate a subgroup of the Mordell-Weil group of order $3$.

One can check that the lattice spanned by components of reducible fibers and these torsion sections is a copy of the lattice $M_3$ inside of $\NS(X_{\lambda})$, for each fiber $X_{\lambda}$ of $\mathcal{X}_3 \to U_{3}$. Since the $3$-torsion sections are individually fixed under monodromy, there can be no monodromy action on this copy of $M_3$ in $\NS(X_{\lambda})$. We thus see that $\calX_3$ is an $M_3$-polarized family of K3 surfaces.

Next we calculate the transcendental monodromy of this family to show that it satisfies Definition \ref{Xndefn}.

\begin{lemma}
\label{lemma:M3monodromy}
$U_{3}$ is the open subset given by removing the points $\lambda \in \{0,\frac{1}{108}\}$ from $\C$. Transcendental monodromy of the family $\mathcal{X}_3 \to U_{3}$ has order $2$ around $\lambda = \frac{1}{108}$, order dividing $6$ around $\lambda = \infty$ and infinite order around $\lambda = 0$.
\end{lemma}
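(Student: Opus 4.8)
The plan is to exploit the fact, established in the surrounding discussion, that $\calX_3$ is an $M_3$-polarized family, so its period morphism lands in $\calM_{M_3} \cong \Gamma_0(3)^+ \setminus \HH$; as recorded in Section \ref{section:covers}, this modular curve carries two elliptic points, of orders $2$ and $6$, together with a single cusp. Transcendental monodromy around each special value of $\lambda$ must therefore be conjugate, via the map $R_3$ of Lemma \ref{lemma:modular}, to the stabilizer of the corresponding special point in $\Gamma_0(3)^+$. The three assertions then reduce to matching the values $\lambda = \tfrac{1}{108}$, $\lambda = \infty$ and $\lambda = 0$ with the order-$2$ elliptic point, the order-$6$ elliptic point and the cusp respectively, and reading off the monodromy orders $2$, a divisor of $6$, and $\infty$.

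First I would locate the singular fibres. Passing to the Weierstrass form of the elliptic fibration described above (with fibres $IV^*$, $IV^*$, $I_6$, two $I_1$ and a section) and computing its discriminant as a form on the base $\Proj^1_{(r:s)}$, one finds that the generic configuration persists except at finitely many $\lambda$: the two $I_1$ fibres collide into an $I_2$ -- forcing the K3 to acquire an extra node -- precisely at $\lambda = \tfrac{1}{108}$, while the configuration jumps at $\lambda = 0$ and $\lambda = \infty$. This identifies $U_3 = \C \setminus \{0, \tfrac{1}{108}\}$.

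Next I would compute the local transcendental monodromy at each point via the Picard--Fuchs equation of the holomorphic period. Writing the period of the holomorphic $2$-form as a toric residue integral of the local form \eqref{eq:M3} and expanding in $\lambda$ yields a hypergeometric series with radius of convergence $\tfrac{1}{108}$; the associated rank-$3$ Fuchsian operator (the symmetric square of a second-order hypergeometric operator) has regular singular points exactly at $0$, $\tfrac{1}{108}$ and $\infty$. At $\lambda = 0$ the local exponents coincide, so the point is maximally unipotent: the period develops a single logarithm, the monodromy on $\T(X_\lambda)$ is unipotent and non-trivial, hence of infinite order. At $\lambda = \tfrac{1}{108}$ the collision of the two $I_1$ fibres contributes a single vanishing cycle, so the monodromy is a Picard--Lefschetz reflection, of order $2$. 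At $\lambda = \infty$ the exponents have denominator $6$, so the monodromy is quasi-unipotent with eigenvalues sixth roots of unity, giving order dividing $6$.

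The hard part will be the analysis at $\lambda = \infty$: extracting the exponents there and showing cleanly that the monodromy has order \emph{dividing} $6$ (the sharper statement of exact order $6$ would require injectivity of the period map, established only later), while correctly transporting the fractional exponents of the order-$2$ ``square-root'' equation into integral statements about the rank-$3$ lattice $\T(X_\lambda) \cong M_3^\perp$. Establishing genuinely infinite order at $\lambda = 0$ likewise requires confirming that the unipotent part is non-trivial, i.e.\ that $\lambda = 0$ is a true cusp rather than a smooth interior point of $\calM_{M_3}$; this follows from the logarithmic growth of the period, consistent with $\lambda = 0$ being the large complex structure limit of the family.
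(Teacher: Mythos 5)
Your identification of $U_3$ and your treatment of $\lambda = \tfrac{1}{108}$ coincide with the paper's: a discriminant computation locates the singular fibres, and the collision of the two $I_1$'s produces a single node, hence a $(-2)$ vanishing class and an order-$2$ Picard--Lefschetz reflection. For the other two points, however, you take a genuinely different route. The paper stays geometric: at $\lambda = \infty$ it performs the base change $\lambda = \mu^3$ and observes that the resulting fibre is an elliptic K3 with three $IV^*$ fibres which, for rank reasons (generic N\'eron--Severi rank $19$), can acquire at most a single node, so monodromy in $\mu$ has order at most $2$ and hence order dividing $6$ in $\lambda$; at $\lambda = 0$ it argues indirectly that since $\calM_{M_3}$ has a cusp and the other two special points carry finite-order monodromy, $\lambda = 0$ must map to the cusp and therefore has infinite-order monodromy. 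Your Picard--Fuchs approach replaces both steps with an analysis of local exponents of the (symmetric-square, hypergeometric) rank-$3$ operator. This is a legitimate alternative --- the transcendental local system has rank $3$, so the solution sheaf of the Picard--Fuchs operator faithfully records the transcendental monodromy --- and it has the advantage of identifying $\lambda=0$ as the maximally unipotent point directly, rather than by elimination; the cost is that you must actually exhibit the operator and its Riemann scheme, which the paper avoids.

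One step in your argument is not valid as written: at $\lambda = \infty$ you pass from ``the exponents have denominator $6$'' to ``quasi-unipotent with eigenvalues sixth roots of unity, giving order dividing $6$.'' Quasi-unipotence with root-of-unity eigenvalues does not imply finite order --- the monodromy at $\lambda = 0$ is itself quasi-unipotent with eigenvalue $1$ and has infinite order. You need the local monodromy at $\infty$ to be \emph{semisimple}, which for a Fuchsian operator follows if the three exponents there are pairwise non-congruent modulo $\Z$ (no resonance), or must otherwise be checked by hand for logarithmic solutions. Once you verify non-resonance of the exponents at $\infty$ the conclusion is correct, but as stated the inference has a gap.
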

\begin{proof} The discriminant of the elliptic fibration on a fibre $X_{\lambda}$ of $\calX_3$ vanishes for $\lambda \in \{0, \frac{1}{108},\infty\}$, giving the locations of the singular K3 surfaces that are removed from the family $\calX_3$. At $\lambda = \frac{1}{108}$ the two singular fibres of type $I_1$ collide so that the K3 surface $X_{\lambda = \frac{1}{108}}$ has a single node. Thus there is a vanishing class of square $(-2)$ associated to the fibre $X_{\lambda = \frac{1}{108}}$ and monodromy around this fibre is a reflection across this class. Therefore monodromy around $\lambda = \frac{1}{108}$ has order $2$.

We will use this to indirectly calculate the monodromies around other points. After base change $\lambda = \mu^3$ and a change in variables, one finds that the $\lambda = \infty$ fiber can be replaced with an elliptically fibered K3 surface with three singular fibers of type $IV^*$. Since a generic member of the family $\mathcal{X}_3$ has N\'eron-Severi rank $19$, this fiber can only have a single node, so again the monodromy transformation around it must be of order at most $2$. Hence monodromy around $\lambda =\infty$ has order dividing $6$.

To determine monodromy around the final point, it is enough to note that the moduli space of $M_n$-polarized K3 surfaces has a cusp, and the preimage of this cusp under the period map must also have monodromy of infinite order. Since the points $\lambda \in \{\frac{1}{108},\infty\}$ are of finite order and every other fiber is smooth, $\lambda = 0$ must map to the cusp under the period map and therefore has infinite order monodromy.
\end{proof}

As a result we find:

\begin{proposition}
The period map of $\mathcal{X}_3 \to U_{3}$ is injective and the subgroup of $\mathrm{O}(M_3^\perp)^*$ generated by monodromy transformations is $\mathrm{O}(M_3^\perp)^*$ itself. The family $\calX_3$ thus satisfies Definition \ref{Xndefn}.
\end{proposition}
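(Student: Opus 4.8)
The plan is to prove both assertions simultaneously by showing that the period map $\Phi\colon U_3 \to \calM_{M_3}$ is an isomorphism onto $\calM_{M_3}$, thereby identifying $U_3 \cong U_{M_3}$. Since $\calX_3$ is $M_3$-polarized, its transcendental monodromy automatically lands in $\mathrm{O}(M_3^\perp)^* = \mathrm{O}(M_3^\perp,\Id)$, and by \cite[Theorem 7.1]{mslpk3s} the target $\calM_{M_3} \cong \Gamma_0(3)^+\setminus\HH$ is the $(2,6,\infty)$ triangle orbifold, carrying elliptic points $e_2, e_6$ of orders $2$ and $6$ and a single cusp. As the infinite monodromy of Lemma \ref{lemma:M3monodromy} shows $\calX_3$ is non-isotrivial, $\Phi$ is a nonconstant holomorphic map of curves and extends to a finite map $\overline\Phi\colon \Proj^1 \to \Proj^1$ of the coarse compactifications, of some degree $d \geq 1$. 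The entire point is to force $d = 1$.

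First I would pin down the preimages of the three special points. Every fibre over $U_3$ is a smooth K3 surface, so no point of $U_3$ maps to the cusp; by Lemma \ref{lemma:M3monodromy} the only puncture with parabolic monodromy is $\lambda = 0$, whence $\overline\Phi^{-1}(\mathrm{cusp}) = \{0\}$ and the ramification index there equals $d$. The punctures $\lambda = \tfrac{1}{108}$ and $\lambda = \infty$, carrying finite monodromy of orders $2$ and $m \mid 6$, must map to $e_2$ and $e_6$. The essential local input is a monodromy-to-ramification dictionary: developing $\Phi$ on the universal cover $\HH \to \HH$ and matching the parabolic generator of a puncture against its elliptic image, one finds that a puncture whose monodromy has order $r$ and which maps to an elliptic point of order $s$ contributes coarse ramification index $s/r$, while an interior preimage of that elliptic point contributes ramification a positive multiple of $s$ (exactly $s$ when $\Phi$ is immersive there). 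In particular $\lambda=\tfrac{1}{108}$ is \emph{unramified} over $e_2$, and $\lambda=\infty$ has ramification index $6/m$ over $e_6$.

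Granting immersivity, I would then feed this into Riemann-Hurwitz. Writing $k_2, k_6$ for the numbers of interior preimages of $e_2, e_6$ (ramified to orders $2$ and $6$ respectively), the preimage counts give $d = 1 + 2k_2$ and $d = 6/m + 6k_6$, and the Riemann-Hurwitz relation $2 = 2d - (d-1) - (6/m - 1) - k_2 - 5k_6$ then collapses to $2d = 3 - 6/m$. The only solution with $d$ a positive integer and $m \mid 6$ is $m = 6$, $d = 1$, forcing $k_2 = k_6 = 0$. Hence $\overline\Phi$ is an isomorphism: the period map is injective, the monodromy around $\infty$ has order \emph{exactly} $6$, and $\pi_1(U_3)$ surjects onto $\pi_1^{\mathrm{orb}}(\calM_{M_3})$, so the transcendental monodromy group is all of $\mathrm{O}(M_3^\perp)^*$. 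This is exactly Definition \ref{Xndefn}. As a cross-check, the identical computation in the $(2,4,\infty)$ case returns $m = 4$, $d = 1$, recovering the Narumiya-Shiga input used for $\calX_2$.

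The hard part will be justifying that $\Phi$ is immersive, i.e.\ that the infinitesimal period map (Kodaira-Spencer followed by cup product against $\mathcal{H}^{2,0}_{\calX_3}$) is nowhere vanishing on $U_3$; this is what makes interior preimages of the elliptic points ramify to exactly the orbifold order, rules out stray ramification over ordinary points, and thereby rigidifies Riemann-Hurwitz into the equality that pins $d = 1$. Making the cusp-local form of the developing map precise enough to verify the $s/r$ dictionary is the remaining technical point. An alternative, once the order at $\infty$ is known to be exactly $6$, is to invoke rigidity of triangle groups — an order-$2$ element, an order-$6$ element, and their parabolic product necessarily generate $\Delta(2,6,\infty)$ — but since the degree computation delivers both the order and surjectivity at once, I would favour carrying it through directly.
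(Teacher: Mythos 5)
Your route is genuinely different from the paper's. The paper disposes of the proposition in three lines: by Lemma~\ref{lemma:M3monodromy} the monodromy group is a $(2,d,\infty)$ triangle group of finite index in $\mathrm{O}(M_3^\perp)^* \cong \Gamma_0(3)^+$, which is the $(2,6,\infty)$ triangle group, and the classification of finite-index inclusions of triangle groups (cited from \cite{ccatg}) says the only such inclusion is the identity. Your Riemann--Hurwitz computation on the coarse compactified period map is, in effect, an inlined proof of the relevant case of that citation: the two arguments are the same orbifold Euler characteristic count, yours carried out by hand and the paper's outsourced. What your version buys is self-containedness and the extra output that the order of monodromy at $\lambda=\infty$ is exactly $6$ (the paper's lemma only gives ``dividing $6$'', and the citation absorbs the case analysis over $d=2,3,6$). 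What it costs is the local analysis at the punctures, which is where your write-up has a real gap.

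The gap: you assert that $\lambda=\tfrac{1}{108}$ ``must map to $e_2$'' and $\lambda=\infty$ to $e_6$. This does not follow from the orders of the local monodromies alone. The stabilizer of $e_6$ in $\Gamma_0(3)^+$ is cyclic of order $6$ and therefore contains an element of order $2$; a priori the order-$2$ monodromy at $\tfrac{1}{108}$ could be conjugate into that stabilizer, in which case $\tfrac{1}{108}$ maps to $e_6$ with coarse ramification index $3$, and similarly the puncture at $\infty$ could land over $e_2$ if $m\mid 2$. Your bookkeeping only treats one of the four possible assignments. The good news is that the argument survives: running the same preimage-count-plus-Riemann--Hurwitz inequalities in the other three configurations (both punctures over $e_6$; both over $e_2$; the assignments swapped) produces contradictions in every case, so you should either add that case check or find an independent reason for the assignment. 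Relatedly, your worry about immersivity is a red herring: you only need that an interior preimage of an elliptic point of order $s$ has coarse ramification index a \emph{positive multiple} of $s$ (standard for a map from a manifold to an orbifold that lifts locally to the uniformizing chart, which the period map does since the local monodromy at interior points is trivial), and that a puncture with monodromy of order $r$ over an elliptic point of order $s$ has index a positive multiple of $s/r$. Replacing your equalities by these inequalities, and allowing arbitrary extra ramification $R\ge 0$ over ordinary points, the count still collapses to $k_2=k_6=R=0$, $m=6$, $d=1$; so no infinitesimal Torelli statement is needed. With those two repairs the proof is complete and is a legitimate alternative to the paper's.
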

\begin{proof}
Notice first that, by Lemma \ref{lemma:M3monodromy}, the monodromy group of $\mathcal{X}_3$ is isomorphic to a triangle group of type $(2,d,\infty)$ for $d = 2,3$ or $6$ and contained in $\mathrm{O}(M_3^\perp)^*$. It is well known that $\mathrm{O}(M_3^\perp)^* \cong \Gamma_0(3)^+$ is a $(2,6,\infty)$ triangle group, and since the period map is of finite degree, the monodromy group of $\mathcal{X}_3$ is of finite index in $\Gamma_0(3)^+$. Thus we need to show that the only finite index embedding of a $(2,d,\infty)$ triangle group into the $(2,6,\infty)$ triangle group is the identity map from the $(2,6,\infty)$ triangle group to itself. But this is calculated in \cite{ccatg}.
\end{proof}

As before, we can use the local form \eqref{eq:M3} of the family $\calX_3$ to find $M_3$-polarized families of K3 surfaces on the threefolds from \cite[Table 1]{doranmorgan}. We find that the generalized functional invariants are given by
\[ \lambda = \frac{Au^{i+j}}{t^i(u-t)^j},\]
where $(t,u)$ are homogeneous coordinates on the base $U \subset \Proj^1$ of the K3 fibration, $(i,j)$ are as in Table \ref{table:12cases}, and $A$ is the Calabi-Yau deformation parameter.

\subsubsection{\texorpdfstring{$M_4$}{M4}-polarized families} We conclude the proof of Theorem \ref{thm:Mnfamilies} with the $M_4$-polarized case. An $M_4$-polarized K3 surface is mirror to an $\left<8 \right>$-polarized K3 surface, given generically as a complete intersection of type $(2,2,2)$ in $\mathbb{P}^5$. 

By the Batyrev-Borisov construction, on the maximal torus we may express the mirror of a complete intersection of type $(2,2,2)$ in $\Proj^5$ as the locus in $(\C^{\times})^3$ defined by the rational polynomial
\begin{equation} \label{eq:M4} x_1 + \frac{\lambda}{x_2(1-x_2)x_3(1-x_3)x_1} = 1.\end{equation}
This may be easily compactified to a singular hypersurface of multidegree $(2,2,2)$ in $(\mathbb{P}^1)^3$ given by
\[ \lambda s_1^2s_2^2s_3^2 - r_1(s_1-r_1)r_2(s_2-r_2)r_3(s_3-r_3) = 0,\]
where $(r_i,s_i)$ are coordinates on the $i$th copy of $\Proj^1$.

As above, we consider the family of surfaces over $\C$ obtained by varying $\lambda$. By resolving the singularities of the generic fibre and removing any singular fibres that remain, we obtain a family of K3 surfaces $\calX_4 \to U_{4} \subset \C$. We now show that $\calX_4$ is an $M_4$-polarized family that satisfies Definition \ref{Xndefn}.

Begin by noting that there is an $S_3$ symmetry on $\calX_4$ obtained by permuting copies of $\mathbb{P}^1$. Furthermore, projection of $(\mathbb{P}^1)^3$ onto any one of the three copies of $\mathbb{P}^1$ produces an elliptic fibration on the K3 hypersurfaces. This elliptic fibration has a description very similar to that of the elliptic fibration on $\mathcal{X}_3$. Generically it has two fibres of type $I_1^*$ at $r_i = 0$ and $r_i = s_i$, a fibre of type $I_8$ at $s_i = 0$, and two fibres of type $I_1$.

This elliptic fibration has a $4$-torsion section. Using standard facts relating the N\'eron-Severi group of an elliptic fibration to its singular fiber types and Mordell-Weil group (see \cite[Lecture VII]{btes}), we see that each fiber of $\mathcal{X}_4$ is polarized by a rank $19$ lattice with discriminant $8$. A little lattice theory shows that this must be the lattice $M_4$. The embedding of $M_4$ into the N\'{e}ron-Severi group must be primitive, otherwise we would find full $2$-torsion structure, which is not the case. As in the case of $\calX_3$, this embedding of $M_4$ is monodromy invariant, so $\mathcal{X}_4$ is an $M_4$-polarized family of K3 surfaces.

\begin{proposition}
\label{prop:M4monodromy}
$U_{4}$ is the open subset given by removing the points $\lambda = \{0, \frac{1}{64}\}$ from $\C$. Transcendental monodromy of the family $\calX_4 \to U_{4}$ has order $2$ around $\lambda = \frac{1}{64}$ and infinite order around $\lambda \in \{0,\infty\}$.

Furthermore, the period map of $\calX_4 \to U_{4}$ is injective and the subgroup of $\mathrm{O}(M_4^\perp)^*$ generated by monodromy transformations is $\mathrm{O}(M_4^\perp)^*$ itself. The family $\calX_4$ thus satisfies Definition \ref{Xndefn}.
\end{proposition}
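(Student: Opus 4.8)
The plan is to follow the template established for the $M_3$-polarized case in Lemma \ref{lemma:M3monodromy} and the proposition immediately following it, adapting the argument to the signature of $\mathrm{O}(M_4^\perp)^* \cong \Gamma_0(4)^+$. Recall from Section \ref{section:covers} that $\Gamma_0(4)^+\setminus\HH$ carries one elliptic point of order $2$ and two cusps, so it is a triangle group of type $(2,\infty,\infty)$; the key difference from the $M_3$ case is that there is no elliptic point of order $>2$ to account for one of the special values of $\lambda$, and instead both $\lambda = 0$ and $\lambda = \infty$ will have to be cusps.

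First I would determine $U_{4}$ and the local monodromy orders from the elliptic fibration described above. Computing the discriminant of this fibration on a fibre $X_\lambda$ shows that, away from the generic configuration of two $I_1^*$, one $I_8$ and two $I_1$ fibres, degeneration occurs exactly at $\lambda \in \{0, \tfrac{1}{64}, \infty\}$; thus $U_{4} = \C \setminus \{0,\tfrac1{64}\}$. Exactly as in the $M_3$ case, at $\lambda = \tfrac1{64}$ the two $I_1$ fibres collide and their vanishing cycles coincide, so that $X_{1/64}$ acquires a single node. The associated vanishing class has square $(-2)$, and monodromy around $\lambda = \tfrac1{64}$ is the corresponding Picard--Lefschetz reflection, hence of order $2$.

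It remains to treat $\lambda = 0$ and $\lambda = \infty$, and this is the step that differs most from the $M_3$ argument. The period map $U_{4} \to \calM_{M_4}$ is non-constant (the monodromy found above is already non-trivial) and of finite degree, so it extends to a surjective morphism $\Proj^1 \to \overline{\calM_{M_4}} = \Gamma_0(4)^+\setminus\HH^*$ of the compactifications. Consequently both cusps of $\Gamma_0(4)^+\setminus\HH$ are hit, and their preimages are punctures of $U_{4}$ with infinite (parabolic) monodromy. Since $\lambda = \tfrac1{64}$ has been shown to carry finite monodromy and every other finite $\lambda \neq 0$ gives a smooth fibre, the two cusps must lie at $\lambda = 0$ and $\lambda = \infty$; hence the monodromy around each of these points has infinite order. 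This establishes the first assertion.

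For the final statement, the local data just obtained exhibit the monodromy group $\Gamma$ as a group generated by an order-$2$ element and two parabolic elements (satisfying the product relation coming from $\pi_1$ of the thrice-punctured sphere), i.e. as a triangle group of type $(2,\infty,\infty)$ contained in $\mathrm{O}(M_4^\perp)^* \cong \Gamma_0(4)^+$. Because the period map has finite degree, $\Gamma$ has finite index in $\Gamma_0(4)^+$, which is itself a $(2,\infty,\infty)$ triangle group. Comparing orbifold Euler characteristics (or invoking the classification of triangle-group embeddings in \cite{ccatg}, exactly as for $M_3$) shows that the only finite-index embedding of a $(2,\infty,\infty)$ triangle group into $\Gamma_0(4)^+$ is the identity; thus $\Gamma = \mathrm{O}(M_4^\perp)^*$ and the period map has degree $1$, i.e. is injective, so $\calX_4$ satisfies Definition \ref{Xndefn}. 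The main obstacle is the infinite-order claim at $\lambda = 0$ and $\lambda = \infty$: unlike the $M_3$ case one cannot reduce either point to finite order by a base change, so one must argue globally that the two cusps of $\calM_{M_4}$ are forced to sit over these two punctures.
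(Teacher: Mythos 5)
Your proof is correct and follows essentially the same route as the paper's: the discriminant computation identifying $U_{4}$, the colliding $I_1$ fibres giving order-$2$ monodromy at $\lambda = \frac{1}{64}$, and the classification from \cite{ccatg} of finite-index $(2,k,l)$ triangle subgroups of the $(2,\infty,\infty)$ group $\Gamma_0(4)^+$ to pin down the monodromy group and the injectivity of the period map. The only divergence is that you establish the infinite-order monodromy at $\lambda \in \{0,\infty\}$ first, by counting cusps in the Baily--Borel compactification (the natural adaptation of the paper's own $M_3$ argument in Lemma \ref{lemma:M3monodromy}), whereas the paper's proof of this proposition extracts that conclusion as a byproduct of the triangle-group classification; both are valid.
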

\begin{proof} As in the proof of Lemma \ref{lemma:M3monodromy}, to see that fibers of $\mathcal{X}_4$ degenerate only when $\lambda \in \{0,\frac{1}{64},\infty\}$, it is enough to do a simple discriminant computation. The elliptic fibration described above is well-defined away from $\lambda \in \{0,\infty\}$ and the two $I_1$ singular fibers collide when $\lambda = \frac{1}{64}$. As before, this shows that monodromy has order $2$ around $\lambda = \frac{1}{64}$.

To see that monodromies around $\lambda \in \{0,\infty\}$ have infinite order, we argue as follows. We have a period map from  $\mathbb{P}^1_\lambda$ to $\overline{\mathcal{M}_{M_4}}$, the Baily-Borel compactification of the period space of $M_4$-polarized K3 surfaces. The monodromy of $\mathcal{X}_4$ is a $(2,k,l)$ triangle group for some choice of $k,l$, and lies inside of $\mathrm{O}(M_4^\perp)^* \cong \Gamma_0(4)^+$ (which is a $(2,\infty,\infty)$ triangle group) as a finite index subgroup, since the period map is dominant. However, by \cite{ccatg}, the only $(2,k,l)$ triangle group of finite index inside of the $(2,\infty,\infty)$ triangle group is the $(2,\infty,\infty)$ triangle group itself (equipped with the identity embedding). Therefore the period map is the identity and monodromy around $\lambda \in \{0,\infty\}$ is of infinite order.
\end{proof}

As in the previous cases, we can use the local form \eqref{eq:M4} of the family $\calX_4$ to find $M_4$-polarized families of K3 surfaces on the threefolds from \cite[Table 1]{doranmorgan}. We find that the generalized functional invariants are given by
\[ \lambda = \frac{Au^{i+j}}{t^i(u-t)^j},\]
where $(t,u)$ are homogeneous coordinates on the base $U \subset \Proj^1$ of the K3 fibration, $(i,j)$ are as in Table \ref{table:12cases}, and $A$ is the Calabi-Yau deformation parameter. This completes the proof of Theorem \ref{thm:Mnfamilies}.

\subsection{The case \texorpdfstring{$n=1$}{n=1}}\label{sect:M1} It remains to address the case of threefolds from \cite[Table 1]{doranmorgan} that are fibred by $M_1$-polarized K3 surfaces. Unfortunately many of the results that we have proved so far do not apply in this case: Assumption \ref{I2ass} does not hold (this follows easily from Remark \ref{rem:assumption} and the expressions for the $(a,b,d)$-parameters of $M_1$-polarized K3 surfaces, below), so the methods of Section \ref{undokummer} do not apply, and the torically induced fibrations of these threefolds by $M_1$-polarized K3 surfaces (computed with \emph{Sage}) cannot all be seen as pull-backs of special $M_1$-polarized families $\calX_1$ from the moduli space $\calM_{M_1}$, so we cannot directly use the results of Section \ref{section:specialfamilies} either.

Instead, we will construct a special $2$-parameter $M_1$-polarized family of K3 surfaces $\calX_{1}^2 \to U_{1}^2$, which is very closely related to a family $\calX_1$ satisfying Definition \ref{Xndefn} (this relationship will be made precise in Proposition \ref{prop:M1inv} and Remark \ref{rem:inv}), and show that the $M_1$-polarized fibrations $\calX \to U$ on our threefolds are pull-backs of this family by maps $U \to U_{1}^2$.

Now let $\calY_{1}^2 \to U_{1}^2$ denote the family of Kummer surfaces associated to $\calX_{1}^2 \to U_{1}^2$ and suppose that we can construct a cover $V \to U_{1}^2$ that undoes the Kummer construction for $\calY_{1}^2$. Then, as before, we may undo the Kummer construction for the family of Kummer surfaces associated to $\calX \to U$ by pulling back to the fibre product $U \times_{U_{1}^2} V$.

To construct the $2$-parameter family $\calX_{1}^2 \to U_{1}^2$, we begin by noting that an $M_1$-polarized K3 surface is mirror to a $\langle 2 \rangle$-polarized K3 surface, which can generically be expressed as a hypersurface of degree $6$ in $\WP(1,1,1,3)$. By the Batyrev-Borisov construction, an $M_1$-polarized K3 surface can be realised torically as an anticanonical hypersurface in the polar dual of $\WP(1,1,1,3)$. The defining polynomial of a generic such anticanonical hypersurface is
\begin{equation}
\label{eq:M1full} a_0 x_0^6 + a_1 x_1^6 + a_2 x_2^6 + a_3 x_3^2 + a_4 x_0 x_1 x_2 x_3 + a_5 x_0^2 x_1^2 x_2^2, 
\end{equation}
where $x_0$, $x_1$, $x_2$ are variables of weight $1$ and $x_3$ is a variable of weight $3$.

On the maximal torus, the family defined by this equation is isomorphic to the vanishing locus in $(\C^{\times})^3$ of the rational polynomial
\begin{equation}\label{eq:M1} y + z + \frac{\alpha}{x^3yz} + x + 1 + \frac{\beta}{x} = 0,\end{equation}
where $\alpha = \frac{a_0 a_1 a_2 a_3^3}{a_4^6}$ and $\beta=\frac{a_3 a_5}{a_4^2}$. Consider the family of K3 surfaces over $\C^2$ obtained by varying $\alpha$ and $\beta$. By resolving the singularities of the generic fibre and removing any singular fibres that remain, we obtain the $2$-parameter family of K3 surfaces $\calX_{1}^2 \to U_{1}^2 \subset \C^2$. 

We can express the $(a, b, d)$-parameters (see Section \ref{M-polarized})  of a fibre of $\calX_{1}^2$ in terms of $\alpha$ and $\beta$ as
\begin{gather*}
a = 1
,\qquad
b = \frac{2^63^3 \alpha}{(4\beta - 1)^3} +1
,\qquad
d = \left(\frac{2^63^3 \alpha}{(4 \beta - 1)^3}\right)^2,
\end{gather*}
where this parameter matching was computed using the elliptic fibrations on $M$-polarized K3 surfaces in Weierstrass normal form. 

Introducing a new parameter
\[ \gamma := \frac{2^63^3 \alpha}{(4 \beta - 1)^3},\]
we see from the expressions for $(a,b,d)$ above that $\gamma$ parametrizes the moduli space $\calM_{M_1}$, so the generalized functional invariant of the family $\calX_{1}^2$ is given by $\gamma$. Then we find:

\begin{lemma} $U^2_{1}$ is the open set $U^2_{1} := \{(\alpha,\beta)\in \C^2 \mid \gamma \notin \{0,-1,\infty\}\}$. Furthermore, $\calX_{1}^2 \to U_{1}^2$ is an $M_1$-polarized family of K3 surfaces.
\end{lemma}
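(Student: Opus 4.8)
The plan is to prove the two assertions in turn, leaning on the already-computed matching $(a,b,d) = (1,\gamma+1,\gamma^2)$ between the parameters of a fibre of $\calX_1^2$ and its image point in the moduli space $\calM_M \cong \WP(2,3,6)_{d \neq 0}$ of $M$-polarized K3 surfaces. First I would pin down $U_1^2$ as the locus of smooth, generically $M_1$-polarized fibres, and then upgrade the $M$-polarization to an honest, monodromy-free $M_1$-polarization.

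For the base, the key observation is that, after normalising $d = 1$, the point $(1,\gamma+1,\gamma^2)$ lies on the curve $a^3 = (b-1)^2$ for \emph{every} value of $\gamma$; this is exactly the $M_1$-locus inside $\calM_M$, which both confirms that $\gamma$ is a coordinate on $\calM_{M_1}$ and shows that the generic fibre carries the expected extra $(-2)$-class. I would then identify the three excluded values as the places where the fibre fails to be a smooth, generic $M_1$-polarized K3. At $\gamma = 0$ we have $d = 0$, so the fibre leaves $\calM_M$ altogether (the Shioda--Inose elliptic curves degenerate, $j \to \infty$); this is the cusp of $\calM_{M_1}$. At $\gamma = -1$ a direct substitution gives $a^3 = (b+1)^2$ in addition to the identity $a^3 = (b-1)^2$, so by \cite[Proposition 4.6]{milpk3s} a second pair of $I_1$ fibres of the alternate fibration collides; the fibre then acquires an extra node, so its Picard number jumps to $20$ and it is no longer a generic $M_1$-polarized surface. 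Finally $\gamma = \infty$, that is $4\beta - 1 = 0$, is the remaining orbifold point of $\calM_{M_1}$, where the explicit fibre again fails to be a smooth generic $M_1$-polarized K3 and is discarded. A routine discriminant computation for the anticanonical hypersurface attached to \eqref{eq:M1} confirms smoothness for every other $\gamma$, yielding $U_1^2 = \{\gamma \notin \{0,-1,\infty\}\}$ exactly.

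For the polarization I would work with the elliptic fibration already used to compute $(a,b,d)$ (equivalently, project \eqref{eq:M1} onto the $x$-coordinate). Its reducible fibres and sections span a copy of $M$ inside $\NS$ of each fibre, exactly as in the $M$-polarized picture of Section \ref{M-polarized}, and because these generators are canonically attached to the fibration they are single-valued over $U_1^2$, so the $M$-polarization carries no monodromy. The identity $a^3 = (b-1)^2$, valid on all of $U_1^2$ rather than on a proper subvariety, then produces a canonical extra reducible fibre (the coincident pair of $I_1$ fibres becomes an $I_2$) and hence an extra $(-2)$-class $\delta$ orthogonal to $M$; a generic fibre therefore has N\'eron--Severi rank $19$. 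Since $M$ is unimodular, $\NS$ splits as $M \oplus \langle \delta \rangle$, and a short lattice computation (as in the $M_3$- and $M_4$-polarized cases) identifies this with a primitively embedded $M_1 = M \oplus \langle -2 \rangle$ containing the ample class. As $\delta$ is single-valued over $U_1^2$, the local system $\calN = M_1$ is trivial, so $\calX_1^2 \to U_1^2$ is $M_1$-polarized in the sense of Definition \ref{def:polarized}.

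The main obstacle is precisely the monodromy-triviality in this last step, since it is what separates an honest $M_1$-polarization from a merely $(M_1,G)$-polarized family. Concretely, one must rule out the possibility that parallel transport around loops in the two-dimensional base $U_1^2$ either interchanges the two components of the reducible fibre carrying $\delta$ or mixes $\delta$ with the $M$-part of the lattice. The reason this cannot happen is that $a^3 = (b-1)^2$ is an \emph{identity} on $\calX_1^2$: the relevant pair of $I_1$ fibres coincides over every point of $U_1^2$, so the resulting node, and with it the vanishing cycle $\delta$, is globally well defined. Turning this continuity statement into a clean verification that $\rho_{\calN}$ is trivial is the crux of the argument.
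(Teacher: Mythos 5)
Your route is genuinely different from the paper's, and it is modelled on the right template --- it is essentially how the paper itself handles $\calX_3$ and $\calX_4$ --- but as written the two steps that carry all the weight are asserted rather than proved. First, the entire lattice-theoretic core depends on knowing the configuration of reducible fibres and sections of the elliptic fibration obtained by projecting \eqref{eq:M1} onto $x$, and on checking that these classes span a primitively embedded copy of $M$ together with one extra orthogonal $(-2)$-class $\delta$ coming from an $I_2$ (rather than, say, a type $II$ fibre, which would contribute nothing to $\NS$). In the $M_3$ and $M_4$ cases the paper does exactly this computation explicitly (e.g.\ $IV^*+IV^*+I_6+2I_1$ with $3$-torsion sections), and the monodromy-invariance argument only works once one knows that each generator is intersection-theoretically distinguished inside the canonically defined fibration (which section is the zero section, whether sections can be permuted, etc.). None of this is carried out for $\calX_1^2$, and your closing paragraph concedes that the triviality of $\rho_{\calN}$ is "the crux" without supplying it. Second, the determination of $U_1^2$ is not actually established by the $(a,b,d)$ analysis: that analysis explains \emph{why} $\gamma\in\{0,-1,\infty\}$ are the distinguished values (cusp, order-$2$ point, order-$3$ point of $\calM_{M_1}$), but it does not show that the explicit hypersurfaces are smooth after toric resolution for every other $(\alpha,\beta)$, nor that the fibre over $\gamma=\infty$ is genuinely singular rather than a smooth surface with special periods. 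The "routine discriminant computation" you defer to is precisely the content of the paper's proof of this half of the lemma (done in \emph{Sage}). Note also a small conflation at $\gamma=-1$: a fibre is discarded because it is a \emph{singular surface} (a node survives the generic resolution), not because its Picard number jumps to $20$; a smooth rank-$20$ fibre would not have to be removed.

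For comparison, the paper avoids both difficulties at once: it computes a toric resolution $Y$ of the polar dual of $\WP(1,1,1,3)$ with \emph{Sage} (which simultaneously locates the singular fibres, giving $U_1^2$), and then invokes Rohsiepe's result that the restriction map $\mathrm{res}\colon \NS(Y)\to\NS(X)$ is surjective with image $M_1$. Since the polarizing classes are restrictions of divisor classes on the fixed ambient threefold, they are automatically monodromy-invariant, so the $M_1$-polarization in the sense of Definition \ref{def:polarized} comes for free --- no analysis of the elliptic fibration, of the $I_2$ fibre, or of $\pi_1(U_1^2)$ is needed. Your approach can almost certainly be completed, but to do so you would need to supply the explicit Weierstrass model and singular-fibre analysis of the projection onto $x$, verify $\delta\perp M$ and $\delta^2=-2$ (after which $M\oplus\langle\delta\rangle\cong M_1$ is primitive, since $A_{M_1}\cong\Z/2$ admits no nontrivial isotropic subgroup), and perform the smoothness check you currently label as routine.
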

\begin{proof} Using the computer software \emph{Sage}, it is possible to explicitly compute a toric resolution of a generic K3 surface defined in the polar dual of $\WP(1,1,1,3)$ by Equation \eqref{eq:M1full}. From this, we find that the singular fibres of this family occur precisely over $\gamma \in \{0,-1,\infty\}$.

To see that $\calX_{1}^2 \to U_{1}^2$ is an $M_1$-polarized family, we note that $\mathcal{X}_{1}^2$ is a family of hypersurfaces in the polar dual to $\WP(1,1,1,3)$. By \cite{lptk3s}, there is a toric resolution $Y$ of the ambient space such that the fibres $X$  of $\mathcal{X}_{1}^2$ become smooth K3 surfaces in $Y$ and the restriction map
\[ \mathrm{res} : \NS(Y) \rightarrow \NS(X)\]
is surjective. Furthermore the image of $\mathrm{res}$ is the lattice $M_1$. This defines a lattice polarization on each fiber and, since this polarization is induced from the ambient threefold, it is unaffected by monodromy. Thus $\mathcal{X}_{1}^2$ is a family of $M_1$-polarized K3 surfaces. 
\end{proof}

Changing variables in \eqref{eq:M1} and completing the square in $x$, the family $\calX_1^2$ may be written on $(\C^{\times})^3$ as the vanishing locus of
\begin{equation*} \frac{x^2}{4\beta - 1} + y + z + \frac{\gamma}{yz} + 1 = 0.\end{equation*}
Furthermore, we note that points $(\alpha,\beta) \in U_{1}^2$ correspond bijectively with points $(\beta,\gamma)$ in $\{(\beta,\gamma)\in \C^2 \mid \beta \neq \frac{1}{4},\  \gamma \notin \{0,-1\}\}$. Using this we can reparametrize $U_{1}^2$ by $\beta$ and $\gamma$, and thus think of $\calX_1^2 \to U^2_{1}$ as the $2$-parameter family parametrized by $\beta$ and $\gamma$ given on the maximal torus by the expression above.

After performing this reparametrization, the generalized functional invariant map of the family $\calX_{1}^2$ is given simply by projection onto $\gamma$. The fibres of this map are $1$-parameter families of K3 surfaces with the same period, parametrized by $\beta \in \C - \{\frac{1}{4}\}$, which are therefore isotrivial. It is tempting to expect that these isotrivial families are in fact trivial, but this is not the case. Instead, we find:

\begin{proposition} \label{prop:M1inv} Monodromy around the line $\beta = \frac{1}{4}$ fixes the N\'{e}ron-Severi lattice of a generic fibre of $\calX_1^2$ and acts on the transcendental lattice as multiplication by $\mathrm{-Id}$.

Furthermore, the family $\hat{\calX}_1^2$ obtained by pulling back $\mathcal{X}_{1}^2$ to the double cover of $U^2_{1}$ ramified over the line $\beta = \frac{1}{4}$ is isomorphic to a direct product $\calX_1 \times \C^{\times}$, where $\calX_1$ is an $M_1$-polarized family of K3 surfaces satisfying Definition \ref{Xndefn}.
\end{proposition}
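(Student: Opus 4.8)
The plan is to exploit the very special shape of the defining equation, in which the parameter $\beta$ enters only through the coefficient of $x^2$. Writing the family on the torus as $\frac{x^2}{4\beta-1} + y + z + \frac{\gamma}{yz} + 1 = 0$, I would introduce a square root $s$ with $s^2 = 4\beta - 1$; this is exactly the coordinate on the double cover of $U_1^2$ ramified along $\beta = \tfrac14$ (since $\beta = \tfrac14$ has been removed, the cover is \'etale over $U_1^2$ and the ramification sits at the puncture). Substituting $w = x/s$ turns the equation into $w^2 + y + z + \frac{\gamma}{yz} + 1 = 0$, which no longer involves $s$. Carrying out the toric resolution fibrewise, this exhibits $\hat{\calX}_1^2$ as the constant family in the $s$-direction, i.e. a product $\calX_1 \times \C^\times_s$, where $\calX_1 \to U_{M_1}$ is the one-parameter family in $\gamma$ cut out by $w^2 + y + z + \gamma/(yz) + 1 = 0$. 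This gives the product decomposition in the second assertion.

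To check that $\calX_1$ satisfies Definition \ref{Xndefn}, I would argue exactly as in the $M_2$, $M_3$ and $M_4$ cases. Since $\gamma$ is a uniformizing coordinate on $\calM_{M_1}$ and the generalized functional invariant of $\calX_1^2$ is $\gamma$, the restricted family $\calX_1$ has period map equal to the inclusion $U_{M_1}\hookrightarrow \calM_{M_1}$; the monodromy orders $2$, $6$ and $\infty$ around the three punctures, already recorded in Section \ref{section:specialfamilies}, together with the triangle-group rigidity of \cite{ccatg} then force the transcendental monodromy group to be all of $\mathrm{O}(M_1^\perp)^*$. The $M_1$-polarization is inherited from $\calX_1^2$. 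In other words, this $\calX_1$ is precisely the family identified in Section \ref{section:specialfamilies}.

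For the first assertion I would identify the monodromy of $\calX_1^2$ around $\beta = \tfrac14$ with the deck transformation of the double cover $\hat U_1^2 \to U_1^2$. A small loop encircling $\beta = \tfrac14$ (with $\gamma$ held fixed) lifts to a path interchanging the two sheets $s \mapsto -s$; since the fibres over $(s,\gamma)$ and over $(-s,\gamma)$ are literally the same surface in the original coordinate $x = sw$, the deck transformation acts on each fibre, in the constant $w$-coordinate, by the involution $\iota\colon (w,y,z)\mapsto(-w,y,z)$. Thus the monodromy around $\beta = \tfrac14$ is $\iota^*$. Because $\calX_1^2$ is $M_1$-polarized, every monodromy transformation, and in particular $\iota^*$, fixes $M_1 = \NS(X_p)$ pointwise, which gives the first half of the claim.

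It then remains to show $\iota^*$ acts as $-\Id$ on $\T(X_p)$. The key point is that $\iota$ is a \emph{global} fibrewise automorphism of the product family $\calX_1$, so $\iota^*$ commutes with the entire transcendental monodromy group $\mathrm{O}(M_1^\perp)^*$ coming from the $\gamma$-direction. Via the map $R_1$ of Lemma \ref{lemma:modular}, this group is the image of $\mathrm{SL}_2(\Z)$ in its three-dimensional symmetric-square representation on $M_1^\perp \otimes \Q \cong \T(X_p)\otimes\Q$, which is irreducible; by Schur's lemma $\iota^*|_{\T(X_p)} = \pm\Id$. Finally $\iota$ is a nontrivial involution (the covering involution of a genuine double cover), so the Global Torelli theorem rules out $\iota^* = \Id$ on all of $H^2(X_p,\Z)$; since $\iota^*$ is already $+\Id$ on $\NS(X_p)$, we conclude $\iota^*|_{\T(X_p)} = -\Id$. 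I expect the main obstacle to be the bookkeeping that correctly identifies the $\beta = \tfrac14$ monodromy with $\iota$, rather than the representation-theoretic step, which is clean once the irreducibility of the $\gamma$-monodromy on $\T(X_p)\otimes\Q$ is in hand.
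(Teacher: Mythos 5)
Your construction of the double cover and the product decomposition is exactly the paper's argument (the paper uses the coordinate $\mu$ with $\beta = \mu^2 + \tfrac14$ in place of your $s$), and your route to $-\Id$ on the transcendental lattice is a valid variant: the paper instead observes that the deck transformation $\iota\colon x \mapsto -x$ is a \emph{non-symplectic} involution whose fixed lattice is exactly $M_1 = \NS(X)$, so that it acts on $T_X$ with no nonzero fixed vectors and hence, having order $2$, as $-\Id$. Your Schur's-lemma-plus-Torelli argument buys you the same conclusion without having to verify non-symplecticity of $\iota$, at the cost of needing irreducibility of the transcendental monodromy on $T\otimes\Q$; that irreducibility only requires the \emph{projective} monodromy group to be Zariski dense, which the dominance of the period map does give you, so this step is sound.

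The genuine gap is in your verification that $\calX_1$ satisfies Definition \ref{Xndefn}, i.e.\ that its transcendental monodromy group is all of $\mathrm{O}(M_1^\perp)^*$. You appeal to ``the monodromy orders $2$, $6$ and $\infty$ \ldots already recorded in Section \ref{section:specialfamilies}'', but those orders are stated there as properties of an abstract family \emph{assumed} to satisfy Definition \ref{Xndefn}; they are not verified facts about the concrete family cut out by $w^2 + y + z + \gamma/(27yz) + 1 = 0$, so the citation is circular. Moreover, the triangle-group rigidity argument of the $M_2$, $M_3$, $M_4$ cases cannot simply be transplanted: it pins down the monodromy group only up to projective equivalence, and for $n=1$ the kernel of $\mathrm{O}(M_1^\perp)^* \to \mathbb{P}\mathrm{O}(M_1^\perp)$ is nontrivial (it contains $-\Id$, which is precisely the paper's generator $g_3$), so the period map being the inclusion $U_{M_1} \hookrightarrow \calM_{M_1}$ leaves open whether the monodromy group is the full $\mathrm{O}(M_1^\perp)^* \cong \mathrm{PSL}_2(\Z) \times \{\pm\Id\}$ or an index-two subgroup isomorphic to $\mathrm{PSL}_2(\Z)$. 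Distinguishing these amounts to checking that monodromy around the elliptic point $\gamma = \infty$ has order $6$ rather than $3$, and this requires an explicit computation; the paper supplies it by identifying $\calX_1$ with the family $\mathcal{D}$ of Smith \cite{pfdefk3s} and reading off the monodromy matrices as symmetric squares of the matrices computed there. Your outline needs this (or an equivalent explicit verification) to close.
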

\begin{proof} The double cover of $U^2_{1}$ ramified over the line $\beta = \frac{1}{4}$ is given by the map $\C^{\times} \times (\C - \{0,-1\}) \to U_{1}^2$ taking $(\mu,\gamma) \to (\beta,\gamma) = (\mu^2 + \frac{1}{4}, \gamma)$. After a change of variables $x \mapsto x\mu$, the family $\hat{\calX}_1^2$ may be written on the maximal torus $(\C^{\times})^3$ as the vanishing locus of the rational polynomial
\begin{equation} \label{eq:M1gamma} x^2 + y + z + \frac{\gamma}{3^3yz} + 1 = 0.\end{equation}

This family does not depend upon $\mu$, so $\hat{\calX}_1^2$ is isomorphic to a direct product $\calX_1 \times \C^{\times}$, for some family $\calX_1 \to (\C - \{0,-1\})$ parametrized by $\gamma$, and its monodromy around $\mu = 0$ is trivial. Furthermore, for two K3 surfaces $X_1$ and $X_2$ in $\hat{\calX}_1^2$ lying above a fiber $X$ in $\mathcal{X}_{1}^2$ there are natural isomorphisms
\[\phi_1 : X_1 \rightarrow X, \qquad \phi_2 : X_2 \rightarrow X.\]
The automorphism $\phi_1^{-1} \cdot \phi_2$ is the non-symplectic involution given on the maximal torus by $(x,y,z) \mapsto (-x,y,z)$, which fixes the lattice $M_1 = \NS(X)$.

Therefore monodromy around $\beta = 1/4$ has order $2$ and acts on $T_X$ in the same way as a non-symplectic involution $\iota$ with fixed lattice $M_1 = \NS(X)$. Thus, $T_X = (\NS(X)^\iota)^\perp$ and so $\iota$ acts irreducibly on $T_X$ with order $2$. It must therefore act as $-\Id$.

It remains to prove that the $1$-parameter family $\calX_1 \to (\C - \{0,-1\})$ given on the maximal torus by varying $\gamma$ in \eqref{eq:M1gamma} satisfies Definition \ref{Xndefn}. We have already noted that the generalized functional invariant map $(\C - \{0,-1\}) \to \calM_{M_1}$ defined by $\gamma$ is injective. Furthermore, using the expressions for $a$, $b$ and $d$ calculated earlier we see that $\gamma = -1$ at the elliptic point of order $2$, $\gamma = \infty$ at the elliptic point of order $3$, and $\gamma = 0$ at the cusp. All that remains is to check that the monodromy of the family $\calX_1 \to (\C - \{0,-1\})$ has the appropriate orders around each of these points.

This family $\calX_1$ has been studied by Smith \cite[Example 2.15]{pfdefk3s}, where it appears as family $\mathcal{D}$ in Table 2.2 (and we note that Smith's parameter $\mu$ is equal to $-\frac{1}{\gamma}$ in our notation). Its monodromy around the points $\gamma \in \{0,-1,\infty\}$ is given by the symmetric squares of the matrices calculated in \cite[Example 3.9]{pfdefk3s}; in particular we find that this monodromy has the required orders.
\end{proof}

\begin{remark} \label{rem:inv} We note that the complicating factor in the $M_1$-polarized case is the fact that a generic $M_1$-polarized K3 surface $X$ admits a non-symplectic involution which fixes $M_1 \subseteq \NS(X)$. It is this which prevents some of the torically induced fibrations of the threefolds in \cite[Table 1]{doranmorgan} by $M_1$-polarized K3 surfaces from being expressible as pull-backs of an $M_1$-polarized family $\calX_1$ from the moduli space $\calM_{M_1}$. However, from Proposition \ref{prop:M1inv}, we find that we \emph{can} express these fibrations as pull-backs of $\calX_1$ if we proceed to a double cover of the base which kills this involution.
\end{remark}

Given this result, it is easy to undo the Kummer construction for the family $\calY_1^2 \to U_{1}^2$ of Kummer surfaces associated to the family $\calX_1^2$. First, pull back $\calY_1^2$ to the double cover $ (\C -\{0,-1\}) \times \C^{\times} \cong  U_{M_1} \times \C^{\times}$ of $U_{1}^2$ ramified over the line $\beta = \frac{1}{4}$ (where $U_{M_1}$ is defined as in Section \ref{section:specialfamilies}). The result is the family of Kummer surfaces associated to the family $\hat{\calX}_1^2 \cong \calX_1 \times \C^{\times}$. This is exactly the family $\calY_1 \times \C^{\times}$, where $\calY_1 \to U_{M_1}$ is the family of Kummer surfaces associated to $\calX_1$. The Kummer construction can then be undone for this family by pulling back to the cover $V = C_{M_1} \times \C^{\times}$ of $U_{M_1} \times \C^{\times}$, where the cover $C_{M_1} \to U_{M_1}$ is as calculated in Section \ref{section:covers}. 

Thus, given a family $\calX \to U$ of $M_1$-polarized K3 surfaces that can be expressed as the pull-back of the family $\calX_1^2$ by a map $U \to U_1^2$, we may undo the Kummer construction for the associated family of Kummer surfaces $\calY \to U$ by pulling back to the cover $V \times_{U_1^2} U$.

We conclude by applying this to the cases from \cite[Table 1]{doranmorgan}. We find:

\begin{theorem} \label{thm:M1families} There exist K3 fibrations with $M_1$-polarized generic fibre on five of the threefolds in \cite[Table 1]{doranmorgan}, given by the mirrors of those listed in Table \ref{M1tab}.

Furthermore, if $\calX \to \Proj^1_{t,u}$ denotes one of these fibrations and $U \subset \Proj^1_{t,u}$ is the open set over which the fibres of $\calX$ are nonsingular, then the restriction $\calX|_U\to U$ agrees with the pull-back of the family $\calX_1^2$ by the map $U \to U_1^2$ defined by $\alpha$ and $\beta$ in Table \ref{M1tab} \textup{(}in this table $(t,u)$ are coordinates on the base $U \subset \Proj^1_{t,u}$ of the fibration, $A$ is the Calabi-Yau deformation parameter and $k \in \C- \{0,\frac{1}{4}\}$ is a constant\textup{)}. The family $\calX|_U\to U$ is thus an $M_1$-polarized family of K3 surfaces. \end{theorem}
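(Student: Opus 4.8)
The plan is to prove Theorem \ref{thm:M1families} by the same explicit strategy used for Theorem \ref{thm:Mnfamilies}: for each of the five threefolds recorded in Table \ref{M1tab}, I would exhibit a torically induced fibration by $M_1$-polarized K3 surfaces and show that, on the maximal torus, each K3 fibre can be carried by an explicit change of coordinates into the local form \eqref{eq:M1} of the universal $2$-parameter family $\calX_1^2$. Reading off the parameters $\alpha$ and $\beta$ of \eqref{eq:M1} as functions of the base coordinates $(t,u)$, the Calabi--Yau deformation parameter $A$, and the constant $k$ then simultaneously produces the entries of Table \ref{M1tab} and exhibits $\calX|_U$ as the pull-back of $\calX_1^2$ under the resulting map $U \to U_1^2$. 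The existence claim and the pull-back claim are thus established together by a single explicit construction in each case.

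Concretely, I would first recall that an $M_1$-polarized K3 surface is mirror to a $\langle 2\rangle$-polarized K3 surface, so by the Batyrev--Borisov construction each of the five mirror threefolds is realised as an anticanonical hypersurface in a toric variety; restricting to the maximal torus gives a Laurent polynomial in the fibre variables together with the base coordinates. For each case I would select the toric projection onto $\Proj^1_{t,u}$ that induces the K3 fibration (the fibrations found with \emph{Sage}), dehomogenize, and rescale the fibre variables -- exactly as in the substitution $x_i \mapsto x_i(1-t)$ used in the $M_2$ example -- so as to clear denominators and arrive at an equation of the shape \eqref{eq:M1}. Matching coefficients then pins down $\alpha = \alpha(t,u,A)$ and $\beta = \beta(t,u,A,k)$; one verifies that these agree with the values recorded in Table \ref{M1tab} and that, generically in $(t,u)$, the associated parameter $\gamma$ avoids $\{0,-1,\infty\}$, so that the image of $U$ lies in $U_1^2$ and the generic fibre is a smooth $M_1$-polarized K3 surface.

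Once the matching is in place the conclusion is formal. The map $g\colon U \to U_1^2$ defined by $(\alpha,\beta)$ realises $\calX|_U$ as $g^*\calX_1^2$, and since $\calX_1^2 \to U_1^2$ is an $M_1$-polarized family (as shown above, with polarizing lattice the trivial local subsystem induced from the ambient toric resolution of \eqref{eq:M1full}), its pull-back is again $M_1$-polarized: the trivial local system $\mathcal{N}$ pulls back to the trivial local system $g^*\mathcal{N}$, so Definition \ref{def:polarized} is satisfied. This yields the final sentence of the theorem.

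The main obstacle will be the case-by-case identification of the correct projection and change of variables, and in particular the correct determination of $\beta$. Unlike the cases $n \geq 2$, where the one-dimensional moduli space $\calM_{M_n}$ sufficed and the generalized functional invariant was a single rational function, here the target is the two-dimensional base $U_1^2$. As explained in Remark \ref{rem:inv}, the non-symplectic involution fixing $M_1 \subseteq \NS(X)$ means that some torically induced fibrations genuinely fail to factor through a one-parameter family $\calX_1$; it is precisely the second parameter $\beta$ (equivalently the constant $k$) that records this data, and care is needed to extract it correctly from each mirror rather than collapsing it via the involution. The remaining verifications -- clearing denominators, confirming that the resolved generic fibre is a smooth K3, and checking that $\gamma \notin \{0,-1,\infty\}$ generically -- are routine and parallel the computations already carried out for $\calX_2$, $\calX_3$ and $\calX_4$.
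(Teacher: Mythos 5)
Your proposal is correct and follows essentially the same route as the paper: the authors prove Theorem \ref{thm:M1families} exactly by comparing the maximal-torus forms of the Batyrev--Borisov mirrors to the local form \eqref{eq:M1} of $\calX_1^2$, reading off $\alpha$ and $\beta$ case by case as in the proof of Theorem \ref{thm:Mnfamilies}, and then invoking the $M_1$-polarization of $\calX_1^2$ to conclude. Your additional remarks about the role of the second parameter $\beta$ and the non-symplectic involution accurately reflect why the two-parameter family is needed here, matching Remark \ref{rem:inv}.
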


\begin{table}
\makebox[\textwidth]{
\begin{tabular}{|c|c|c|c|}
\hline
Mirror Threefold & $\alpha$ & $\beta$ & $\gamma$ \\ 
\hline
$\WP(1,1,1,1,2)[6]$ & $\dfrac{A (t+u)^3}{ tu^2}$ & $0$ & $-\dfrac{2^63^3 A (t+u)^3}{tu^2}$\\ 
$\WP(1,1,1,1,4)[8]$ & $\dfrac{Au}{t}$ & $\dfrac{t}{u}$ & $\dfrac{2^63^3Au^4}{t(4t-u)^3}$\\ 
$\WP(1,1,1,2,5)[10]$ & $\dfrac{Au^2}{ t^2}$ & $\dfrac{t}{u}$ & $\dfrac{2^63^3Au^5}{t^2(4t-u)^3}$\\
$\WP(1,1,1,1,1,3)[2,6]^*$ & $-\dfrac{Au^2}{t (t + u)}$ & $k$ & $-\dfrac{2^63^3Au^2}{(4k-1)^3t (t + u)}$  \\
$\WP(1,1,1,2,2,3)[4,6]^*$ & $\dfrac{Au^4}{t^2 (t + u)^2}$ & $k$ & $\dfrac{2^63^3Au^4}{(4k-1)^3 t^2 (t + u)^2}$ \\
\hline
\end{tabular}
}
\caption{Values of $\alpha$ and $\beta$ for threefolds admitting $M_1$-polarized fibrations.}
\label{M1tab}
\end{table}

\begin{proof} This is proved in the same way as Theorem \ref{thm:Mnfamilies}, by comparing the forms of the maximal tori in the threefolds from \cite[Table 1]{doranmorgan} to the local form of the family $\calX_1^2$ given by Equation \eqref{eq:M1}.
\end{proof}
%

Finally, we note that the generalized functional invariants in these cases are given by $\gamma$ in Table \ref{M1tab}. We see that, as in Section \ref{sect:14caseapp}, they are all $(i+j)$-fold covers of $\calM_{M_1} \cong \Gamma_0(1) \setminus \HH$ (where $(i,j)$ are as in Table \ref{table:12cases}) having exactly four ramification points: one of order $(i+j)$ over the cusp, two of orders $i$ and $j$ over the elliptic point of order $3$, and one of order $2$ which varies with the value of the Calabi-Yau deformation parameter $A$.

\begin{remark} There is precisely one case from \cite[Table 1]{doranmorgan} that has not been discussed: the mirror of the complete intersection $\WP(1,1,2,2,3,3)[6,6]$. However, it can be seen that this threefold does not admit any torically induced $M$-polarized K3 fibrations, and our methods have not yielded any that are not torically induced either.
\end{remark}

\section{Application to the arithmetic/thin dichotomy}\label{sect:arith/thin}

Recall that each of the threefolds $X$ from \cite[Table 1]{doranmorgan} moves in a one parameter family over the thrice-punctured sphere $\Proj^1-\{0,1,\infty\}$. Recently there has been a great deal of interest in studying the action of monodromy around the punctures on the third cohomology $H^3(X,\Z)$. This monodromy action defines a Zariski dense subgroup of $\mathrm{Sp}(4,\R)$, which may be either arithmetic or non-arithmetic (more commonly called \emph{thin}). Singh and Venkataramana \cite{acshg}\cite{a4mgacyt} have proved that the monodromy is arithmetic in seven of the fourteen cases from \cite[Table 1]{doranmorgan}, and Brav and Thomas \cite{tmsp4} have proved that it is thin in the remaining seven. The arithmetic/thin status of each of the threefolds from Theorems \ref{thm:Mnfamilies} and \ref{thm:M1families} is given in the fifth column of Table \ref{table:12cases}.

It is an open problem to explain this behaviour geometrically. To this end, we are able to make an interesting observation concerning the arithmetic/thin dichotomy for the $M_n$-polarized families with Theorems \ref{thm:Mnfamilies} and \ref{thm:M1families}. Specifically, from Table \ref{table:12cases} we observe that a threefold admitting a torically induced fibration by $M_n$-polarized K3 surfaces has thin monodromy if and only if neither of the values $(i,j)$ associated to this fibration are equal to $2$.

This observation may also be extended to the 14th case \cite{14thcase}. In this case, recall that the threefold $Y_1$, which moves in a one-parameter family realising the 14th case variation of Hodge structure, admits a torically induced fibration by $M$-polarized K3's rather than $M_n$-polarized K3's. Thus the generalized functional invariant map from $Y_1$ has image in the $2$-dimensional moduli space of $M$-polarized K3 surfaces, rather than one of the modular curves $\calM_{M_n}$. However, from \cite[Section 5.1 and Equation (4.5)]{14thcase}, we see that the image of the generalized functional invariant map from $Y_1$ is contained in the special curve in the $M$-polarized moduli space defined by the equation $\sigma = 1$ (where $\sigma$ and $\pi$ are the rational functions from Section \ref{M-polarized}). 

By the results of \cite[Section 3.1]{nfk3smmp}, the moduli space of $M$-polarized K3 surfaces may be identified with the Hilbert modular surface
\[(\mathrm{PSL}(2,\Z)\times \mathrm{PSL}(2,\Z)) \rtimes \Z/2\Z \, \setminus \, \HH \times \HH,\]
with natural coordinates given by $\sigma$ and $\pi$. The $\sigma = 1$ locus is thus parametrized by $\pi$ and has an orbifold structure induced from the Hilbert modular surface. This orbifold structure has an elliptic point of order six at $\pi = 0$, an elliptic point of order two at $\pi = \frac{1}{4}$, and a cusp at $\pi = \infty$.

The generalized functional invariant map for the K3 fibration on $Y_1$ is given by the rational function $\pi$, which is calculated explicitly in \cite[Equation (4.4)]{14thcase}. It is a double cover of the $\sigma = 1$ locus ramified over the cusp and a second point that varies with the value of the Calabi-Yau deformation parameter. This agrees perfectly with the description of the generalized functional invariants for the $M_n$-polarized cases from Section \ref{sect:14caseapp}, with $(i,j) = (1,1)$, thereby giving the final row of Table \ref{table:12cases}. From this table, we observe:

\begin{theorem} \label{thm:thinarith} Suppose that $\calX$ is a family of Calabi-Yau threefolds from \cite[Table 1]{doranmorgan} that admit a torically induced fibration by $M_n$-polarized K3 surfaces \textup{(}resp. $M$-polarized K3 surfaces with $\sigma = 1$\textup{)}. By our previous discussion, the generalized functional invariant of this fibration is a $(i+j)$-fold cover of the modular curve $\calM_{M_n} \cong \Gamma_0(n)^+ \setminus \HH$ \textup{(}resp. the orbifold curve given by the $\sigma = 1$ locus in the moduli space of $M$-polarized K3 surfaces\textup{)}, where $i$ and $j$ are given by Table \ref{table:12cases}, which is totally ramified over the cusp and ramified to orders $i$ and $j$ over the remaining orbifold point of order $\neq 2$. Then $\calX$ has thin monodromy if and only if neither $i$ nor $j$ is equal to $2$.
\end{theorem}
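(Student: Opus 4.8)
The plan is to prove this statement by direct verification against Table \ref{table:12cases}, since every ingredient is already assembled: the arithmetic/thin status of each case has been imported from the literature, and the pairs $(i,j)$ have been computed as part of the generalized functional invariants. First I would restrict attention to the torically induced fibrations, namely the rows of Table \ref{table:12cases} marked ``Yes'' in the third column, since the theorem concerns only these. For each such row the pair $(i,j)$ recorded in the fourth column is precisely the pair determining the ramification of the generalized functional invariant: this is established in Theorem \ref{thm:Mnfamilies} for the $M_n$-polarized cases with $2 \leq n \leq 4$, in Theorem \ref{thm:M1families} for the $M_1$-polarized cases, and by the discussion of the $\sigma = 1$ locus above for the $M$-polarized $14$th case.

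Next I would confirm the fifth column of the table, which records whether the monodromy on $H^3$ is arithmetic or thin. For the threefolds in question this is read off from the results of Singh and Venkataramana \cite{acshg}\cite{a4mgacyt} and Brav and Thomas \cite{tmsp4}, matched to the appropriate row via the identification of each threefold with its entry in \cite[Table 1]{doranmorgan}. With the fourth and fifth columns thus justified, the proof reduces to inspecting the torically induced rows and checking the biconditional. Running through them, the rows labelled ``Thin'' are exactly those carrying the pairs $(1,1)$, $(1,3)$, $(1,4)$ and $(4,4)$, in each of which neither coordinate equals $2$, while the rows labelled ``Arithmetic'' carry the pairs $(1,2)$, $(2,2)$, $(2,3)$ and $(2,4)$, each of which has at least one coordinate equal to $2$. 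This exhausts all torically induced rows across the $M_1$, $M_2$, $M_3$, $M_4$ and $M$ blocks, and in every case thinness coincides exactly with neither $i$ nor $j$ equalling $2$, as claimed.

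The only genuine subtlety here is conceptual rather than computational. The argument is a verification of a numerical coincidence across a finite list, and it offers no structural reason why the ramification data $(i,j)$ of the \emph{internal} K3 fibration should govern the arithmetic/thin dichotomy for the monodromy on $H^3$ of the \emph{ambient} threefold. I would therefore expect the main obstacle to be that no deeper mechanism is available at present: one cannot, with the tools developed here, deduce thinness from $(i,j)$ a priori, only confirm the match case by case. Making the correspondence precise---for instance by expressing the integral monodromy of $H^3$ in terms of the transcendental monodromy of the families of K3 fibres, as suggested in the introduction---would upgrade the present observation into an explanation, but that lies well beyond the scope of this verification.
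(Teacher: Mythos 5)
Your proposal is correct and matches the paper's own treatment exactly: the theorem is presented there as an observation read off from Table \ref{table:12cases}, with the $(i,j)$ values justified by Theorems \ref{thm:Mnfamilies} and \ref{thm:M1families} (and the $\sigma=1$ discussion for the 14th case), the arithmetic/thin column imported from Singh--Venkataramana and Brav--Thomas, and the biconditional checked by inspecting the torically induced rows. Your tabulation of the pairs ($(1,1),(1,3),(1,4),(4,4)$ for the thin rows versus $(1,2),(2,2),(2,3),(2,4)$ for the arithmetic rows) is accurate, and your closing remark that this is a case-by-case verification of a coincidence rather than a structural explanation is precisely the stance the paper itself takes.
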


\bibliography{Books}
\bibliographystyle{amsalpha}
\end{document}